\newtheorem{prop}{\bf Proposition}[section]
\newtheorem{thm}[prop]{\bf Theorem}
\newtheorem{lem}[prop]{\bf Lemma}
\theoremstyle{definition}
\newtheorem{defi}[prop]{Definition}
\newtheorem{rmk}[prop]{Remark}
\newtheorem{condi}[prop]{Condition}
\newcommand{\map}{\rightarrow}
\newcommand{\BP}{\mathbb{P}}
\newcommand{\QD}{\mathrm{QD}}
\newcommand{\Sing}{\mathrm{Sing}}
\newcommand{\PMF}{\mathcal{PMF}}
\newcommand{\bT}{\bar{\mathcal{T}}}
\newcommand{\bC}{\bar{\mathcal{C}}}
\title{Fibered commensurability and arithmeticity of random mapping tori}
\author{Hidetoshi Masai}
\date{}
\begin{document}
\maketitle
\begin{abstract}
We consider a random walk on the mapping class group of a surface of finite type.
We assume that the random walk is determined by a probability measure whose
support is finite and generates a non-elementary subgroup $H$.
We further assume that $H$ is not consisting only of lifts with respect to any one covering.
Then we prove that the probability that such a random walk gives a non-minimal mapping class in its fibered commensurability class
decays exponentially.
As an application of the minimality, we prove that for the case where a surface has at least one puncture,
the probability that
a random walk gives mapping classes with arithmetic mapping tori decays exponentially.
We also prove that a random walk gives rise to asymmetric mapping tori with exponentially high probability for closed case.

\smallskip
\noindent \textbf{Mathematics Subject Classification (2010).} 20F65, 60G50, 57M50.
\noindent \textbf{Keywords.} Random walk, Mapping class group, Fibered commensurability, Arithmetic $3$-manifold.
\end{abstract}

\section{Introduction} 
Let $S$ be an orientable surface of finite type $(g,n)$, where $g$ is the genus and $n$ is the number of punctures.
We consider a random walk on the mapping class group $G:=\mathrm{Mod}(S)$ which is determined by a probability 
measures on $G$ whose support generates a non-elementary subgroups.
It has been shown that such a random walk gives rise to pseudo-Anosov elements with asymptotic probability one 
\cite{Kow,Mah2011,Mah2012, Rivin}.
Let $\mu$ be a probability measure on $G$.
A subset $A\subset G$ is said to be exponentially small (with respect to $\mu$) if the probability that 
the random walk determined by $\mu$ visits $A$ decays exponentially with the number of steps.
A subset is called exponentially large (with respect to $\mu$) if its complement is exponentially small.
The work of Maher \cite{Mah2012} can be stated as ``the set of pseudo-Anosov elements is exponentially large".
In this paper, we consider fibered commensurability, a notion introduced by Calegari-Sun-Wang \cite{CSW},
of random mapping classes.
Roughly, a mapping class $\phi$ is said to cover another mapping class $\varphi$ 
if $\phi$ is a power of some lift of $\varphi$ with respect to some finite covering of underlying surfaces.
The commensurability with respect to this covering relation is called fibered commensurability.
Each commensurability class enjoys an order by the covering relation.
It has been shown \cite{CSW, Masa} that for pseudo-Anosov case, each commensurability class contains 
a unique minimal (orbifold) element (see Theorem \ref{thm.uniqueminimal}).
Our aim is to prove that the set of minimal elements is exponentially large 
with respect to any measure which satisfies a suitable condition (Condition \ref{condi.munotlift}).
As an application of the minimality, we also show a result on arithmeticity of random mapping tori. 
By using random walks on $G$, we may generate randomly 3-manifolds by taking mapping tori.
The work of Thurston \cite{Thu} together with \cite{Mah2012} shows that the set of mapping classes with hyperbolic
mapping tori is exponentially large.
A cusped hyperbolic 3-manifold is called arithmetic if it is commensurable to a Bianchi orbifold (see \S \ref{sec.arithmetic}).
Several distinguished hyperbolic 3-manifolds, for example the complement of the figure eight knot or the Whitehead link, are known
to be arithmetic.
However, a ``generic" hyperbolic 3-manifold is believed to be non-arithmetic.
The minimality of random mapping classes together with the work by Bowditch-Maclachlan-Reid \cite{BMR}
enables us to prove that the set of mapping classes with arithmetic mapping tori is
exponentially small if $S$ has at least one puncture. 
We also prove that the set of mapping classes with asymmetric mapping tori is exponentially large for closed case.

The paper is organized as follows.
In \S \ref{sec.pre}, we prepare several definitions and facts about random walks on groups and mapping class groups.
Note that to prove that a given mapping class $\phi$ is minimal, 
it suffices to show that $\phi$ is primitive and not symmetric.
In \S \ref{sec.primitive} we prove the primitivity of random mapping classes.
\begin{thm}\label{thm.primitive}
Let $\mu$ be a probability measure on $G$ whose support is finite and generates a non-elementary subgroup.
Then the set of primitive elements in $G$ is exponentially large with respect to $\mu$.
\end{thm}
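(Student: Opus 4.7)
The plan is to show that the set of non-primitive elements,
$\bigcup_{k\geq 2} G^{(k)}$ where $G^{(k)} := \{\psi^k : \psi \in G\}$,
is exponentially small with respect to $\mu$. The approach combines Maher's linear progress in the curve complex with an ``algebraic sparsity'' estimate for $k$-th powers in $G$.

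First I would invoke Maher's theorem and its refinement on linear progress \cite{Mah2012}: outside an exponentially small set, $\phi_n$ is pseudo-Anosov and its translation length $\tau_{\mathcal C}(\phi_n)$ on the curve complex $\mathcal C(S)$ grows at least linearly, say $\tau_{\mathcal C}(\phi_n) \geq Ln$ for some $L>0$ depending on $\mu$. Next I would bound the possible exponent $k$. If $\phi_n = \psi^k$ with $k \geq 2$ and $\phi_n$ pseudo-Anosov, then $\psi$ is also pseudo-Anosov, shares the invariant axis of $\phi_n$, and satisfies $\tau_{\mathcal C}(\phi_n) = k\tau_{\mathcal C}(\psi)$. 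Combined with the Masur--Minsky uniform positive lower bound $\tau_{\min}>0$ for pseudo-Anosov translation lengths on $\mathcal C(S)$, this forces $k \leq \tau_{\mathcal C}(\phi_n)/\tau_{\min} \leq C n$ for a constant $C=C(\mu)$. Thus it suffices to bound $\mu^{*n}(G^{(k)})$ for each $2 \leq k \leq Cn$ and sum.

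For each fixed $k \geq 2$, the set $G^{(k)}$ should be ``exponentially thin'' in $G$: along the invariant axis of a pseudo-Anosov the unique primitive root is essentially as short as possible, so in any ball $B_R$ in a word metric the subset $G^{(k)} \cap B_R$ has size controlled by $|B_{R/k+O(1)}|$. Using the hyperbolicity of $\mathcal C(S)$ together with Maher's ping-pong and deviation estimates (or Gou\"ezel--Mathieu type inequalities applied to the action on $\mathcal C(S)$), one combines this geometric sparsity with the positive drift of the random walk to obtain $\mu^{*n}(G^{(k)}) \leq A e^{-\alpha n}$ with rate $\alpha>0$ uniform in $k\geq 2$. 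Summing over the at most linearly many exponents $k$ yields the desired exponential decay for the full non-primitive set.

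The hard part will be the last step: making the sparsity-to-probability transfer rigorous for the mapping class group, which does not act properly on $\mathcal C(S)$, so that one cannot directly appeal to exponential-growth counting as for hyperbolic groups. The algebraic input that should rescue the argument is that the centralizer of a pseudo-Anosov in $G$ is virtually infinite cyclic, hence each pseudo-Anosov has a unique primitive root; this reduces the counting of possible $k$-th roots to a single axis in $\mathcal C(S)$ and interfaces cleanly with the hyperbolicity estimates underlying Maher's work.
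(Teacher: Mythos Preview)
Your setup is sound: linear progress of translation length on $\mathcal{C}(S)$ and the Masur--Minsky lower bound do force $k\le Cn$, and the observation that a pseudo-Anosov has a unique primitive root is correct. The gap is entirely in the third step, and you have correctly identified it as ``the hard part'' without actually closing it.

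The sparsity claim ``$G^{(k)}\cap B_R$ has size controlled by $|B_{R/k+O(1)}|$'' is not justified and is likely false as stated: if $\phi=\psi^k$ then $\psi$ has \emph{curve-graph} translation length $\tau_{\mathcal C}(\phi)/k$, but its \emph{word length} need not be comparable to $|\phi|/k$ --- conjugating a short pseudo-Anosov far from the identity produces $\psi$ with small $\tau_{\mathcal C}$ but large word length. Even granting some sparsity, you give no mechanism to convert it into a probability estimate: the action on $\mathcal{C}(S)$ is not proper, so exponential growth/counting arguments \`a la hyperbolic groups do not transfer, and ``Gou\"ezel--Mathieu type inequalities'' would need to be made precise in this setting. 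Finally, you need the rate $\alpha$ to be uniform in $k$ in order to sum over linearly many exponents, and nothing in your sketch supplies that uniformity.

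The paper's argument sidesteps all of this by avoiding the decomposition over $k$ entirely. The key observation is that if $\omega_n=\phi^k$ with $k\ge 2$, then translating by $\phi^{\lfloor k/2\rfloor}$ carries one segment of the axis of length $\asymp Ln$ onto another; since the sample path is $D$-proximal to the axis (Calegari--Maher), this produces two \emph{aligned} subpaths $[\omega_a,\omega_b]$ and $[\omega_c,\omega_d]$ of length $\ge Mn$ related by a single group element. Bowditch's \emph{acylindricity} of the $G$-action on $\mathcal{C}(S)$ then bounds the number of such aligning elements uniformly, and a shadow estimate shows that the terminal index $d$ lands in a set of exponentially small probability. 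This is the missing ingredient in your approach: acylindricity is what replaces properness and delivers a bound uniform in $k$ without any counting in $G$.
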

Next, we prove that random mapping classes are not symmetric in \S \ref{sec.notlift}.
We call a mapping class {\em symmetric} if it is a lift with respect to some finite covering $\pi:S\map S'$.
We need further assumption for the measure $\mu$ to avoid the case that there is some finite covering 
$\pi:S\map S'$ such that every element in the support of $\mu$ is a lift of a mapping class on $S'$.
Let $\mathcal{PMF}(S)$ denote the set of projective measured foliations on $S$, where in the case of orbifolds, we consider
the one for the surface we get by puncturing the orbifold points.
Each covering $\pi:S\map S'$ determines a map $\Pi:\mathcal{PMF}(S')\map \mathcal{PMF}(S)$ 
so that $a\in\Pi(\PMF(S'))$ if and only if $\pi(a)\in\PMF(S')$.
Let $\mathrm{gr}$ (resp. $\mathrm{sgr}(\mu)$) denote the group (resp. semigroup) generated by the support of $\mu$.
The condition for the measure $\mu$ which we need is the following.
\begin{condi}\label{condi.munotlift}
The support is finite, and generates a non-elementary subgroup of $G$.
Moreover, for any (possibly orbifold) covering $\pi:S\map S'$, $\mathrm{sgr}(\mu)$ contains 
a pseudo-Anosov element 
whose fixed points set is disjoint from $\Pi(\mathcal{PMF}(S'))$.
\end{condi}
In \S \ref{sec.notlift}, we prove:
\begin{thm}\label{thm.notlift}
Let $\mu$ be a probability measure on $G$ which satisfies Condition \ref{condi.munotlift}.
Then the set of symmetric elements
is exponentially small with respect to $\mu$.
\end{thm}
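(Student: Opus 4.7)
The plan is to reduce Theorem \ref{thm.notlift} to the case of a single covering and then use a geometric argument on $\PMF(S)$. First I would observe that, up to equivalence, there are only finitely many (possibly orbifold) coverings $\pi:S\map S'$ to consider: the degree of such a $\pi$ is bounded by $|\chi(S)|$, and for each of the finitely many topological types of $S'$ there are only finitely many index-$d$ subgroups of $\pi_1(S')$ isomorphic to $\pi_1(S)$; an argument of this flavor appears already in \cite{CSW,Masa}. A union bound then reduces the theorem to showing, for each fixed $\pi$, that the subgroup $H_\pi\subset G$ of lifts with respect to $\pi$ is exponentially small.

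Fix such a $\pi$ and set $K_\pi:=\Pi(\PMF(S'))\subset\PMF(S)$, a compact set. The key geometric fact is that if $\phi\in H_\pi$ is pseudo-Anosov then $\phi$ descends to a pseudo-Anosov on $S'$ whose stable and unstable foliations lift to those of $\phi$, so both fixed points $\phi^\pm\in\PMF(S)$ lie in $K_\pi$. Contrapositively, any pseudo-Anosov whose attracting fixed point avoids $K_\pi$ lies outside $H_\pi$. By Condition \ref{condi.munotlift} we may choose a pseudo-Anosov $f=f_\pi\in\mathrm{sgr}(\mu)$ with $\{f^\pm\}\cap K_\pi=\emptyset$. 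Since by \cite{Mah2012} $w_n$ is pseudo-Anosov with exponentially high probability, it suffices to prove that $\Pr(w_n^+\in K_\pi)\le Ce^{-cn}$ for some constants $c,C>0$.

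To establish this exponential bound I would exploit the north-south dynamics of $f$ on $\PMF(S)$: since $f^\pm\notin K_\pi$ and $K_\pi$ is compact, for $N$ large enough $f^N$ maps $K_\pi$ into an arbitrarily small neighborhood of $f^+$, which is disjoint from $K_\pi$. Because $f\in\mathrm{sgr}(\mu)$, a fixed word $w_f$ in $\mathrm{supp}(\mu)$ representing $f^N$ occurs as a contiguous block of the sampled walk with uniform positive probability, and disjoint occurrences are independent. A block/Schottky-type decomposition combined with the quasi-geodesic tracking and concentration estimates of \cite{Mah2012} then shows that, with exponentially high probability, such a good block occurs at a position controlling the location of $w_n^+$, forcing $w_n^+\notin K_\pi$.

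The main obstacle is precisely this last step. While the almost-sure convergence $w_n^+\to\lambda_\infty$, together with $\nu(K_\pi)=0$ for the hitting measure $\nu$, gives only asymptotic decay, upgrading to an exponential rate requires quantitatively relating a spliced-in copy of $f^N$ to the attracting fixed point of the resulting pseudo-Anosov; since $w_n^+$ depends on the entire word rather than only its tail, one has to combine the independence of disjoint blocks with the exponential concentration of $w_n^+$ around $\lambda_\infty$ on the $\PMF$-boundary. This is the delicate part of the argument and is where the acylindrical-hyperbolicity of $G$ acting on the curve complex, via the framework of \cite{Mah2012}, is essential.
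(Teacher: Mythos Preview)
Your reduction step contains a genuine gap. You correctly note that there are only finitely many coverings $\pi:S\to S'$ \emph{up to equivalence} (say $\pi_1\sim\pi_2$ if $\pi_2=\pi_1\circ f$ for some homeomorphism $f$ of $S$), but this equivalence does not preserve the lift subgroup: if $\pi_2=\pi_1\circ f^{-1}$ then $H_{\pi_2}=fH_{\pi_1}f^{-1}$. Since $\pi\circ f^{-1}$ is again a covering for every $f\in G$, the subgroups $H_\pi$ attached to a single equivalence class sweep out an entire $G$-conjugacy class of subgroups, and the symmetric elements coming from that class form the \emph{infinite} union $\bigcup_{g\in G} gH_{\pi_0}g^{-1}$. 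A union bound over equivalence classes therefore does not reduce the theorem to showing that one fixed $H_\pi$ is exponentially small. On the $\PMF$ side, what you would actually need is exponential decay of $\Pr\bigl(\omega_n^{+}\in\bigcup_{g\in G} gK_\pi\bigr)$, and your Schottky-block argument---which hinges on the single condition $f^{\pm}\notin K_\pi$---gives no control over the translates $gK_\pi$; indeed $f^{+}\in gK_\pi$ for infinitely many $g$.

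Handling this infinitude of conjugate coverings is precisely what the paper's proof is built around. Working in the curve complex with $E=\Pi_{\mathcal C}(\mathcal C(S'))$, the key technical ingredient is Lemma~\ref{lem.finiteparallel}: for any two points $a,b\in\mathcal C(S)$ that are far enough apart, the number of translates $gE$ passing within a fixed distance of both $a$ and $b$ is uniformly bounded. This is what allows a Maher-style shadow argument to go through: once two well-separated proximal points along the axis of $\omega_n$ are fixed, only boundedly many candidates $gE$ remain, and the exponential decay of each shadow $S_1(gE,r)$ (Lemma~\ref{lem.setshadow}, resting on $\nu(gE_{\mathcal T})=0$ from Lemma~\ref{lem.notlift}) then yields Lemma~\ref{lem.alignlift} and the theorem. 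The bounded-parallel-translates lemma requires substantial input from subsurface projections and Teichm\"uller geometry and has no counterpart in your proposal; without it, or some substitute that tames the union over $g\in G$, the argument does not close.
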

Putting Theorem \ref{thm.primitive} and \ref{thm.notlift} together, we have:
\begin{thm}\label{thm.minimal}
Let $\mu$ be a probability measure on $G$ which satisfies Condition \ref{condi.munotlift}.
Then the set of minimal elements in their fibered commensurability class is exponentially large with respect to $\mu$.
\end{thm}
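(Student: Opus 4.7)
The plan is to reduce Theorem \ref{thm.minimal} directly to the two preceding theorems via the minimality criterion announced in the introduction: any pseudo-Anosov mapping class that is both primitive and not symmetric is minimal in its fibered commensurability class. This criterion is a direct consequence of the uniqueness of the minimal representative (Theorem \ref{thm.uniqueminimal}): if such a $\phi$ were strictly above some $\psi$ in the covering order, then either $\phi$ would be a nontrivial power of a lift of $\psi$ (making $\phi$ non-primitive) or $\phi$ would itself be a lift of $\psi$ through a nontrivial covering (making $\phi$ symmetric), and both cases are excluded by assumption.

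Granting this, write $\mathrm{pA}$, $P$, $\Sigma\subset G$ for the sets of pseudo-Anosov, primitive, and symmetric mapping classes respectively, and let $M\subset G$ denote the set of minimal elements. Then $\mathrm{pA}\cap P\cap(G\setminus\Sigma)\subset M$, so
\begin{equation*}
G\setminus M\subset (G\setminus\mathrm{pA})\cup(G\setminus P)\cup\Sigma.
\end{equation*}
By Maher \cite{Mah2012}, $G\setminus\mathrm{pA}$ is exponentially small with respect to $\mu$; by Theorem \ref{thm.primitive}, $G\setminus P$ is exponentially small; and by Theorem \ref{thm.notlift}, which applies since $\mu$ satisfies Condition \ref{condi.munotlift}, $\Sigma$ is exponentially small.

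To finish I would invoke the elementary subadditivity fact that a finite union of exponentially small sets is exponentially small: if the probability that the $n$-step walk visits $A_i$ is at most $C_i r_i^n$ with $0<r_i<1$ for $i=1,\dots,k$, then by the union bound the probability that it visits $A_1\cup\cdots\cup A_k$ is at most $\bigl(\sum_i C_i\bigr)\bigl(\max_i r_i\bigr)^n$. Applied to the three sets above this yields $G\setminus M$ exponentially small, hence $M$ exponentially large with respect to $\mu$. The entire substantive content has been absorbed into Theorems \ref{thm.primitive} and \ref{thm.notlift}; the only genuinely new ingredient at this stage is the ``primitive and not symmetric $\imp$ minimal'' implication, which is purely order-theoretic and follows from Theorem \ref{thm.uniqueminimal}. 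I therefore do not anticipate any real obstacle in carrying out this aggregation step; the difficulties of the paper lie in establishing the two constituent theorems.
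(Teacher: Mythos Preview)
Your proposal is correct and matches the paper's approach exactly: the paper states Theorem \ref{thm.minimal} simply as ``Putting Theorem \ref{thm.primitive} and \ref{thm.notlift} together'' without a separate proof, relying on the observation (stated just after Theorem \ref{thm.uniqueminimal}) that primitive and not symmetric implies minimal, together with the closure of exponentially small sets under finite unions noted in \S\ref{sec.pre}. One very minor remark: the criterion ``primitive and not a lift $\Rightarrow$ minimal'' holds directly from the definition of the covering order and does not actually require the uniqueness statement of Theorem \ref{thm.uniqueminimal} (nor the pseudo-Anosov hypothesis), so your invocation of Maher's result on pseudo-Anosovs, while harmless, is not strictly needed here.
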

Finally in \S \ref{sec.arithmetic}, we prove the following theorem.
\begin{thm}\label{thm.arithmetic}
Suppose that $S$ has at least one puncture.
Let $\mu$ be a probability measure on $G$ which satisfies Condition \ref{condi.munotlift}.
Then the set of mapping classes with arithmetic mapping tori is exponentially small with respect to $\mu$.
\end{thm}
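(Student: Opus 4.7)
The plan is to reduce the statement, via Theorem \ref{thm.minimal} and a finiteness input from Bowditch-Maclachlan-Reid, to the elementary fact that any finite subset of $G$ is exponentially small under $\mu$. By Thurston's hyperbolization for mapping tori, $M_\phi$ is hyperbolic exactly when $\phi$ is pseudo-Anosov, so only pseudo-Anosov mapping classes can give an arithmetic mapping torus, and by Maher's theorem the complement of the pseudo-Anosov set is already exponentially small. Combining with Theorem \ref{thm.minimal}, the set of pseudo-Anosov mapping classes that are minimal in their fibered commensurability class is exponentially large. It therefore suffices to show that the set $A$ of minimal pseudo-Anosov mapping classes $\phi$ with $M_\phi$ arithmetic is exponentially small.

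The key claim is that $A$ is in fact \emph{finite}. Any two pseudo-Anosov mapping classes in the same fibered commensurability class have commensurable mapping tori (this is essentially built into the Calegari-Sun-Wang definition), and arithmeticity of a cusped hyperbolic $3$-manifold is preserved under commensurability; hence arithmeticity of $M_\phi$ depends only on the fibered commensurability class of $\phi$. When $S$ has at least one puncture, the theorem of Bowditch-Maclachlan-Reid \cite{BMR} asserts that there are only finitely many commensurability classes of arithmetic cusped hyperbolic $3$-manifolds fibering over the circle with fiber homeomorphic to $S$. By Theorem \ref{thm.uniqueminimal} each such commensurability class contributes at most one element to $A$, so $A$ is finite.

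It remains to argue that any finite subset of $G$ is exponentially small under $\mu$. Since $\mathrm{gr}(\mu)$ is non-elementary it contains a free subgroup generated by two independent pseudo-Anosov elements (ping-pong in $\PMF(S)$), hence is non-amenable; the convolution powers $\mu^{*n}$ therefore have spectral radius strictly less than $1$, so by the standard Cauchy-Schwarz bound $\mu^{*n}(g)\le \mu^{*n}(e)^{1/2}(\check\mu*\mu)^{*n}(e)^{1/2}$ the hitting probability of any fixed element decays exponentially in $n$, uniformly in $g\in G$. Summing this bound over the finitely many elements of $A$ preserves exponential decay. Taking the union with the exponentially small complements produced by Maher's theorem and Theorem \ref{thm.minimal} then shows that the set of mapping classes with arithmetic mapping tori is exponentially small.

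The principal obstacle is the appeal to \cite{BMR}: one must verify that their finiteness theorem applies to bundles with arbitrary punctured fiber surface, and, more subtly, pin down the precise dictionary between fibered commensurability of pseudo-Anosov mapping classes and commensurability of their mapping tori so that the property ``$M_\phi$ is arithmetic'' descends cleanly to fibered commensurability classes. Once this is in hand, everything else is a routine combination of Theorem \ref{thm.minimal} with the non-amenability of $\mathrm{gr}(\mu)$.
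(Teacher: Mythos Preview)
Your reduction to the set $A$ of minimal pseudo-Anosov mapping classes with arithmetic mapping tori is the right move, and so is invoking \cite{BMR}. The gap is in the claim that $A$ is finite. Fibered commensurability is defined on conjugacy classes (see \S\ref{sec.pre}), and the unique minimal element of Theorem~\ref{thm.uniqueminimal} is unique only up to conjugacy. Minimality and arithmeticity of the mapping torus are both conjugation-invariant, so if $\phi\in A$ then $g\phi g^{-1}\in A$ for every $g\in G$. Since a pseudo-Anosov has virtually cyclic centralizer, each such conjugacy class is infinite. Thus $A$ is an infinite union of finitely many conjugacy classes, not a finite set, and your non-amenability argument for the exponential smallness of finite sets does not apply.

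The paper repairs exactly this point: from the finitely many \emph{conjugacy classes} in $A$ one extracts a uniform upper bound on the translation length $\tau$ on $\mathcal{C}(S)$ (which is a conjugacy invariant), and then appeals to Lemma~\ref{lem.translen} (linear growth of $\tau(\omega_n)$) to conclude that $A$ is exponentially small. A secondary point: the version of \cite{BMR} you quote is slightly off --- Theorem~\ref{thm.bmr} gives finitely many \emph{cyclic} commensurability classes of arithmetic mapping tori with fiber $S$, and the paper uses the observation that two minimal mapping classes give cyclic commensurable mapping tori iff they are conjugate (via Theorem~\ref{thm.uniqueminimal}) to pass from cyclic commensurability classes to conjugacy classes.
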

In \S \ref{sec.asymmetric}, it is proved that closed random mapping tori are asymmetric.

\section{Preliminary}\label{sec.pre}
In this section, we summarize several definitions and facts that we use throughout the paper. 
Interested readers may refer to several papers regarding to random walks on the mapping class groups 
(for example \cite{KM, Mah2011}) in which
there are detailed expositions of basic theory of both random walks and mapping class groups.
\subsection{Random walks on groups}
We recall the definitions and terminologies of random walks.
See \cite{Wo} for more details about random walks on groups.
Let $G$ be a countable group. 
A (possibly infinite) matrix $\mathbb{P} = (p_{g,h})_{g,h\in G}$ is called {\em stochastic} if every element is non-negative and
$$\sum_{h\in G} p_{g,h} = 1$$
for all $g\in G$.
For a given probability measure $\mu$ on $G$,
by putting $p_{g,h} = \mu(g^{-1}h)$, we have a stochastic matrix $\mathbb{P}_\mu = (p_{g,h})_{g,h\in G}$.
Let $P_n$ denote the probability measure on $(G^n,2^{G^n})$ defined by
$$P_n(A) = \sum_{(g_1,\dots, g_{n})\in A} p_{\mathrm{id}, g_1}p_{g_1, g_2}\cdots p_{g_{n-1},g_{n}} \text{ for } A\in 2^{G^n}.$$
Note that by definition, we have $P_{n+1}(A\times G) = P_n(A)$ for any $A\in 2^{G^n}$.
Let $\mathcal{B}(G^\mathbb{N})$ denote the $\sigma$-algebra generated by cylinder sets, where
a cylinder set is a subset defined as 
$$\{\omega = (\omega_n)_{n\in\mathbb{N}}\in G^\mathbb{N}\mid (\omega_1, \dots, \omega_n) \in A\}$$
for some $A\subset G^n$.
Then by the Kolmogorov extension theorem, there exists a unique measure $P$ on $(G^\mathbb{N}, \mathcal{B}(G^\mathbb{N}))$ which satisfies
$$ P(A\times G^\mathbb{N}) = P_n(A) \text{ for all } n\in \mathbb{N}, \text{ and } A\in 2^{G^n}.$$
For $\omega = (\omega_n)\in G^\mathbb{N}$, we define $G$-valued random variables $X_n$ 
on $(G^{\mathbb{N}},\mathcal{B}(G^\mathbb{N}))$ by $X_n(\omega) = \omega_n$.
Thus we have a stochastic process $\{X_n\}_{n\in \mathbb{N}}$ which is a Markov chain with the transition matrix $\mathbb{P}_{\mu}$.
We call this Markov chain $\{X_n\}_{n\in\mathbb{N}}$ the {\em random walk} determined by $\mu$.

Let us fix a probability measure $\mu$ and the random walk determined by $\mu$.
Each element $(\omega_n)_{n\in\mathbb{N}}\in G^{\mathbb{N}}$ is called a {\em sample path}.
Let $A\subset G$. 
By abbreviation of notations, we write $\BP(\omega_n \in A)$ to mean $P(G^{n-1}\times A \times G^\mathbb{N})$. 
A subset $A\subset G$ is called {\em exponentially small} (with respect to $\mu$) 
if there exist $c<1, K>0$ which depend only on $\mu$ and $A$
such that $\BP(\omega_n \in A)<Kc^n$.
A subset is called {\em exponentially large} (with respect to $\mu$) if its complement is exponentially small.
Let $Q$ be a property for elements in $G$.
We say that the random walks determined by $\mu$ has property $Q$ {\em with exponentially high probability} if
$S_Q:= \{g\in G\mid g \text{ is } Q\}$ is exponentially large.
It can be readily seen that if $A,B\subset G$ are both exponentially small (resp. large), then so is $A\cup B$ (resp. $A\cap B$).

\subsection{Mapping class groups and curve graphs}
For more details about topics in this subsection, one may refer to the books \cite{Bow2, FM}.
Let $S:= S_{g,n}$ be an orientable surface of finite type $(g,n)$ where $g$ is the genus and $n$ is the number of punctures.
In this paper, we always suppose $3g-3+n>0$ unless otherwise stated.
The {\em mapping class group} $\mathrm{Mod}(S)$ is the group of isotopy classes of orientation preserving automorphisms on $S$.
A mapping class is called {\em pseudo-Anosov} if it is aperiodic and has no fixed 1-dimensional submanifold of $S$.
Thurston \cite{Thu2} 
showed that each pseudo-Anosov mapping class has exactly two fixed points  $\mathcal{F}_s, \mathcal{F}_u$
in the space $\mathcal{PMF}(S)$ of projective measured foliations.
A subgroup of $\mathrm{Mod}(S)$ 
is called {\em non-elementary} if it contains two pseudo-Anosov mapping classes with disjoint fixed points in
$\mathcal{PMF}(S)$.

The {\em curve graph} $\mathcal{C}(S)$ of $S$ is a graph whose vertices consist of isotopy classes of simple closed curves,
and two vertices are connected by an edge if the corresponding curves can be disjointly represented on $S$.
By giving length $1$ to every edge, the curve graph enjoys a metric $d_{\mathcal{C}(S)}(\cdot, \cdot)$.
If $S$ is an annulus, then vertices of $\mathcal{C}(S)$ are essential arcs, considered up to isotopy relative to their boundary.
Edges are placed between vertices with representatives having disjoint interiors.

Let $(X, d_X)$ be a metric space.
For a fixed point $p\in X$, the {\em Gromov product}  $(x\cdot x')_p$ of two points $x,x'\in X$ is defined by 
$$(x\cdot x')_p = \frac{1}{2}(d_X(x,p) + d_X(x',p) - d_X(x,x')).$$
Then for $r>0$, a {\em shadow} $S_p(x,r)\subset X$ is defined by
$$S_p(x,r) := \{y\in X\mid (x\cdot y)_p\geq r \}.$$
If we have another metric space $(Y,d_Y)$, 
a map $f:X\map Y$ is said to be {\em $Q$-quasi-isometric} if for any $x,x'\in X$,
$$d_X(x,x')/Q - Q \leq d_Y(f(x), f(x')) \leq Qd_X(x,x') + Q.$$
Such $f$ is called {\em $Q$-quasi-isometry} if it further satisfies that for any $y\in Y$, there exists $x\in X$ such that
$d_Y(y, f(x))<Q$.
Two metric spaces are said to be {\em quasi-isometric} if there is a $Q$-quasi-isometry between the two.
Suppose further that $X$ is a geodesic space.
Then $X$ is called {\em $\delta$-hyperbolic} if every geodesic triangle is {\em $\delta$-thin};
one side of a geodesic triangle is contained in the $\delta$-neighborhood of the other two sides.
$X$ is called {\em hyperbolic} if it is $\delta$-hyperbolic for some $\delta\geq0$.
Two geodesics in $X$ are said to be {\em asymptotic} if they are finite Hausdorff distance apart.
We define {\em the Gromov boundary} as the set of asymptotic classes of geodesics.
Hyperbolicity is invariant under quasi-isometries,
and a quasi-isometry induces a homeomorphism of the Gromov boundaries.
For two points $x,x'$ in a geodesic space $X$, we denote by $[x,x']$ a geodesic connecting $x$ and $x'$.
Note that there can be many such geodesics, and $[x,x']$ is an arbitrarily chosen one.
We suppose that if $a,b\in [x,x']$, then $[a,b]\subset[x,x']$.
\begin{rmk}\label{rmk.shadow}
It is well known that if $X$ is $\delta$-hyperbolic, the Gromov product $(x,x')_p$ is equal to the distance from $p$ to $[x,x']$
up to additive constant $K$ which depends only on $\delta$ (c.f. Lemma \ref{lem.fourpt}).
By this fact, a shadow $S_p(x, r)$ for $x\in X$ and $r>0$ can be (coarse equivalently) regarded as the set of $x'\in X$ such that
every geodesic connecting $p$ and $x'$ passes through a point in the $(d_X(x,p)-r+C)$-neighborhood of $x$ for some 
$C$ depending only on $\delta$.
\end{rmk}

In \cite{MM}, Masur-Minsky proved that the curve graph $\mathcal{C}(S)$ is hyperbolic. 
The mapping class group $G:=\mathrm{Mod}(S)$ acts isometrically on $\mathcal{C}(S)$.
Using this action, by fixing a base point $p\in\mathcal{C}(S)$, $G$ admits a $\delta$-hyperbolic (improper) metric which we denote again by $d_{\mathcal{C}(S)}$;
$$d_{\mathcal{C}(S)}(g,h) = d_{\mathcal{C}(S)}(gp,hp) .$$

\subsection{Commensurability of mapping classes}
In \cite{CSW}, Calegari-Sun-Wang defined commensurability of mapping classes on possibly distinct surfaces as follows.
\begin{defi}[\cite{CSW}]
Let $S_1$ and $S_2$ be orientable surfaces of finite type.
A mapping class $\phi_1\in\mathrm{Mod}(S_1)$ {\em covers} $\phi_2\in\mathrm{Mod}(S_2)$ if
there exists a finite covering $\pi:S_1\map S_2$ and 
$k\in\mathbb{Z}\setminus\{0\}$ such that a lift $\varphi$ of $\phi_2$ with respect to $\pi$ satisfies 
$\varphi^k = \phi_1$.
Two mapping classes are said to be {\em commensurable}
if there exists a mapping class that covers both.
\end{defi}

Since this gives commensurability of the monodromies of fibers on orientable surface bundles over the circle, 
this notion is also called {\em fibered commensurability}.
Commensurability gives rise to an equivalence relation by taking transitive closure. 
We consider conjugacy classes in order to have each commensurability class enjoy an order by covering relation (see \cite{CSW} for a detail).
We call a mapping class {\em minimal} if it is a minimal element with respect to the order in its commensurable class.
By extending our category to the orbifolds and  orbifold automorphisms, for the cases where mapping classes are pseudo-Anosov, we have the following uniqueness of minimal element.
\begin{thm}[\cite{CSW, Masa}]\label{thm.uniqueminimal}
If $\phi\in\mathrm{Mod}(S)$ is pseudo-Anosov, then the commensurability class of $\phi$ contains a unique minimal (orbifold) element.
\end{thm}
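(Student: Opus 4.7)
The plan is to combine a descent argument for existence with a construction of a canonical minimal element via quotients by dynamical symmetries for uniqueness.

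For existence, I would argue that any strictly descending chain in the commensurability class terminates. When $\phi_i$ properly covers $\phi_{i+1}$ via an orbifold cover of degree $d_i$ with power $k_i$, one has $d_ik_i\geq 2$ together with $|\chi(\mathcal{O}_i)|=d_i|\chi(\mathcal{O}_{i+1})|$ and $\log\lambda(\phi_i)=k_i\log\lambda(\phi_{i+1})$, so the product $|\chi(\mathcal{O}_i)|\log\lambda(\phi_i)$ is multiplied by $d_ik_i\geq 2$ at each step. Since Penner's universal lower bound gives $|\chi(\mathcal{O})|\log\lambda(\phi)\geq c>0$ for any pseudo-Anosov on a hyperbolic $2$-orbifold, an infinite proper descending chain is impossible, and minimal elements exist.

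For uniqueness, I would construct a canonical minimal element as the quotient $(\bar S,\bar\phi)=(S,\phi)/\mathrm{Sym}(\phi)$, where $\mathrm{Sym}(\phi)$ is the finite group of mapping classes preserving both the stable and unstable foliations of $\phi$ together with their transverse measures. Elements of $\mathrm{Sym}(\phi)$ commute with $\phi$ up to isotopy, so $\phi$ descends to an orbifold pseudo-Anosov mapping class $\bar\phi$ on the orbifold $\bar S$. The main claim is then that $(\bar S,\bar\phi)$ is covered by every element of the commensurability class: if $(S,\phi)$ covers $(\mathcal{O},\psi)$ via $\pi:S\to\mathcal{O}$ with $\varphi^k=\phi$, then the foliations of $\phi$ are pulled back from those of $\psi$, so the deck group of $\pi$ preserves them and their transverse measures and therefore lies in $\mathrm{Sym}(\phi)$. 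This forces $\pi$ to factor through $\bar S$, showing that $(\mathcal{O},\psi)$ itself covers $(\bar S,\bar\phi)$; iterating along any zigzag chain of covers in the commensurability class then yields uniqueness.

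The main obstacle is the careful treatment of orbifold mapping classes and orbifold coverings. One must verify that $\bar\phi$ is well-defined as an orbifold automorphism rather than merely a continuous map of the underlying topological quotient, that Penner's bound and the Euler characteristic multiplicativity formula extend cleanly to the orbifold setting, and that $\mathrm{Sym}(\phi)$ genuinely contains every deck group that can appear in an orbifold cover factoring $\phi$. A secondary point is ensuring that the canonical quotient construction is stable under the covering relation in both directions, so that the argument transfers along the zigzag chains defining commensurability.
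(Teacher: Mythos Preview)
The paper does not prove this theorem; it is quoted from \cite{CSW, Masa} as background and used as a black box. So there is no proof in the paper to compare against, and the question is simply whether your outline is sound.

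Your existence argument via descent on $|\chi(\mathcal{O})|\log\lambda(\phi)$ is correct (note the product is \emph{divided} by $d_ik_i$ as you pass from $\phi_i$ to $\phi_{i+1}$, not multiplied, but the conclusion is the same).

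The uniqueness argument, however, has a genuine gap. Your candidate $(\bar S,\bar\phi)$ only accounts for the covering part of the relation and ignores the ``power'' part. Concretely, suppose $(S,\phi)$ covers $(\mathcal{O},\psi)$ via $\pi:S\to\mathcal{O}$ with $\varphi^k=\phi$ and $k\geq 2$. You correctly observe that the deck group of $\pi$ lies in $\mathrm{Sym}(\phi)$, so $\pi$ factors through $\bar S$ and we obtain $p:\mathcal{O}\to\bar S$. But to conclude that $(\mathcal{O},\psi)$ covers $(\bar S,\bar\phi)$ you would need $\psi$ to be a power of some lift of $\bar\phi$ through $p$. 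Every lift of $\bar\phi$ has dilatation $\lambda(\bar\phi)=\lambda(\phi)$, whereas $\lambda(\psi)=\lambda(\phi)^{1/k}$; hence $\psi$ cannot be a positive power of any such lift when $k\geq 2$. What actually happens is the reverse: $(\bar S,\bar\phi)$ covers something with monodromy a $k$-th root of $\bar\phi$, so $(\bar S,\bar\phi)$ is not minimal. The construction must therefore be supplemented by taking primitive roots after quotienting, and one then has to check either that this two-step process stabilizes, or that a single quotient by the larger group of mapping classes preserving the pair $\{\mathcal{F}_s,\mathcal{F}_u\}$ \emph{projectively} (which contains $\phi$ and all its roots, not just the measure-preserving symmetries) yields the minimal element directly. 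The arguments in \cite{CSW} and \cite{Masa} handle this, the former via the commensurator of the hyperbolic mapping torus in $\mathrm{PSL}(2,\mathbb{C})$ and the latter via the full symmetry group of the singular Euclidean structure; your $\mathrm{Sym}(\phi)$ as defined is too small.
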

Note that a mapping class $\phi$ is minimal if it is {\em primitive} (i.e. if $\varphi^k = \phi$, then $k = 1$ and $\phi = \varphi$, or
$k = -1$ and $\phi = \varphi^{-1}$)
and it is not a lift of any orbifold automorphism.

\section{Random mapping classes are primitive}\label{sec.primitive}
Throughout this section, let us fix an orientable surface $S$ of finite type and denote by 
$G$ the mapping class group $\mathrm{Mod}(S)$.
To prove the primitivity, we consider the action of $G$ on the curve graph $\mathcal{C}(S)$.
We shall fix a base point $p\in\mathcal{C}(S)$.
For $g\in G$, the translate $gp\in\mathcal{C}(S)$ is also denoted by $g$ by abuse of notation.
We abbreviate the distance on $\mathcal{C}(S)$ to $d_\mathcal{C}(\cdot, \cdot)$.
In this section, unless otherwise stated, we consider the random walk determined by 
a probability measure $\mu$ on $G$ with finite support which generates a non-elementary subgroup.

\subsection{Random mapping classes do not (anti-)align}
We first recall the work of Calegari-Maher \cite{CM}.
\begin{defi}
Let $p_0, \dots, p_n$ be points in $\mathcal{C}(S)$ and $\gamma = [p_0, p_n]$. 
A point $y\in\gamma$ is {\em $D$-proximal} (with respect to $p_0, \dots, p_n$)
 if $d_\mathcal{C}(y, p_i)<D$ for some $0\leq i\leq n$.
Let $\gamma_D$ denote the subset of $D$-proximal points of $\gamma$.
\end{defi}
Let $\omega = (\omega_n)$ be a sample path in $G^\mathbb{N}$,
then for large enough $n$, 
Calegari-Maher proved that most part of $[\omega_0, \omega_n]$ should be $D$-proximal with exponentially high probability.
\begin{lem}[{\cite[Lemma 5.14]{CM}}]\label{lem.proximal}
There are constants $C_1, K > 0$ and $c<1$ so that for any $\epsilon > 0$, there is a further constant $D$ depending on
$C_1$ and  $\epsilon$ with the following property.
Let $\gamma:= [\omega_0, \omega_n]$ and $\gamma_D$ denote the set of $D$-proximal points on $\gamma$ with respect to 
$\omega_0, \dots\omega_n\in\mathcal{C}(S)$. Then 
 $$\mathbb{P}((\mathrm{length}(\gamma) \geq C_1n) \wedge 
 (\mathrm{length}(\gamma_D )/\mathrm{length}(\gamma ) \geq 1 - \epsilon)) \geq 1 - Kc^n.$$
\end{lem}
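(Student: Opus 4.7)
The plan is to combine two standard inputs for random walks on Gromov-hyperbolic spaces, namely positive drift of the walk on $\mathcal{C}(S)$ and exponential shadow decay, and then to convert vertex-level tracking of the sample path into length coverage along $\gamma$ using the finiteness of $\mathrm{supp}(\mu)$. For the length bound $\mathrm{length}(\gamma)\geq C_1 n$, I would invoke Maher's positive drift theorem: because $\mu$ has finite support and generates a non-elementary subgroup, there is a constant $L>0$ with $\mathbb{P}(d_\mathcal{C}(\omega_0,\omega_n)<Ln)$ decaying exponentially in $n$, so taking $C_1<L$ yields the linear length assertion with exponential tail.

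For the pointwise tracking step, note that by Remark \ref{rmk.shadow} the event $\{d_\mathcal{C}(\omega_i,\gamma)>D\}$ is, up to additive $O(\delta)$, the event $\{(\omega_0\cdot\omega_n)_{\omega_i}>D\}$. Translating by $\omega_i^{-1}$, which acts by isometries on $\mathcal{C}(S)$, this rewrites as the Gromov product at the basepoint $p$ between a reversed walk of length $i$ and an independent forward walk of length $n-i$. A large value of this Gromov product forces both walks to spend their first many steps inside a common shadow based at $p$, and Maher's exponential shadow decay bounds that probability by $K'e^{-\alpha D}$ uniformly in $i$. Summing gives $O(ne^{-\alpha D})$ expected bad indices, and a concentration argument then upgrades this to the statement that more than $(\epsilon/2)n$ bad indices occur with only exponentially small probability in $n$.

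To pass from vertex-level tracking to coverage of $\gamma$, one uses that $\mathrm{supp}(\mu)$ is finite, giving a uniform step bound $M:=\max_{g\in\mathrm{supp}(\mu)}d_\mathcal{C}(p,gp)$. Consequently, the nearest-point projections on $\gamma$ of consecutive good sample-path vertices differ by at most $M+O(\delta+D)$, so the union of $D$-neighborhoods of good $\omega_i$ covers $\gamma$ except on a set of length at most (bad index count) times $M+O(\delta+D)$. Combined with the linear lower bound on $\mathrm{length}(\gamma)$ from the first step, enlarging $D$ if necessary forces the uncovered length to be at most $\epsilon\cdot\mathrm{length}(\gamma)$, giving the required density of $\gamma_D$ in $\gamma$.

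The main obstacle is the tracking step: whether $\omega_i$ is close to $\gamma$ depends on the entire sample path, so the bad events are not independent across $i$ and the naive Chernoff bound does not apply. Upgrading the first-moment estimate $O(ne^{-\alpha D})$ into an exponentially small deviation bound on the count of bad indices is the technical heart of the lemma, and it requires either a concentration estimate adapted to the hyperbolicity of $\mathcal{C}(S)$ or a decomposition of the walk into nearly independent blocks on which the individual tracking-failure probabilities multiply.
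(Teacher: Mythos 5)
The paper does not prove this lemma at all: it is imported verbatim as \cite[Lemma~5.14]{CM}, so there is no ``paper's own proof'' to compare against beyond that citation. You should therefore be judged on whether your sketch actually establishes the statement, and by your own admission it does not. The ingredients you assemble are the right ones and are indeed the ones that enter the Calegari--Maher argument: Maher's positive drift gives the linear lower bound on $\mathrm{length}(\gamma)$, and the reversal trick turning $(\omega_0\cdot\omega_n)_{\omega_i}$ into a Gromov product at the basepoint between two independent walks, followed by shadow decay, controls the probability that any single index $i$ is $D$-far from $\gamma$. The passage from a bad-index count to a length estimate via the finite step bound $M=\max_{g\in\mathrm{supp}(\mu)}d_\mathcal{C}(p,gp)$ is also fine once one is careful to sum over maximal runs of bad indices.

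The genuine gap is exactly the one you flag in your last paragraph, and it is not a minor technicality but the entire content of the lemma. The per-index estimate gives only $\mathbb{E}[\#\{\text{bad }i\}]=O(n e^{-\alpha D})$, and Markov's inequality then yields a bound on $\mathbb{P}(\#\{\text{bad }i\}>\epsilon' n)$ that is small in $D$ but \emph{does not decay in $n$}. The statement you are asked to prove is an exponential-in-$n$ estimate at \emph{fixed} $D$, so a first-moment argument cannot finish it. One really does need either a decomposition of the walk into nearly independent blocks and a multiplicative estimate across blocks, or a large-deviations/concentration inequality tailored to the fact that the events $\{\omega_i\text{ far from }\gamma\}$ depend on the whole path. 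You name these possibilities but do not carry either one out; naming the missing lemma is not the same as proving it. Until that deviation estimate is supplied, the proposal is an outline, not a proof, and in particular it would not be acceptable as a replacement for the citation to \cite{CM}.
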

Lemma \ref{lem.proximal} shows that coarsely, any random walk fellow travels with a geodesic connecting the endpoints with
exponentially high probability.

We also recall the work of Maher which shows that each shadow is exponentially small.
\begin{lem}[\cite{Mah2012}]\label{lem.shadow}
There are constants $K > 0$ and $c<1$ such that for any $q \in \mathcal{C}(S)$ and 
any $r$,
$$\mathbb{P}(\omega_n \in S_{1} (q,r))< Kc^r.$$
\end{lem}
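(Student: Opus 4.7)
The plan is to prove Lemma \ref{lem.shadow} by a nested-shadow argument, which is by now standard for random walks on Gromov-hyperbolic spaces. The first observation is a reduction to a hitting probability: if $\omega_n\in S_1(q,r)$, then the first entry time $T_r:=\inf\{k\geq 1:\omega_k\in S_1(q,r)\}$ satisfies $T_r\leq n<\infty$, so $\mathbb{P}(\omega_n\in S_1(q,r))\leq \mathbb{P}(T_r<\infty)$, and it suffices to bound the latter by $Kc^r$.

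The heart of the argument would be a \emph{descent estimate} for nested shadows. Since $\mathcal{C}(S)$ is $\delta$-hyperbolic, the family $\{S_1(q,k)\}_{k\geq 0}$ is nested up to a bounded additive error, and passing from $S_1(q,k)$ to $S_1(q,k+C)$ coarsely corresponds (by Remark \ref{rmk.shadow}) to travelling an additional distance $\approx C$ towards $q$ past the point $q_k\in[1,q]$ at distance $k$ from the basepoint. I would show that there exist constants $C>0$ and $c_0<1$, depending only on $\mu$ and $\delta$, such that for every $q$ and every $k$,
$$\mathbb{P}(T_{k+C}<\infty\mid\mathcal{F}_{T_k})\leq c_0 \quad\text{on}\ \{T_k<\infty\}.$$
Granted this, applying the strong Markov property and iterating through depths $0,C,2C,\ldots$ yields $\mathbb{P}(T_r<\infty)\leq c_0^{\lfloor r/C\rfloor}$, which is precisely the claimed exponential decay in $r$.

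To establish the descent estimate, I would apply the strong Markov property at time $T_k$ and work with the fresh walk starting from $\omega_{T_k}$. Reaching $S_1(q,k+C)$ requires that this fresh walk pass through a bounded neighborhood of $q_{k+C}$. The non-elementary hypothesis on $\mathrm{supp}(\mu)$ supplies two pseudo-Anosov elements in $\mathrm{sgr}(\mu)$ with disjoint fixed points in $\mathcal{PMF}(S)$, and a ping-pong argument then provides, in some bounded convolution power $\mu^{*N}$, an element that deflects the walk away from the direction of $q$ in $\mathcal{C}(S)$. Combined with the linear progress encoded in Lemma \ref{lem.proximal}, this gives a positive lower bound on the probability that the walk escapes to the Gromov boundary along a direction whose associated geodesic from the basepoint avoids the relevant bounded neighborhood of $q_{k+C}$ forever, hence a strict upper bound $c_0<1$ on the conditional re-entry probability.

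The main obstacle is securing \emph{uniformity} of $c_0$ in both $q$ and $k$; the ping-pong escape probability must be bounded below independently of which shadow is under scrutiny. This uniformity rests on the equivariance of shadows under the isometric action of $G$ on $\mathcal{C}(S)$ together with the quasi-isometric rigidity of the hyperbolic geometry: up to the action of $G$ and up to bounded error, the configuration $(1,q,\omega_{T_k})$ can be normalized so that the deflecting element and the resulting escape estimate are drawn from a finite, $q$- and $k$-independent supply. Once that uniformity is in place, the iteration in the descent estimate goes through and yields the stated bound with constants depending only on $\mu$.
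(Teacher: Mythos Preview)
The paper does not prove this lemma; it is quoted directly from Maher \cite{Mah2012}, so there is no in-paper proof to compare against. That said, the paper does reproduce Maher's abstract nested-shadow machinery later as Lemma~\ref{lem.Mahkey} and applies it in Lemma~\ref{lem.key} to the set $gE$ in place of a single point $q$, so one can compare your sketch to that template.

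Maher's actual argument routes through the $\mu$-stationary measure $\nu$: one checks that the nested family $X_n=S_1(q,L(n+1))$ satisfies hypotheses (1)--(5) of Lemma~\ref{lem.Mahkey}, which yields $\nu(X_n)<c^n$ and then $\mathbb{P}(\omega_i\in X_n)<Kc^n$. The key uniform input is condition~(4), namely $\nu_x(X_{n+2})\leq 1-\epsilon$, which after translating by $x^{-1}$ becomes $\nu(S_1(x^{-1}q,L-4K_1))<1-\epsilon$ and follows from Lemma~\ref{lem.pointshadowdecays}. Your proposal instead runs a direct hitting-time iteration with a conditional descent estimate, bypassing $\nu$. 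This is a legitimate alternative (closer to the style of Maher--Tiozzo \cite{MT}), but as written the uniformity step is the soft spot: the $G$-action on $\mathcal{C}(S)$ is not cocompact, so ``normalizing the configuration $(1,q,\omega_{T_k})$ up to bounded error and the $G$-action'' does not obviously reduce to a finite supply of configurations. What makes Maher's version go through is precisely that the relevant quantity at each stage is a \emph{harmonic-measure} value $\nu_x(\cdot)$, which is $G$-equivariant on the nose; your ping-pong deflection argument can be packaged to give the same bound, but you should phrase it as a uniform lower bound on $\nu_x$ of the complementary half-space (equivalently, invoke Lemma~\ref{lem.pointshadowdecays}) rather than attempt a geometric normalization. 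Also, invoking Lemma~\ref{lem.proximal} here for ``linear progress'' risks circularity, since Calegari--Maher's proof of that lemma already draws on the shadow estimates of \cite{Mah2012}.
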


Throughout in this section, we suppose that a path in $\mathcal{C}(S)$ is a continuous map $[0,1]\map\mathcal{C}(S)$.
Hence for a given path $\gamma$, $\gamma(0)$ denotes the initial point and $\gamma(1)$ denotes the terminal point.
Two paths $\gamma_1$ and $\gamma_2$ are said to be {\em $D$-aligned} (resp. {\em $D$-anti-aligned}) if 
there exists $h\in G$ such that $d_\mathcal{C}(h\gamma_1(0),\gamma_2(0))<D$ and 
$d_\mathcal{C}(h\gamma_1(1),\gamma_2(1))<D$ 
(resp. $d_\mathcal{C}(h\gamma_1(1),\gamma_2(0))<D$ and $d_\mathcal{C}(h\gamma_1(0),\gamma_2(1))<D$).
Lemma \ref{lem.align} below looks quite similar to {\cite[Lemma 5.26]{CM}}
showing the probability that a random walk has two anti-aligned subpaths decays polynomially.
Lemma \ref{lem.align} shows the probability that a random walk has aligned subpaths decays exponentially.
The order of the decay is exponential since we consider the case that a random walk has aligned subpaths of length 
of linear order (see property (1) of Lemma \ref{lem.align}) while in \cite{CM}, the order was of logarithm.
Although one can prove Lemma \ref{lem.align} by almost the same argument as in \cite{CM}, 
we include a proof for completeness.
Recall that by the work of Bowditch \cite{Bow}, the action of $G$ on $\mathcal{C}(S)$ is {\em acylindrical};
for any $C_1 > 0$, there are constants
$C_2,C_3$ such that for $a,b\in \mathcal{C}(S)$ with $d_{\mathcal{C}}(a,b)\geq C_2$,
there are at most $C_3$ elements $h\in G$ with $d_{\mathcal{C}}(a, ha)\leq C_1$
and $d_{\mathcal{C}}(b, hb)\leq C_1$.
\begin{lem}[c.f. {\cite[Lemma 5.26]{CM}}]\label{lem.align}
Fix $D, M>0$.
Then there is a constant $c_1<1$, $K>0$
such that the following holds. 
Consider the collection of indices $a < a' < b < c < c' < d$
 for which there are geodesics $\alpha\in[\omega_a,\omega_b]$ and $\beta\in[\omega_c,\omega_d]$ with the following properties:
\begin{enumerate}
\item $\mathrm{length}(\alpha)\geq Mn$ and similarly for $\beta$;
\item  there is $t\in[0.1,0.2]$ so that $d_{\mathcal{C}}(\omega_{a'}, \alpha(t))\leq D$
 and $d_{\mathcal{C}}(\omega_{c'}, \beta(t))\leq D$;
\item there is some $h \in G$ so that $d_{\mathcal{C}} (h\alpha(0), \beta(0)) \leq D$, and
$d_{\mathcal{C}} (h\alpha(1), \beta(1)) \leq D$.
\end{enumerate}
The probability that this collection of indices is non-empty is at most $Kc_1^n$.
\end{lem}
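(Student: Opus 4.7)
The plan is to adapt the strategy that Calegari--Maher employ for their Lemma 5.26 in \cite{CM}, which handles the anti-aligned case. The overall skeleton is the same; the crucial quantitative difference is that here both geodesics $\alpha, \beta$ have length at least $Mn$, i.e., linear in $n$, whereas in \cite{CM} the matched segments had length of logarithmic order, and this linear lower bound is precisely what upgrades the rate from polynomial to exponential.

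First I take a union bound over the $O(n^6)$ possible ordered index tuples $(a, a', b, c, c', d)$. It then suffices to exhibit a single $\lambda<1$ and $K'>0$, independent of the tuple, such that for each fixed tuple the probability that (1)--(3) hold with those indices is at most $K'\lambda^n$; choosing $c_1\in(\lambda, 1)$ absorbs the $n^6$ polynomial factor for $n$ large.

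With the tuple fixed, the disjointness $b<c$ makes the two sub-walks $\omega_a\to\omega_b$ and $\omega_c\to\omega_d$ independent after translation, since they are determined by disjoint blocks of i.i.d.\ increments. Conditioning on the first sub-walk makes $\omega_a$, $\omega_{a'}$, $\omega_b$, and a choice of geodesic $\alpha=[\omega_a,\omega_b]$ deterministic. Acylindricity of Bowditch enters next: because $d_{\mathcal{C}(S)}(\omega_a,\omega_b)\geq Mn\geq C_2$ for $n$ large, once a single aligning $h_0$ exists, every other aligning element differs from $h_0$ by a near-stabilizer of the pair $(\omega_a,\omega_b)$, so at most $C_3$ elements $h$ can satisfy condition (3). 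This caps the multiplicity of the existential quantifier in (3). For each such candidate $h$, condition (3) forces $\omega_c$ into the $D$-ball of $h\omega_a$ and $\omega_d$ into the $D$-ball of $h\omega_b$; by hyperbolicity together with Remark \ref{rmk.shadow}, the fact that $d_{\mathcal{C}(S)}(h\omega_a, h\omega_b)\geq Mn$ places $\omega_d$ (seen from $\omega_c$) in a shadow whose defining parameter $r$ grows linearly in $n$. Lemma \ref{lem.shadow} then bounds the probability of the second sub-walk landing there by $Kc^r\leq K\tilde c^{n}$ for some $\tilde c<1$.

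The main obstacle I anticipate is controlling the quantifier on $h$: a priori $h$ ranges over the infinite group $G$, so one cannot naively sum the shadow bound over every $h\in G$. The workaround, again in the spirit of \cite{CM}, is to use Lemma \ref{lem.shadow} a second time to first restrict $\omega_c$ (and hence $h\omega_a$, up to a bounded ball) to an exponentially concentrated region determined by the first sub-walk, which cuts the candidate $h$ down to a family of size at most polynomial in $n$; acylindricity then pins the multiplicity within that family to the uniform constant $C_3$. Propagating these estimates uniformly over the union bound in Step 1 gives the desired $Kc_1^n$ bound, at the cost of slightly enlarging the exponential rate from $\lambda$ to $c_1$.
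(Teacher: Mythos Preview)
Your skeleton (union bound, acylindricity, shadow lemma) is right, and you correctly flag the obstacle: the existential quantifier on $h$ ranges over all of $G$. But your proposed workaround does not close the gap. Conditioning only on the first sub-walk leaves $\omega_c,\omega_{c'},\omega_d$ all random, and acylindricity on the pair $(\omega_a,\omega_b)$ tells you nothing until the \emph{targets} are fixed: the statement ``at most $C_3$ elements $h$ satisfy (3)'' is only meaningful for a fixed $(\omega_c,\omega_d)$, and those $C_3$ elements change with the realization, so you cannot sum the shadow bound over them. Your fallback --- using Lemma~\ref{lem.shadow} to ``restrict $\omega_c$ to an exponentially concentrated region determined by the first sub-walk'' --- does not work either: Lemma~\ref{lem.shadow} gives upper bounds on the probability of landing in a \emph{given} shadow, not a concentration inequality, and the increments between times $b$ and $c$ are independent of the first sub-walk, so $\omega_c$ has no a priori relation to $\omega_a,\omega_b$ at all. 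There is no mechanism here producing ``polynomially many'' candidate $h$'s.

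The missing idea is precisely the role of the intermediate indices $a',c'$ and condition~(2), which your argument never uses. Condition on the walk up to time $c'$ (not just $b$), so that $\omega_a,\omega_{a'},\omega_b,\omega_c,\omega_{c'}$ are all deterministic and only the continuation to $\omega_d$ is random. If $h$ satisfies~(3) then $d_\mathcal{C}(h\omega_a,\omega_c)\le D$, and combining~(2), (3) and $\delta$-hyperbolicity gives $d_\mathcal{C}(h\omega_{a'},\omega_{c'})\le C_1$ for some $C_1=C_1(D,\delta)$. Since $d_\mathcal{C}(\omega_a,\omega_{a'})\ge 0.1\,Mn$ by~(1)--(2), Bowditch's acylindricity bounds the number of such $h$ by a constant $C_3$ depending only on $D,\delta$ --- with $\omega_c,\omega_{c'}$ already fixed, no further control on $h$ is needed. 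Each of these $\le C_3$ elements pins a target $h\omega_b$ for $\omega_d$, lying at distance $\ge 0.8\,Mn-O(1)$ from $\omega_{c'}$; by Remark~\ref{rmk.shadow} this forces $\omega_d\in S_{\omega_{c'}}(h\omega_b,r)$ with $r$ linear in $n$, and Lemma~\ref{lem.shadow} applied to the fresh walk started at $\omega_{c'}$ gives the exponential bound. The union bound is then over the $O(n^5)$ choices of $(a,a',b,c,c')$, which is absorbed into the exponential.
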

\begin{proof}
We first fix $a<a'<b<c<c'$.
To satisfy conditions (2) and (3), we need to have $h\in G$ such that $d_\mathcal{C}(h\omega_a, \omega_c)\leq C_1$ and 
$d_\mathcal{C}(h\omega_{a'}, \omega_{c'})\leq C_1$ for some constant $C_1$ depending only on $D$ and the hyperbolicity constant $\delta$.
Hence, the acylindricity of the action of $G$ on $\mathcal{C}(S)$ implies that if $\alpha = [\omega_a,\omega_b]$ is long enough,
there is a set $A\subset \mathcal{C}(S)$ of at most $C_3$ points so that
$\omega_d$ should be in $D$ neighborhood of some point $x\in A$ where $C_3$ depends only on $D$ and $\delta$.
By Remark \ref{rmk.shadow}, it follows that $\omega_d\in S_{\omega_{c'}}(x, d_\mathcal{C}(\omega_{c'}, x)-C)$
for some $C$ depending only on $\delta$.
Then by Lemma \ref{lem.shadow}, 
the probability that a random walk from $\omega_{c'}$ is in $S_{\omega_{c'}}(x, d_\mathcal{C}(\omega_{c'}, x)-C))$ 
decays exponentially since $d_\mathcal{C}(\omega_{c'}, x)$ is at least $8Mn/10$ by the conditions (1) and (2).
Since the number of elements of $A$ is universally bounded, the probability
that $a<a'<b<c<c'$ will be followed by some $d$ which satisfies (1)-(3) is less than $K'c_2^n$ for some $K'>0$ and $c_2<1$ 
which depend only on $D, \delta$ and $M$ but not
on $n$ and $a<a'<b<c<c'$.
The number of all possible choices of $a<a'<b<c<c'$ is of order $n^5$.
We may find some $K>0$ and $c_1<1$ such that $n^5K'c_2^n<Kc_1^n$.
Thus we complete the proof.
\end{proof}
\begin{rmk}\label{rmk.anti-align}
As shown in \cite{CM}, almost the same argument shows anti-aligned version of Lemma \ref{lem.align}. 
Namely, we may replace the conditions (2) and (3) of Lemma \ref{lem.align} with
\begin{enumerate}
\item[(2)']
	there is $t\in[0.1,0.2]$ so that $d_{\mathcal{C}}(\omega_{a'}, \alpha(1-t))\leq D$
	and $d_{\mathcal{C}}(\omega_{c'}, \beta(t))\leq D$,
 \item[(3)'] 
 	there is some $h \in G$ so that $d_{\mathcal{C}} (h\alpha(0), \beta(1)) \leq D$, and
	$d_{\mathcal{C}} (h\alpha(1), \beta(0)) \leq D$,
\end{enumerate}
to have the probability that we have indices satisfying (1), (2)' and (3)' decays exponentially.
\end{rmk}
\subsection{Proof of Theorem \ref{thm.primitive}.}
For $g\in G$, let $\tau(g)$ denote  the {\em translation length} 
$$\tau(g) := \lim_{n\to\infty} \frac{d_{\mathcal{C}}(g^n(p),p)}{n}$$
of $g$ on the curve graph $\mathcal{C}(S)$.
Maher-Tiozzo proved that the translation length grows linearly \cite{MT}.
\begin{lem}[\cite{MT}]\label{lem.translen}
There exists $L>0$, $K>0$ and $c<1$ which only depends on $S$ and $\mu$ such that
$$\mathbb{P}(\tau(\omega_n)< Ln)<Kc^n.$$
\end{lem}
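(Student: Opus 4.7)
The plan is to combine Maher's exponential linear progress bound on displacement with a standard coarse-hyperbolic estimate that expresses the translation length as displacement minus a Gromov product at the basepoint, and then to control the Gromov product probabilistically using the alignment machinery already developed.

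First, I would record Maher's exponential linear progress estimate \cite{Mah2012}: there are constants $A>0$, $K_1>0$, $c_1<1$ such that
$$\mathbb{P}\bigl(d_{\mathcal{C}}(p,\omega_n p) < An\bigr) < K_1 c_1^n.$$
Second, I would invoke the classical coarse-hyperbolic inequality valid for any isometry $g$ of a $\delta$-hyperbolic space and basepoint $p$,
$$\tau(g) \geq d_{\mathcal{C}}(p,gp) - 2(g^{-1}p \cdot gp)_p - C_0\delta,$$
for a universal constant $C_0$. The geometric content is that when the Gromov product at $p$ is uniformly small, the bi-infinite orbit $\{g^k p\}_{k \in \mathbb{Z}}$ is a quasi-geodesic whose asymptotic step length is at least $d_{\mathcal{C}}(p,gp) - 2(g^{-1}p \cdot gp)_p$, and this asymptotic step length is exactly $\tau(g)$.

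The heart of the matter is the third step: showing that for some fixed $D>0$,
$$\mathbb{P}\bigl((\omega_n^{-1}p \cdot \omega_n p)_p > D\bigr) < K_2 c_2^n$$
with $K_2>0$, $c_2<1$. Using $d_{\mathcal{C}}(\omega_n^{-1}p,\omega_n p) = d_{\mathcal{C}}(p,\omega_n^2 p)$, this Gromov product equals
$$d_{\mathcal{C}}(p,\omega_n p) - \tfrac12 d_{\mathcal{C}}(p,\omega_n^2 p),$$
so what we need is the ``no-backtracking'' estimate $d_{\mathcal{C}}(p,\omega_n^2 p) \geq 2d_{\mathcal{C}}(p,\omega_n p) - 2D$. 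The subtlety is that $\omega_n^2 = \omega_n \cdot \omega_n$ is deterministic from $\omega_n$, not a fresh walk of length $2n$, so Lemma \ref{lem.shadow} does not apply directly to $\omega_n^2$. I would instead consider the concatenated trajectory $p,\omega_1 p,\dots,\omega_n p, \omega_n\omega_1 p,\dots,\omega_n^2 p$, i.e.\ the random walk followed by its $\omega_n$-translate. By Lemma \ref{lem.proximal} each half fellow-travels its endpoint geodesic with exponentially high probability; a large value of $(\omega_n^{-1}p \cdot \omega_n p)_p$ forces the tail of the first half and the prefix of its $\omega_n$-translate to anti-align at $\omega_n p$. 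The anti-aligned version of Lemma \ref{lem.align} (Remark \ref{rmk.anti-align}), whose proof rests on acylindricity and the shadow lemma in the same way as Lemma \ref{lem.align} itself, precludes this event except on an exponentially small set.

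Combining the three ingredients: with probability at least $1 - (K_1 c_1^n + K_2 c_2^n)$ we have
$$\tau(\omega_n) \geq An - 2D - C_0\delta,$$
which exceeds $Ln$ for any fixed $L<A$ and all sufficiently large $n$; the finitely many small $n$ are absorbed by enlarging $K$. The main obstacle is the anti-alignment step: the two subpaths that must be prevented from anti-aligning both involve $\omega_n$ itself (once as its tail, once through its translate), so the indices $a<a'<b<c<c'<d$ in Lemma \ref{lem.align} must be chosen so that conditionally on the alignment event the remaining increments of the random walk still contribute genuine probabilistic freedom, and the acylindricity-plus-shadow argument of Lemma \ref{lem.align} truly delivers the exponential decay.
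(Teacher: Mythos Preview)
The paper does not prove this lemma at all; it is quoted verbatim from Maher--Tiozzo \cite{MT} and used as a black box. So there is no ``paper's own proof'' for you to match.

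Your outline is a reasonable route to an independent proof, but step 3 as stated has a quantitative mismatch. You ask that $(\omega_n^{-1}p\cdot\omega_np)_p$ exceed a \emph{fixed} constant $D$ only on an exponentially small set, and you propose to deduce this from the anti-aligned version of Lemma~\ref{lem.align} (Remark~\ref{rmk.anti-align}). But that lemma requires the two segments $\alpha,\beta$ to have length at least $Mn$: this is exactly what gives the shadow in its proof depth of order $n$, hence exponentially small mass. When the Gromov product is only assumed larger than a fixed $D$, the initial and terminal pieces of $[\omega_0,\omega_n]$ that anti-align via $h=\omega_n$ have length of order $D$, so the shadow has constant depth; the bound coming out of Lemma~\ref{lem.shadow} is then a fixed constant less than $1$, and after summing over the polynomially many index choices you get no decay at all.

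The repair is straightforward and does not change your architecture: replace the fixed threshold $D$ by $\epsilon n$ for some $0<\epsilon<A/2$. Then the anti-aligned segments have length of order $\epsilon n$, Lemma~\ref{lem.align} genuinely applies with $M$ a constant multiple of $\epsilon$, and you obtain $\mathbb{P}\bigl((\omega_n^{-1}p\cdot\omega_np)_p>\epsilon n\bigr)<K_2c_2^{\,n}$. On the complementary event your step 2 inequality gives $\tau(\omega_n)\geq (A-2\epsilon)n-C_0\delta$, which still exceeds $Ln$ for any $L<A-2\epsilon$ once $n$ is large. Your final caveat about circularity is handled exactly as you suggest: although the aligning element is $h=\omega_n$, the proof of Lemma~\ref{lem.align} only uses that acylindricity pins $h$ down to boundedly many candidates from the data at indices $a,a',b,c,c'$, after which $\omega_d=\omega_n$ is forced into a deep shadow determined by those earlier indices, and the remaining increments supply the randomness.
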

We first prepare an elementary observation for an action of a group on a $\delta$-hyperbolic space.
\begin{prop}[c.f. {\cite[Lemma 3.3]{Mah2011}}]\label{prop.fellowtravel}
Let $H$ be a group acting isometrically on a $\delta$-hyperbolic space $(Y,d_Y)$ with a base point $x$.
Fix $h\in H$.
Suppose that 
$h$ has a geodesic axis $\alpha$, i.e. a geodesic satisfying $h^n(\alpha)\subset \mathcal{N}_{2\delta}(\alpha)$ 
for all $n\in\mathbb{Z}$ where
$\mathcal{N}_{2\delta}(\alpha)$ denotes the $2\delta$ neighborhood of $\alpha$.
Let $q$ be a nearest point projection of $x$ to $\alpha$.
If $d_Y(q,hq)> 28\delta$, the following holds.
There exist $D_1,D_2\geq 0$ which depend only on $\delta$ such that the geodesic $\gamma = [x,hx]$ can be decomposed into three subsegments $\gamma = \gamma_1\gamma_2\gamma_3$
so that
\begin{itemize}
\item The distance $d_{\mathcal{C}}(\gamma_1(1), q)\leq D_1$ and $d_{\mathcal{C}}(\gamma_3(0), hq)\leq D_1$, and
\item $\gamma_2\subset \mathcal{N}_{D_2}(\alpha)$ and $\mathrm{length}(\gamma_2)\geq d_Y(q,hq)-28\delta$.
\end{itemize}
\end{prop}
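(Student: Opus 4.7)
The plan is to combine three standard ingredients from $\delta$-hyperbolic geometry: the equivariance of nearest-point projection, the classical projection lemma for points with far-apart shadows on a geodesic, and a slim-quadrilateral fellow-traveling argument for the middle segment.

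First I would use equivariance to identify $hq$ as a nearest-point projection of $hx$ onto the translated axis $h(\alpha)$. Since the hypothesis $h(\alpha) \subset \mathcal{N}_{2\delta}(\alpha)$ says that $h(\alpha)$ stays uniformly close to $\alpha$, and since nearest-point projections to a geodesic in a $\delta$-hyperbolic space are coarsely well-defined, $hq$ lies within a bounded (universal, depending only on $\delta$) distance of an actual nearest-point projection of $hx$ onto $\alpha$. This reduces the problem to the following purely geometric situation: two points $x, hx$ project onto the geodesic $\alpha$ at coarse locations $q$ and $hq$ with $d_Y(q, hq) > 28\delta$.

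Next I would invoke the standard projection lemma: in a $\delta$-hyperbolic space, if two points $p_1, p_2$ project to points $\pi_1, \pi_2$ on a geodesic $\alpha$ with $d_Y(\pi_1, \pi_2)$ larger than an explicit constant $C(\delta)$, then every geodesic $[p_1, p_2]$ enters the $D_1$-neighborhood of $\pi_1$ and of $\pi_2$ for some $D_1 = D_1(\delta)$ (the usual ``$14\delta$'' figure serves, and the choice $28\delta = 2D_1$ in the hypothesis is exactly what makes the resulting length estimate nonempty). Applied to $p_1 = x$, $p_2 = hx$, this produces a point $\gamma_1(1) \in \gamma$ with $d_Y(\gamma_1(1), q) \le D_1$ and a later point $\gamma_3(0) \in \gamma$ with $d_Y(\gamma_3(0), hq) \le D_1$. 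I then set $\gamma_1 := \gamma|_{[x,\gamma_1(1)]}$, $\gamma_3 := \gamma|_{[\gamma_3(0),hx]}$, and $\gamma_2 := \gamma|_{[\gamma_1(1),\gamma_3(0)]}$.

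For the middle piece $\gamma_2$ I would look at the geodesic quadrilateral with sides $\gamma_2$, $[\gamma_1(1), q]$, $[q, hq] \subset \alpha$, and $[hq, \gamma_3(0)]$; two of its sides have length at most $D_1$. By iterating the thin-triangle condition, such a quadrilateral is $2\delta$-slim, so each point of $\gamma_2$ lies within $2\delta$ of one of the other three sides, hence within $D_2 := 2\delta + D_1$ of $\alpha$. The length bound follows from the triangle inequality: $\mathrm{length}(\gamma_2) \ge d_Y(q, hq) - d_Y(q, \gamma_1(1)) - d_Y(\gamma_3(0), hq) \ge d_Y(q, hq) - 2D_1 = d_Y(q, hq) - 28\delta$. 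The only mildly delicate point is the bookkeeping in step one, where one must check that replacing the nearest-point projection of $hx$ to $h(\alpha)$ by the projection to $\alpha$ costs only an additive $O(\delta)$ that can be absorbed into the constants $D_1, D_2$; everything else is standard.
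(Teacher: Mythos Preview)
Your argument is correct and follows essentially the same thin-quadrilateral strategy as the paper: both proofs work with the configuration $x, q, hq, hx$ and use that $q$ (and, up to $O(\delta)$, $hq$) are nearest-point projections to $\alpha$ to force the middle of $\gamma=[x,hx]$ to fellow-travel $\alpha$; the paper carries this out by hand --- projecting interior points of $[q,hq]$ back onto $\gamma$ and tracking constants explicitly to obtain $D_1=10\delta$, $D_2=3\delta$ --- whereas you package the first step as a citation to the standard projection lemma and then run a second slim-quadrilateral argument on $\gamma_1(1),q,hq,\gamma_3(0)$. One minor slip to fix: you write $[q,hq]\subset\alpha$, but $hq\in h(\alpha)\subset\mathcal{N}_{2\delta}(\alpha)$ need not lie on $\alpha$ itself, so $[q,hq]$ is only contained in $\mathcal{N}_{O(\delta)}(\alpha)$ and your $D_2$ must absorb an extra additive $O(\delta)$ --- this is precisely the bookkeeping you already flagged as ``mildly delicate'' in step one, and it causes no trouble.
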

\begin{proof}
Any side of a geodesic quadrilateral in a $\delta$-hyperbolic space is in the $2\delta$ neighborhood of the other three sides.
We consider a geodesic quadrilateral whose vertices are $x, q, hq, hx$.
Since $q, hq$ are nearest point projections, if a point $s\in [q,hq]$ is at least $4\delta$ apart from $q$ and $hq$,
then $d_Y(s,\gamma)\leq 2\delta$.
This is because if $d_Y(s,\gamma)> 2\delta$, then there must be $s'\in [x,q] \cup [hq,hx]$
such that $d_Y(s,s')\leq 2\delta$, which contradicts the fact that $q$ and $hq$ are nearest point projections to $\alpha$.
Let $q_1$ (resp. $q_2$) denote the point on $[q,hq]$ that is exactly $4\delta$ apart from $q$ (resp. $hq$).
Let $x_1'$ (resp. $x_2'$) be a nearest point projection to $\gamma$ of the point $q_1$ (resp. $q_2$).
Then $d_Y(x_i',q)\leq 6\delta$ for $i = 1,2$.
By $\delta$-hyperbolicity, if a point $a\in[x_1',x_2']$ is at least $4\delta$ away from both $x_1'$ and $x_2'$, then 
$d_Y(a,[q,hq])\leq 2\delta$.
Let $x_1$ (resp. $x_2$) denote the point on $[x_1',x_2']$ exactly $4\delta$ away from $x_1'$ (resp. $x_2'$).
Put $\gamma_1:=[x,x_1]$, $\gamma_2 := [x_1,x_2]$ and $\gamma_3 := [x_2, hx]$.
Note that $d_Y(x_i,q)\leq 10\delta$ for $i=1,2$, so we put $D_1 := 10\delta$.
By $\delta$-hyperbolicity, except for the $3\delta$ neighborhood of $hq$, points on $[q,hq]$ is in the $\delta$ neighborhood
of $\alpha$.
Hence by putting $D_2:=3\delta$, we have $\gamma_2\subset\mathcal{N}_{D_2}(\alpha)$.
Let $q_1',q_2'$ be nearest point projections of $x_1,x_2$ to $[q,hq]$ respectively.
Then $d_Y(q,q_1')\leq d_Y(q,q_1) + d_Y(q_1,x_1') + d_Y(x_1',x_1) + 2\delta\leq 12\delta$.
By symmetry we have $d_Y(q_2',hq)\leq 12\delta$.
By triangle inequality, we have
we have $\mathrm{length}(\gamma_2)\geq d_Y(q_1',q_2')-d_Y(x_1,q_1')-d_Y(x_2,q_2')\geq d_Y(q,hq)-28\delta$.
Thus we have a required decomposition.
\end{proof}
We are now in a position to prove Theorem \ref{thm.primitive}.
\begin{proof}[Proof of Theorem \ref{thm.primitive}]
Suppose $\omega_n = \phi^k$ for some $\phi\in G$ and $k>1$.
Let $\eta$ be a geodesic axis of $\phi$, and $\gamma = [\omega_0,\omega_n]$.
By Lemma \ref{lem.translen}, $\gamma_2$ of the decomposition of $\gamma = \gamma_1\gamma_2\gamma_3$
from Proposition \ref{prop.fellowtravel} has length at least $Ln$ for some $L>0$ with exponentially high probability.
Let $L':= \mathrm{length}(\gamma_2)$.
Then by applying Lemma \ref{lem.proximal} for small enough $\epsilon$, say $1/100$, 
we may find $D'>0$ such that  
$\mathrm{length}(\gamma_{D'})/\mathrm{length}(\gamma) \geq 1-\epsilon$
with exponentially high probability.
Then we can find a $D'$-proximal point
$q_a\in \gamma_2$ such that $d_\mathcal{C}(q_a, \gamma_2(0))\leq L'\epsilon$.
Let $a$ denote the index that $d_\mathcal{C}(\omega_a,q_a)\leq D'$.
Similarly we can find a point $q_b\in\gamma_2$ such that 
\begin{itemize}
\item $\frac{Ln}{4}\leq d_\mathcal{C}(q_a, q_b)\leq \frac{Ln}{4} + L'\epsilon$,
\item $q_b$ is $D'$-proximal so that $d_\mathcal{C}(\omega_b,q_b)\leq D'$ for $a<b$.
\end{itemize}
We consider translating $[q_a,q_b]\subset \gamma_2$ by $\varphi:=\phi^{\lfloor k/2 \rfloor}$ 
where $\lfloor k/2 \rfloor$ is the largest integer among all integers smaller than $k/2$.
Note that 
$$\frac{\tau(\omega_n)}{3}\leq\tau(\varphi)\leq \frac{\tau(\omega_n)}{2}.$$
By perturbing at most $L'\epsilon$ if necessary, we may assume that 
both $\varphi(\omega_a)$ and $\varphi(\omega_b)$ are within at most $2D_2+2\delta$ distance 
from $D'$-proximal points $q_c, q_d\in\gamma_2$ respectively.
The constant $D_2$ is from Proposition \ref{prop.fellowtravel}.
Hence there exist indices $c,d$ with $a<b<c<d$ such that $d_\mathcal{C}(\omega_i, q_i)\leq D'+2D_2+2\delta$ for $i\in\{a,b,c,d\}$.
Let $\alpha := [\omega_a,\omega_b]$ and $\beta:=[\omega_c,\omega_d]$.
By $\delta$-hyperbolicity, we can decompose $\alpha = \alpha_1\alpha_2\alpha_3$ so that 
$\mathrm{length}(\alpha_1),\mathrm{length}(\alpha_3)<D'+2D_2+4\delta$ and
$\alpha_2\subset \mathcal{N}_{2\delta}(\gamma)$. 
Hence if $n$ is large enough, then for some $t\in[0.1,0.2]$
we can find a $D'$-proximal point
 $q_{a'}\in\gamma_2$ with $d_\mathcal{C}(q_{a'},\alpha(t))\leq 2\delta$.
Similarly, we can also find a $D'$-proximal point $q_{c'}$ such that $d_\mathcal{C}(q_{c'},\beta(t))\leq 2\delta$.
Thus we have indices $a'$ and $c'$ such that $d_\mathcal{C}(\omega_{a'},\alpha(t))\leq D'+2\delta$ and 
$d_\mathcal{C}(\omega_{c'},\beta(t))<D'+2\delta$.
Thus if $\omega_n$ is not primitive we can find indices satisfying conditions (1)-(3) of Lemma \ref{lem.align} for
$M = L/4$ and $D = D' + 2D_2 + 2\delta$.
Therefore the probability that $\omega_n$ is not primitive decays exponentially.
\end{proof}

\section{Random mapping classes are not symmetric}\label{sec.notlift}
The goal in this section is to prove Theorem \ref{thm.notlift}.
We fix a (possibly orbifold) finite covering $\pi:S\map S'$.
A simple closed curve $a\in\mathcal{C}(S)$ is called {\em symmetric} if $\pi(a)$ is also a simple closed curve on $S'$ (see \S \ref{sec.parallel} for more detail).
The first step is to show the exponential decay of the shadow of the set of symmetric curves in $\mathcal{C}(S)$.
To show the exponential decay, we prepare two lemmas (Lemma \ref{lem.notlift} and Lemma \ref{lem.finiteparallel}) 
in \S\ref{sec.zero} and \S\ref{sec.parallel}.
Then \S\ref{sec.exponentialdecay} will be devoted to the proof of the exponential decay.
Finally we prove Theorem \ref{thm.notlift} in \S\ref{sec.proof of 1.3}.

\subsection{Set of symmetric projective measured foliations has $\mu$-stationary measure zero}\label{sec.zero}

Let $\mu$ be a probability measure on the mapping class group $G$ of surface $S$ of finite type.
In this section, we suppose that $\mu$ satisfies Condition \ref{condi.munotlift}.
A measure $\nu$ on $\mathcal{PMF}(S)$ is called $\mu$-stationary if $$\nu(X) = \sum_{g\in G}\mu(g)\nu(g^{-1}X)$$
for any measurable subset $X\subset\mathcal{PMF}(S)$.
We first recall the work of Kaimanovich-Masur.
Recall that a projective measured foliation is said to be {\em uniquely ergodic} if its supporting foliation admits 
only one transverse measure up to scale.
We denote by  $\mathcal{UE}(S)\subset\mathcal{PMF}(S)$ the space of uniquely ergodic foliations with unique 
projective measures.
\begin{thm}[{\cite[Theorem 2.2.4(1)]{KM}}]\label{thm.u-stationary}
There exists a unique $\mu$-stationary probability measure $\nu$ on $\mathcal{PMF}(S)$.
The measure $\nu$ is non-atomic and concentrated on the set of uniquely ergodic foliations $\mathcal{UE}(S)$.
\end{thm}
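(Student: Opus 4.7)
The plan is to establish existence first, then derive the structural properties (concentration on $\mathcal{UE}(S)$, non-atomicity, and uniqueness) from the dynamics of the random walk. For existence, I would appeal to a standard fixed point argument: $\PMF(S)$ is compact, so the space of Borel probability measures on it is weak-$*$ compact and convex, and the averaging map $\nu \mapsto \mu * \nu := \sum_{g \in G} \mu(g)\, g_*\nu$ is continuous and affine on it. The Markov--Kakutani fixed point theorem then produces a fixed probability measure, which is by definition $\mu$-stationary.

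For concentration on uniquely ergodic foliations I would run the random walk and invoke the martingale convergence theorem. Fix any stationary $\nu$; then for $P$-a.e.\ sample path $\omega = (\omega_n)$ the pushforwards $(\omega_n)_* \nu$ converge weak-$*$ to some probability measure $\nu_\omega$, and $\nu = \int \nu_\omega \, dP(\omega)$. The geometric task is to identify the support of $\nu_\omega$ with a single foliation. Using the orbit map to Teichm\"uller space, one shows that the walk sublinearly tracks a Teichm\"uller geodesic ray and hence converges in the Thurston compactification to some $F(\omega) \in \PMF(S)$ almost surely. Because a long word $\omega_n$ acts on $\PMF(S)$ with North--South dynamics, contracting the complement of a small neighborhood of its attracting foliation, the measures $(\omega_n)_* \nu$ collapse onto $F(\omega)$, forcing $\nu_\omega = \delta_{F(\omega)}$. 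Masur's criterion then yields $F(\omega) \in \mathcal{UE}(S)$: finite support and non-elementarity give positive drift on Teichm\"uller space, and recurrence of the shadowing Teichm\"uller geodesic in moduli space implies unique ergodicity of its vertical foliation.

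Non-atomicity is a rigidity argument. If $\nu$ had atoms, let $m_0$ be the supremum of atomic masses. The set $A \subset \PMF(S)$ of atoms of mass exactly $m_0$ is finite (of cardinality at most $1/m_0$), and stationarity combined with the maximality of $m_0$ forces $g_* A = A$ for every $g \in \mathrm{supp}(\mu)$. Hence the group generated by $\mathrm{supp}(\mu)$ permutes a finite subset of $\PMF(S)$, contradicting non-elementarity, which provides two independent pseudo-Anosov elements whose attracting and repelling fixed foliations cannot all sit inside any such finite invariant set. Uniqueness then falls out of the decomposition $\nu = \int \delta_{F(\omega)} \, dP(\omega)$: the foliation $F(\omega)$ depends only on the sample path, not on the stationary measure we began with, so any two stationary measures coincide.

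The main obstacle is the concentration step, which is the real heart of the Kaimanovich--Masur argument. Upgrading ``sublinear tracking on Teichm\"uller space plus positive drift'' to ``$\nu_\omega$ is a Dirac mass at a uniquely ergodic foliation'' requires Masur's criterion tied to recurrence of the Teichm\"uller geodesic flow on moduli space, together with a careful North--South dynamics estimate for the $\mathrm{Mod}(S)$-action on $\PMF(S)$. The remaining ingredients (existence by a fixed point theorem, non-atomicity by a finite-orbit rigidity argument, and uniqueness from the Dirac decomposition) are comparatively routine once concentration is in place.
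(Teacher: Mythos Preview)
The paper does not give its own proof of this statement: it is quoted verbatim as Theorem~2.2.4(1) of Kaimanovich--Masur \cite{KM} and used as a black box. So there is no ``paper's own proof'' to compare your proposal against. Your outline is essentially a sketch of the original Kaimanovich--Masur argument (existence by compactness/Markov--Kakutani, the martingale decomposition $\nu = \int \nu_\omega\,dP$, identification of $\nu_\omega$ as a Dirac mass via convergence in the Thurston boundary and Masur's criterion, non-atomicity by the finite-orbit trick, and uniqueness from the Dirac decomposition), and as such it is a faithful summary of the strategy in \cite{KM}.

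One caution on the concentration step: you phrase the contraction as ``a long word $\omega_n$ acts on $\PMF(S)$ with North--South dynamics,'' but $\omega_n$ need not be pseudo-Anosov for every $n$, and even when it is, its axis need not be related to the ray the walk is tracking. The actual mechanism in \cite{KM} for collapsing $(\omega_n)_*\nu$ to a point mass goes through the geometry of the Teichm\"uller compactification (or equivalently through contraction properties of long Teichm\"uller displacements on $\PMF$), not through North--South dynamics of individual group elements. If you intend to reconstruct the proof rather than cite it, that is the place where your sketch would need to be tightened.
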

Similarly as simple closed curves, a projective measured foliation $\lambda$ is said to be {\em symmetric} if $\pi(\lambda)$ is also a projective measured foliation on $S'$.
In this subsection, we will measure by $\nu$ the set of symmetric projective measured foliations.

We now recall the Teichm\"uller space of $S$.
The Teichm\"uller space $\mathcal{T}(S)$ is the space of conformal structures on $S$.
In this paper we consider the Teichm\"uller metric on $\mathcal{T}(S)$;
$$d_\mathcal{T}(\sigma_{1},\sigma_{2})=\frac{1}{2}\log\inf_h K(h),~ (\sigma_{1},\sigma_{2}\in \mathcal{T}(S)),$$
where the infimum is taken over all quasi-conformal maps $h: \sigma_{1}\map \sigma_{2}$
homotopic to the identity, and $K(h)$ is the maximal dilatation of $h$.
Thurston (c.f.\cite{FM}) showed that $\PMF(S)$ compactifies $\mathcal{T}(S)$ so that the action of $G:=\mathrm{Mod}(S)$ extends
continuously.
This compactification is called the {\em Thurston compactification}.
Let $\bar{\mathcal{T}}(S)  := \mathcal{T}(S) \cup \PMF(S)$.

Note that our covering $\pi:S\map S'$ may be an orbifold covering.
If $S'$ is an orbifold, $\mathcal{PMF}(S')$ and $\mathcal{T}(S')$ are 
defined to be the ones
on the surface that we get by puncturing the orbifold points of $S'$.
The covering $\pi$ determines $\Pi:\bT(S')\map \bT(S)$ so that 
$X\in \Pi(\mathcal{T}(S'))$ if $\pi(X)\in\mathcal{T}(S)$, and
$\lambda\in \Pi(\PMF(S'))$ if $\pi(\lambda)\in\PMF(S')$.
As pointed out in \cite[Section 7]{RS}, $\Pi$ is an isometric embedding of $\mathcal{T}(S')$.
We may also extend the $\mu$-stationary measure $\nu$ in Theorem \ref{thm.u-stationary} to $\bT(S)$ by 
$\nu(A) = \nu(A\cap \PMF(S))$ for each subset $A\subset\bT(S)$.
Let $E_{\mathcal{T}}:=\Pi(\bT(S'))$.
Our goal in this subsection is the following lemma.
\begin{lem}\label{lem.notlift}
Let $\mu$ be a probability measure on $G$ which satisfies Condition \ref{condi.munotlift}, and
$\nu$ the $\mu$-stationary measure on $\bT(S)$ from Theorem \ref{thm.u-stationary}.
Then for any finite covering $\pi:S\map S'$, we have for all $g\in G$,
$$\nu(gE_{\mathcal{T}}) = 0.$$
\end{lem}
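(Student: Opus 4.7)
My plan is to combine the pseudo-Anosov element furnished by Condition \ref{condi.munotlift} with the non-atomicity of $\nu$ (Theorem \ref{thm.u-stationary}) and the $\mu$-stationarity identity to force $\nu(gE_{\mathcal{T}}) = 0$. Since $\nu$ is concentrated on $\PMF(S)$, writing $E := \Pi(\PMF(S'))$ gives $\nu(gE_{\mathcal{T}}) = \nu(gE)$, so it suffices to show $\nu(gE) = 0$. A preliminary observation is that $gE$ is itself the lift-foliation set of the \emph{conjugate} covering $\pi_g := \pi \circ g^{-1} : S \to S'$: indeed $\lambda \in gE$ iff $g^{-1}\lambda$ descends via $\pi$, iff $\lambda$ descends via $\pi_g$, so $gE = \Pi_g(\PMF(S'))$. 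Applying Condition \ref{condi.munotlift} to the conjugate covering $\pi_g$ then furnishes a pseudo-Anosov $\psi = \psi_g \in \mathrm{sgr}(\mu)$ whose fixed foliations $\mathcal{F}_s^\psi, \mathcal{F}_u^\psi$ both lie outside the closed set $gE$.

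The North--South dynamics of $\psi$ on the compact space $\PMF(S)$ send every $\lambda \ne \mathcal{F}_s^\psi$ to $\mathcal{F}_u^\psi \notin gE$, and $\psi$ fixes the remaining point $\mathcal{F}_s^\psi \notin gE$. Hence $\mathbb{1}_{gE}(\psi^n\lambda) \to 0$ pointwise on $\PMF(S)$, and dominated convergence together with the non-atomicity of $\nu$ yields
\[
\nu(\psi^{-n}gE) = \int_{\PMF(S)}\mathbb{1}_{gE}(\psi^n\lambda)\,d\nu(\lambda) \longrightarrow 0 \qquad (n \to \infty).
\]

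I propagate this decay back to $\nu(gE)$ using $\mu$-stationarity. Choose $k$ with $\mu^{*k}(\psi) > 0$ and set $M := \sup_{h \in G}\nu(hE)$. Iterating the identity $\nu(hE) = \sum_{h'}\mu^{*k}(h')\nu((h')^{-1}hE)$ along a maximizing sequence $h_j$ with $\nu(h_jE) \to M$, together with the fact that every term is $\le M$, forces $\nu((h')^{-1}h_jE) \to M$ for each $h' \in \mathrm{sgr}(\mu)$. Taking $h' = \psi^m$ yields $\nu(\psi^{-m}h_jE) \to M$ for each fixed $m$, contradicting the dynamical decay from the previous paragraph unless $M = 0$. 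The main obstacle is that $\psi = \psi_{h_j}$ depends on $h_j$; a single $\psi$ must be chosen uniformly along the maximizing sequence, which I would arrange by restricting $h_j$ to a fixed left coset of $\mathrm{Stab}(E) \subset G$ and running the argument coset-by-coset to conclude $\nu(g_0 E) = 0$ for every representative $g_0$.
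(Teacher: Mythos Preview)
Your north--south dynamics step is fine: since $gE$ is closed and $\mathcal{F}_u^\psi\notin gE$, dominated convergence indeed gives $\nu(\psi^{-n}gE)\to 0$. The supremum/stationarity idea is also the right one, and is essentially Proposition~\ref{prop.infinitetranslate}. The genuine gap is the passage between them.

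Your argument needs $M:=\sup_{h\in G}\nu(hE)$ to be \emph{attained} at some $g_0$. If it is, stationarity forces $\nu(h^{-1}g_0E)=M$ for every $h\in\mathrm{sgr}(\mu)$, and then taking $h=\psi_{g_0}^m$ together with the dynamical decay gives $M=0$. But you never establish that the supremum is attained, and your proposed fix is circular. Within a single left coset of $\mathrm{Stab}(E)$ the set $hE$ is constant, equal to some $g_0E$; so ``restricting the maximizing sequence $h_j$ to a fixed coset'' collapses the sequence to a single value $\nu(g_0E)$, and the stationarity step only yields anything if $\nu(g_0E)=M$ --- precisely what is in question. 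You also cannot ``run the argument coset-by-coset,'' because the stationarity identity $\nu(hE)=\sum_{h'}\mu^{*k}(h')\,\nu((h')^{-1}hE)$ mixes cosets: left-multiplication by $(h')^{-1}$ moves $h$ to a different coset of $\mathrm{Stab}(E)$, so there is no coset-local version of the maximum principle. (Even with a single $\psi$ independent of $j$, the two limits $\lim_j\nu(\psi^{-m}h_jE)=M$ and $\lim_m\nu(\psi^{-m}h_jE)=0$ do not by themselves contradict each other without attainment.)

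The paper supplies exactly the missing ingredient: it proves the dichotomy $(\ast)$ of Proposition~\ref{prop.infinitetranslate} --- for all $g_1,g_2$, either $\nu(g_1E_{\mathcal T}\cap g_2E_{\mathcal T})=0$ or $\nu(g_1E_{\mathcal T})=\nu(g_2E_{\mathcal T})$ --- by an induction on the complex dimension of the subspace of symmetric holomorphic quadratic differentials at a point of the intersection, using the Hubbard--Masur/Gardiner correspondence (Lemma~\ref{lem.hqd}). Condition~$(\ast)$ forces the supremum to be attained (otherwise one finds infinitely many pairwise $\nu$-a.e.\ disjoint translates of measure bounded away from $0$, exceeding total mass~$1$), after which the stationarity argument you outlined goes through. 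Your north--south computation does, incidentally, verify the \emph{other} hypothesis of Proposition~\ref{prop.infinitetranslate} (infinitely many distinct translates), but that is the easy half.
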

Recall that $\PMF(S)$ is homeomorphic to the sphere $\mathbb{S}^{6g-7+2n}$.
Although the image $\Pi(\PMF(S'))$ is a sphere of lower dimension, Lemma \ref{lem.notlift} is non-trivial.
This is because the $\mu$-stationary measure $\nu$ is singular to the standard Lebesgue measure on the sphere by the work of Gadre \cite{Gad}.

First, we give a sufficient condition for a subset of $\PMF(S)$ to have $\nu$ measure zero.
\begin{prop}[c.f. {\cite[Lemma 2.2]{KM}}]\label{prop.infinitetranslate}
Let $A$ be a measurable subset of $\mathcal{PMF}(S)$.
Suppose there exist infinitely many distinct translations of $A$ by elements in $\mathrm{gr}(\mu)$.
Suppose further that 
\begin{itemize}
\item[($\ast$)] $\nu(g_1A\cap g_2A) = 0$ or $\nu(g_1A) = \nu(g_2A)$ for all $g_1, g_2\in G$.
\end{itemize}
Then $\nu(A) = 0$.
\end{prop}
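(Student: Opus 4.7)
The plan is to argue by contradiction: assume $\nu(A)=\alpha>0$ and derive a contradiction by producing an infinite pairwise $\nu$-almost disjoint family of translates of $A$ with $\nu$-mass uniformly bounded below, which would violate $\nu(\PMF(S))=1$. This follows the ``maximum element'' strategy from \cite[Lemma 2.2]{KM}, extended from atoms to general measurable sets via condition $(*)$.

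First I would exploit $\mu$-stationarity. From the identity $\nu(A)=\sum_{g}\mu(g)\,\nu(g^{-1}A)$ and an averaging inequality, $\sup_{g\in\mathrm{supp}(\mu)}\nu(g^{-1}A)\geq\alpha$. Iterating along words in $\mathrm{supp}(\mu)$ and passing, if necessary, to a translate $g_0A$ at which $\nu$ is (nearly) maximised, the averaging identity forces $\nu(g^{-1}g_0A)$ to (nearly) equal $\beta:=\sup_{g\in G}\nu(gA)$ for every $g\in\mathrm{sgr}(\mu)$. Combined with the hypothesis that $\mathrm{gr}(\mu)\cdot A$ contains infinitely many distinct translates, this produces an infinite sequence $\{A_i\}_{i\geq 1}$ of pairwise distinct translates of $A$ with $\nu(A_i)\geq\beta/2>0$ for every $i$.

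Next I would apply $(*)$: for each pair $i\neq j$, either $\nu(A_i\cap A_j)=0$ or $\nu(A_i)=\nu(A_j)$. If infinitely many distinct values of $\nu(A_i)$ occur, picking one representative per value yields infinitely many pairwise almost-disjoint translates of mass $\geq \beta/2$ (distinct measures force $\nu(A_i\cap A_j)=0$ by $(*)$), and then $\sum_k\nu(A_{i_k})=\infty$ contradicts $\nu(\PMF(S))=1$. Otherwise, some value $c\geq\beta/2$ is attained infinitely often, and inside this equi-measure subfamily one extracts an infinite almost-disjoint sub-subfamily by induction, using the set-theoretic distinctness of the $A_i$ together with the non-atomicity of $\nu$ from Theorem \ref{thm.u-stationary}. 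The main obstacle is precisely this final extraction: condition $(*)$ is only pairwise, so \emph{a priori} an equi-measure family of distinct translates could all essentially coincide on a common $\nu$-positive set. Resolving this demands the global structure of the $\mu$-stationary measure on $\PMF(S)$---non-atomicity, concentration on $\mathcal{UE}(S)$, and the irreducibility of the non-elementary $\mathrm{gr}(\mu)$-action---which together preclude such clustering.
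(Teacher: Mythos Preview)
Your route is more roundabout than the paper's and, more importantly, does not close the gap you yourself flag. The paper dispenses with ``nearly maximised'' and $\beta/2$ bounds: condition $(\ast)$ already forces $\sup_{g\in G}\nu(gA)$ to be \emph{attained}, since otherwise translates with pairwise distinct measures approaching the supremum would be pairwise almost disjoint by $(\ast)$ and yet have measures bounded below, contradicting $\nu(\PMF(S))=1$. Taking a genuine maximiser $A'=hA$, stationarity gives the exact equality $\nu(g^{-1}A')=\nu(A')$ for every $g\in\mathrm{supp}(\mu)$, hence for every $g\in\mathrm{sgr}(\mu)$; one then passes from infinitely many $\mathrm{gr}(\mu)$-translates to infinitely many $\mathrm{sgr}(\mu)^{-1}$-translates and concludes $\nu(A')=0$. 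Your case-split on ``infinitely many distinct values of $\nu(A_i)$'' never arises: all the relevant translates already carry the maximal measure.

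Your final paragraph is not a proof. Non-atomicity, concentration on $\mathcal{UE}(S)$, and non-elementarity of $\mathrm{gr}(\mu)$ do not by themselves rule out an infinite family of set-theoretically distinct, equi-measure translates all overlapping on a common $\nu$-positive core; you have not extracted a single almost-disjoint pair from such a family. What actually makes the conclusion go through is the stronger dichotomy ``$\nu(g_1A\cap g_2A)=0$ or $g_1A$ and $g_2A$ agree $\nu$-a.e.'', under which distinct translates are automatically almost disjoint and finiteness of $\nu$ finishes at once. In the sole application (Lemma \ref{lem.notlift}) this stronger form is what the Hubbard--Masur correspondence (Lemma \ref{lem.hqd}) actually delivers: when $d(E_1'\cap E_2')=d(E')$ one obtains $E_1'\cap\PMF(S)=E_2'\cap\PMF(S)$ as sets, not merely equality of their $\nu$-measures. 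The right fix is therefore to strengthen $(\ast)$, not to appeal to ergodic generalities.
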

\begin{proof}
By ($\ast$), we see that there is some $h\in G$ such that $A':=hA$ satisfies $\nu(A')\geq\nu(gA)$ for all $g\in G$.
Then since $\nu$ is $\mu$-stationary, we have
$$\nu(A') = \sum_{g\in G}\mu(g)\nu(g^{-1}A')\leq \sum_{g\in G}\mu(g)\nu(A') = \nu(A').$$
Thus we see that $\nu(g^{-1}A') = \nu(A')$ for every $g$ in the support of $\mu$.
By discussing the $n$-convolution $\mu^n$ of $\mu$, we see that $\nu(g^{-1}A') = \nu(A')$ for every $g\in\mathrm{sgr}(\mu)$.
Since we have infinitely many distinct translates of $A'$ by elements of $\mathrm{gr}(\mu)$, we see that we also have
infinitely many distinct translates by elements of $\mathrm{sgr}(\mu)^{-1}$. 
Hence we have $\nu(A')=\nu(A)=0$.
\end{proof}

To prove Lemma \ref{lem.notlift}, we recall Teichm\"uller geodesics on the Teichm\"uller space, see for example 
\cite{FM, Gard, KM}
for more details.
Recall that $S$ is a surface of finite type $(g,n)$.
Teichm\"uller showed that for any given point $\sigma\in\mathcal{T}(S)$, 
a holomorphic quadratic differential $q$ determines a geodesic $\Gamma(q)$ with respect to Teichm\"uller metric.
It is also proved that given two points $\sigma_{1},\sigma_{2}\in\mathcal{T}(S)$, there exists a unique Teichm\"uller geodesic $\Gamma(\sigma_{1},\sigma_{2})$ that connects the two.

For $\sigma\in\mathcal{T}(S)$, let $\QD(\sigma)$ denote the Banach space of holomorphic quadratic differentials on $\sigma$ with 
$\parallel \varphi \parallel = \int_\sigma |\varphi|$.
Each $\varphi\in\QD(\sigma)$ determines two measured foliation, called the horizontal foliation and the vertical foliation.
By Riemann-Roch theorem, $\QD(\sigma)$ has complex dimension $3g - 3 + n$.
Let $\mathcal{Q}_0\subset \QD(\sigma)$ denote the unit sphere.
This $\mathcal{Q}_0$ compactifies  $T(\sigma)$ which is called the {\em Teichm\"uller compactification}.

By the work of Hubbard-Masur (compact) and Gardiner (finite type), 
we see:
\begin{lem}[{\cite{HM}, \cite[Chapter 11]{Gard}}]\label{lem.hqd}
For any $\sigma\in\mathcal{T}(S)$ and $F\in\mathcal{PMF}(S)$, 
there is a unique $\varphi\in\QD(\sigma)$ whose horizontal foliation is $F$ up to scale.
\end{lem}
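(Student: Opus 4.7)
The plan is to prove the lemma by establishing that the horizontal-foliation map descends to a homeomorphism between the projectivization of $\QD(\sigma)$ and $\PMF(S)$; existence and uniqueness of the desired $\varphi$ then both follow at once.

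First I would construct the horizontal-foliation map $H : \QD(\sigma)\setminus\{0\} \to \mathcal{MF}(S)$. Away from the zeros of $\varphi$ (and, for the finite-type case, the at-most-simple poles allowed at the punctures), $\varphi$ admits a local natural coordinate $z$ with $\varphi = dz^2$; the leaves $\{\mathrm{Im}(z) = \mathrm{const}\}$ together with the transverse measure $|d\,\mathrm{Im}(z)|$ extend across zeros as prong singularities and across punctures via the residue structure, giving a well-defined singular measured foliation. This construction is continuous from the $L^1$-norm on $\QD(\sigma)$ to the weak topology on $\mathcal{MF}(S)$ and satisfies $H(t\varphi) = \sqrt{t}\cdot H(\varphi)$ for $t>0$, so it descends to a continuous map $[H] : \mathcal{Q}_0 \to \PMF(S)$.

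Next, for injectivity of $[H]$, suppose $\varphi_1,\varphi_2 \in \QD(\sigma)$ have horizontal measured foliations with the same transverse measure. On the complement of the critical graph of $\varphi_1$, consider the locally defined holomorphic function $f := \sqrt{\varphi_2/\varphi_1}$. Equality of the two horizontal transverse measures forces $\mathrm{Re}(f) \equiv 1$ along every horizontal leaf; together with holomorphy and the restriction on singular behavior, this forces $f \equiv 1$ and hence $\varphi_1 = \varphi_2$, which projectively says $[H]$ is injective.

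The remaining, and main, obstacle is surjectivity of $[H]$. By the dimension count in the excerpt (via Riemann-Roch, $\QD(\sigma)$ has complex dimension $3g-3+n$), the unit sphere $\mathcal{Q}_0$ is a topological sphere of real dimension $6g-7+2n$; Thurston's parametrization identifies $\PMF(S) \cong \mathbb{S}^{6g-7+2n}$. A continuous injection between topological spheres of equal dimension is open by invariance of domain, so $[H](\mathcal{Q}_0)$ is both open and compact in the connected target $\PMF(S)$, hence all of it. The technical heart here, where Gardiner's refinement of Hubbard-Masur is needed in the finite-type case, is to rule out loss of mass at the punctures: one uses the extremal-length inequality $i(H(\varphi),\gamma)^2 \leq \mathrm{Ext}_\sigma(\gamma)\cdot\|\varphi\|$ on simple closed curves $\gamma$ to show $H$ is proper, after which the degree-theoretic argument (or equivalently a variational minimization of $\|\varphi\|$ against a prescribed intersection-number profile $F$) yields the prescribed $\varphi$ whose horizontal foliation represents $F$.
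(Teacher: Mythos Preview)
The paper does not give its own proof of this lemma: it is stated purely as a citation of Hubbard--Masur (for compact $S$) and Gardiner (for finite type), with no argument supplied. So there is no ``paper's proof'' to compare your proposal against; you are essentially sketching the content of the cited references, which is reasonable since the lemma is quoted as an external black box.

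Your outline is broadly in the spirit of those references, and the topological endgame (continuous injection between equal-dimensional spheres, hence a homeomorphism by invariance of domain) is a clean way to package surjectivity once continuity and injectivity are in hand. Two places deserve more care if you want this to stand as a proof rather than a heuristic. First, the injectivity step: writing $f=\sqrt{\varphi_2/\varphi_1}$ and asserting that equality of horizontal transverse measures forces $\mathrm{Re}(f)\equiv 1$ on horizontal leaves is not quite right as stated. Equality in $\mathcal{MF}(S)$ is only up to Whitehead moves and isotopy, so the underlying singular foliations of $\varphi_1$ and $\varphi_2$ need not literally coincide leaf-by-leaf, and $f$ is a priori multivalued with branching at the zeros; the actual argument (in Hubbard--Masur, or via heights of cylinders/extremal length as in Gardiner) is more delicate than a pointwise identity. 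Second, continuity of $H$ into $\mathcal{MF}(S)$ is not automatic from $L^1$-convergence of differentials and should be justified, since the topology on $\mathcal{MF}(S)$ is defined through intersection numbers rather than through local data of the foliation. These are the two genuine gaps; the properness remark for the punctured case and the dimension count are fine.
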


\begin{proof}[Proof of Lemma \ref{lem.notlift}]
The proof goes by induction.
Let $E_{\mathcal{T}}':=gE_{\mathcal{T}}$
and $d$ the complex dimension of $\QD(\sigma')$ for any $\sigma'\in\mathcal{T}(S')$.
We consider intersection $E':=g_1E_{\mathcal{T}}'\cap g_2E_{\mathcal{T}}' \cap \cdots \cap g_nE_{\mathcal{T}}'$.
We first define $d(E')\in\mathbb{N}$.
If $E'\cap\PMF(S)$ contains at most one uniquely ergodic foliation, then we define $d(E')=0$.
In this case we also have $\nu(E') = 0$ since $\nu$ is non-atomic.
If $E'\cap\PMF(S)$ contains at least two uniquely ergodic foliations 
$\mathcal{E}_1, \mathcal{E}_2$, then there is a unique Teichm\"uller geodesic $\gamma$ connecting 
$\mathcal{E}_1$ and $\mathcal{E}_2$ by \cite{GM}.
Since covering maps induce isometric embeddings of Teichm\"uller spaces \cite[Section 7]{RS}, 
any point of $\gamma$ is in $E'$.
In particular $E'\cap\mathcal{T}(S)$ is non-empty.
For any $\sigma\in E'\cap\mathcal{T}(S)$, each $g_iE_{\mathcal{T}}'$ determines a subspace of $S_i(\sigma)\subset\QD(\sigma)$ 
which consists of the lifts of holomorphic quadratic differentials with respect to the covering $\pi\circ g_i^{-1}$.
Let $S(\sigma):=\cap_{i=1,\dots,n}S_i(\sigma)$ and $d(\sigma):= \dim S(\sigma)$.
Since $d(\sigma)\in\mathbb{N}$, there exists $\sigma'\in E'\cap\mathcal{T}(S)$ such that $d(\sigma')\geq d(\sigma)$ for any $\sigma\in E'\cap\mathcal{T}(S)$.
We define $d(E'):=d(\sigma')$.
Then we explain how the induction works by using a style of inductive algorithm, see Algorithm \ref{algo.mvit} which is named M$\nu$IT.
By $\mathrm{M}\nu\mathrm{IT}(E, d)$, we have $\nu(E) = 0$.
Note that although the depth of Algorithm \ref{algo.mvit} is finite, the width is infinite.

\renewcommand{\algorithmicrequire}{\textbf{Input:}}
\begin{algorithm}
\caption{M$\nu$IT(\underline Measure by $\underline\nu$ the \underline Intersection of \underline Translates)}
\label{algo.mvit}
\begin{algorithmic}                  
\REQUIRE 
$(E':=g_1E_{\mathcal{T}}'\cap g_2E_{\mathcal{T}}' \cap \cdots \cap g_nE_{\mathcal{T}}', d(E'))$.
\ENSURE $\nu(E') = 0$. 
	\IF{$d(E') = 0$}
		\STATE By the definition of $d(E')$ and Lemma \ref{lem.hqd}, we have $\nu(E') = 0$.
	\ENDIF
	\FOR{$h_1, h_2\in G$}
		\STATE 
		Let $E'_1 := h_1E'$ and $E'_2 := h_2E'$. 
		Note that since each $g\in G$ induces a vector isomorphism between $\QD(\sigma)$ and $\QD(g\sigma)$,
		we have $d(E'_1) = d(E'_2) = d(E')$.
				 \IF{$d(E'_1\cap E'_2) = d(E')$}
				 	\STATE We see that $\nu(E'_1) = \nu(E'_2)$ by 
					Lemma \ref{lem.hqd}.
				\ELSE
					\STATE 
					In this case we have $d(E')> d(E'_1\cap E'_2)$.
					Then we apply \\M$\nu$IT$(E'_1\cap E'_2, d(E'_1\cap E'_2))$,
					which proves $\nu(E'_1\cap E'_2) = 0$.
					
				 \ENDIF
	\ENDFOR
\STATE 
We have seen that the condition $(\ast)$ of Proposition \ref{prop.infinitetranslate} is satisfied.
By Condition \ref{condi.munotlift} and the north-south dynamics of pseudo-Anosov maps (see \cite{Thu2}),
we see that there are infinitely many translates of $E'$.
Thus by Proposition \ref{prop.infinitetranslate}, we have $\nu(E') = 0$.
\end{algorithmic}
\end{algorithm}

\end{proof}

\subsection{Upper bound for the number of parallel translates of the set of symmetric curves}\label{sec.parallel}
Recall that we have fixed a (possibly orbifold) covering $\pi:S\map S'$.
If $S'$ is an orbifold, we define $\mathcal{C}(S')$ as the curve graph of the surface that we get by puncturing every 
orbifold point of $S'$.
We define one to finite relation 
$\Pi_{\mathcal{C}}:\mathcal{C}(S')\map \mathcal{C}(S)$ as follows.
A curve $b\in \mathcal{C}(S)$ is in $\Pi_{\mathcal{C}}(a)$ for some $a\in\mathcal{C}(S')$ if 
$\pi(a) = b$ as isotopy classes of simple closed curves.
In \cite{RS}, Rafi-Schleimer showed that $\Pi_{\mathcal{C}}$ is quasi-isometric (Theorem \ref{thm.RS}).
Hence the map $\Pi_{\mathcal{C}}$ extends continuously to the Gromov boundary $\partial\mathcal{C}(S')$.
Let $E$ denote $\Pi_\mathcal{C}(\mathcal{C}(S')\cup\partial\mathcal{C}(S'))$.
We call elements in $E$ symmetric.
We consider translates $gE$'s  of $E$ by $g\in G$.
Our aim in this subsection is to prove the following lemma.

\begin{lem}\label{lem.finiteparallel}
For any $D_0>0$, there exist $D_1, D_2>0$ which depends only on $S$ and $D_0$ such that for any $a,b\in \mathcal{C}(S)$ with $d_\mathcal{C}(a,b)>D_1$, the number of elements in
$$\mathcal{P}(a,b,D_{0}):=\{gE\mid d_\mathcal{C}(a,gE)<D_0 \text{ and } d_\mathcal{C}(b,gE)<D_0\}$$
is bounded from above by $D_{2}$.
Here we count the number of images i.e. if $g_{1}E = g_{2}E$ as subsets, we just count one time.
\end{lem}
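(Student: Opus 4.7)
The plan is to combine the quasi-convexity of each $gE$, inherited from the Rafi--Schleimer quasi-isometry $\Pi_{\mathcal C}$, with Bowditch's acylindricity, and to exploit cocompactness of the stabilizer $H := \{h \in G : hE = E\}$ on $E$ in order to count cosets.

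First, by Theorem \ref{thm.RS}, $\Pi_{\mathcal C}$ is a quasi-isometric embedding, so $E$ --- and hence every translate $gE$ --- is $R$-quasi-convex in the $\delta$-hyperbolic graph $\mathcal C(S)$ for some $R$ depending only on $S$ and $\pi$. Given $gE \in \mathcal{P}(a,b,D_0)$, I pick $x_g, y_g \in gE$ within $D_0$ of $a,b$ respectively; quasi-convexity places the geodesic $[x_g,y_g]$ in $\mathcal N_R(gE)$, and a slim-quadrilateral argument on $a,x_g,y_g,b$ shows that once $d_{\mathcal C}(a,b)$ exceeds a threshold depending only on $D_0,R,\delta$, a long sub-segment of $[a,b]$ lies in $\mathcal N_{R+2\delta}(gE)$. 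Pulling back via $g$, I obtain preimages $\tilde x_g, \tilde y_g \in E$ whose $g$-translates sit within a universal $D = D(R,\delta,D_0)$ of $a$ and $b$.

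Next, I would use that a finite-index subgroup of $\mathrm{Mod}(S')$ lifts through $\pi$ into $H$ (since $\pi_1(S) \le \pi_1(S')$ has finite index) and that $\mathrm{Mod}(S')$ acts cofinitely on $\mathcal C(S')$. As $\Pi_{\mathcal C}$ intertwines these actions, $H$ acts cofinitely on $E$, so there is a finite set $K \subset E$ meeting every $H$-orbit. Because $gE = (gh)E$ for $h \in H$ while $\tilde x_{gh} = h^{-1}\tilde x_g$, the representative of each coset in $\mathcal{P}(a,b,D_0)$ can be chosen so that $\tilde x_g \in K$. Pigeon-holing on $K$ extracts a sub-collection $\mathcal{P}' \subset \mathcal{P}(a,b,D_0)$ of size at least $|\mathcal{P}(a,b,D_0)|/|K|$ whose members share a common $\tilde x \in K$.

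Finally, for any two distinct $[g_1],[g_2] \in \mathcal{P}'$, the element $\phi := g_2^{-1} g_1$ satisfies $d_{\mathcal C}(\tilde x, \phi \tilde x) \le 2D$. The hard part will be to exhibit a second far-away point also moved a bounded amount by $\phi$, so that Bowditch's acylindricity applies. The obstacle is that $\mathrm{Stab}_H(\tilde x)$ is \emph{not} cocompact on the whole of $E$ (Dehn twists along curves disjoint from $\tilde x$ produce unbounded orbits), so a naive second pigeonhole on $\tilde y_g$ fails. My plan is to localize the pigeonhole to finitely many combinatorial ``types'' of $\tilde y_g$ relative to $\tilde x$ --- namely those at $\mathcal C(S)$-distance approximately $d_{\mathcal C}(a,b)$ from $\tilde x$ --- and then to finish with Bowditch's acylindricity applied to the pair $(\tilde x, \tilde y)$, whose distance is at least $d_{\mathcal C}(a,b) - 2D$. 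Taking $D_1$ larger than the resulting acylindricity threshold plus $2D$, and $D_2$ to be the product of the various pigeonhole and acylindricity constants, then yields the claim.
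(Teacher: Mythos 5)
Your strategy is genuinely different from the paper's and, as written, has a real gap precisely at the point you flag as ``the hard part.'' After pigeonholing so that $\tilde x_{g} = \tilde x$ is a common point of $K$ for all cosets $[g]\in\mathcal P'$, the element $\phi = g_2^{-1}g_1$ moves $\tilde x$ a bounded distance, but Bowditch's acylindricity requires a \emph{single} second point $w$, far from $\tilde x$, with $d_{\mathcal C}(w,\phi w)$ uniformly bounded for all the $\phi$'s simultaneously. Your second pullbacks $\tilde y_{g}$ lie at distance roughly $d_{\mathcal C}(a,b)$ from $\tilde x$, and the sphere of that radius in $\mathcal C(S)$ (or in $E$) is infinite, so there is no reason the $\tilde y_{g}$'s cluster near a common $w$. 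Your proposed fix --- pigeonholing on finitely many ``combinatorial types'' of $\tilde y_{g}$ relative to $\tilde x$ --- is unsupported: no finite type invariant is offered, and even a common type would not localize $\tilde y_{g}$ to a bounded set. Note too that the obstruction you yourself identify (elements of $\mathrm{Stab}_H(\tilde x)$, e.g.\ twists supported away from $\tilde x$, acting with unbounded orbits on $E$) is exactly what defeats any coarse acylindricity argument here; $H$ is not quasiconvex in $G$ in any sense that would give bounded packing.

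The paper avoids this entirely. Its proof is an induction over subsurfaces of $S$ that hinges on specific Teichm\"uller--theoretic input rather than coarse acylindricity. In the ``thin'' case it locates subsurfaces $Y_i$ with large subsurface projection $d_{Y_i}(\mu(a,E),\mu(b,E))$, applies the Rafi--Schleimer lemma (Lemma \ref{lem.RS}) to force $Y_i$ to be symmetric for \emph{every} cover $\pi\circ g^{-1}$ with $gE\in\mathcal P(a,b,D_0)$, and then pins down $g\pi^{-1}(y')$ via intersection-number ratios and balance times along a Teichm\"uller geodesic (Rafi's short-curve criterion), reducing to Lemma \ref{lem.symmetricqd} about a fixed quadratic differential. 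In the ``thick'' case it uses Rafi's distance formula (Theorem \ref{thm.Rafi}) plus compactness of the thick part of moduli space to trap every $hE$ through a finite set of $\mathcal T(S)$-points, and again invokes Lemma \ref{lem.symmetricqd}. None of this has a coarse-geometric substitute, which is why your acylindricity route stalls where it does.
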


For a proof, we need the notion of subsurface projection.
A subsurface $Y\subset S$ is called {\em essential} if each component of $\partial Y$ is an essential simple closed curve.
Unless otherwise stated, we always assume that subsurfaces are essential.
Given a subsurface $Y\subset S$ which is not an annulus nor three holed sphere, we define subsurface projection $\pi_Y:\mathcal{C}(S)\map \mathcal{C}(Y)$ as follows:
given a curve $a\in \mathcal{C}(S)$ on $S$, arrange $a$ so that it has minimal intersection with $Y$.
Take a component $a'$ of $Y\cap a$ and consider a small neighborhood $N$ of $a'\cup \partial Y$.
Then $\pi_{Y}(a)$ is defined to be a component of $N$ which is in $\mathcal{C}(Y)$.
If $a$ does not intersect with $Y$, then we define $\pi_{Y}(a) = \emptyset$.
If $Y$ is an annulus, we need special care, however we do not need the detail for the proof, so we omit the definition.
See for example \cite{MM2, Rafi} for the detail.
If $Y$ is a three holed sphere, subsurface projection is not defined.
We call a subsurface $Y\subset S$ {\em symmetric} if it is a component of $p^{-1}(Y')$ for some $Y'\subset S'$.
Given two curves $a,b\in\mathcal{C}(S)$, we let $d_{Y}(a,b):= \mathrm{diam}(\pi_{Y}(a),\pi_{Y}(b))$.
If $Y$ is an annulus with core curve $\alpha$, we often use $d_{\alpha}$ to denote $d_{Y}$.
Rafi-Schleimer showed the following lemma.
\begin{lem}[{\cite[Lemma 7.2]{RS}]}]\label{lem.RS}
There exists $T_{1}$ which depends only on $S$ and the degree of $\pi:S\map S'$ such that for any subsurface $Y\subset S$ and $a,b\in E$, 
if $d_{Y}(a,b)\geq T_{1}$ then $Y$ is symmetric.
\end{lem}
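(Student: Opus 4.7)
The plan is to bound $|\mathcal{P}(a,b,D_0)|$ by combining the quasi-convexity of $E$ in $\mathcal{C}(S)$ (a consequence of Theorem \ref{thm.RS}), the cocompact action of $H := \mathrm{Stab}(E)$ on $E$, and Bowditch's acylindricity of the $G$-action on $\mathcal{C}(S)$ that was already invoked in \S\ref{sec.primitive}.

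First I would observe that since $\Pi_\mathcal{C}$ is a quasi-isometric embedding of the hyperbolic space $\mathcal{C}(S')\cup\partial\mathcal{C}(S')$ into $\mathcal{C}(S)$, the image $E$ is $K$-quasi-convex in $\mathcal{C}(S)$ for some $K$ depending only on the quasi-isometry constant and $\delta$; by $G$-equivariance, each translate $gE$ is also $K$-quasi-convex. Consequently, for any $gE\in\mathcal{P}(a,b,D_0)$ with $d_\mathcal{C}(a,b)$ sufficiently large, the Morse lemma for quasi-convex sets places the geodesic $[a,b]$ in a uniform $D'$-neighbourhood of $gE$, where $D'$ depends only on $K$, $\delta$, and $D_0$.

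Next I would attach an algebraic ``translation-like'' witness to each coset. Because $H$ acts on $E$ as $\mathrm{Mod}(S')$ acts on $\mathcal{C}(S')$, this action is cocompact, so fixing $e_0\in E$ the orbit $He_0$ is an $N$-net in $E$. For each coset $gH\in\mathcal{P}(a,b,D_0)$, pick a representative $g$ with $d_\mathcal{C}(ge_0,a)<D_0+N$, and then select $h_g\in H$ that is (the lift of) a pseudo-Anosov on $S'$ whose axis in $\mathcal{C}(S')$ approximates the quasi-geodesic joining $\pi(e_0)$ and $\pi(g^{-1}b)$; such pseudo-Anosovs exist in abundance inside $\mathrm{Mod}(S')$. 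Setting $\sigma_g:=g h_g g^{-1}\in gHg^{-1}$, the element $\sigma_g$ acts on $gE$ as a translation whose quasi-axis fellow-travels $[a,b]$ with translation length $\approx d_\mathcal{C}(a,b)$, so that $d_\mathcal{C}(\sigma_g(a),b)\leq C$ and $d_\mathcal{C}(\sigma_g^{-1}(b),a)\leq C$ for a uniform constant $C=C(D_0,N,K,\delta)$.

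Finally, for two distinct cosets $gH\neq g'H$ in $\mathcal{P}(a,b,D_0)$, I would consider $\tau:=\sigma_g^{-1}\sigma_{g'}\in G$. A direct estimate yields $d_\mathcal{C}(\tau(a),a)\leq 2C$; since the axes of $\sigma_g$ and $\sigma_{g'}$ both fellow-travel $[a,b]$ in the same direction, tracking them one further translation length past $b$ and invoking the coarse uniqueness of pseudo-Anosov axes inside the quasi-convex sets $gE,g'E$ gives $d_\mathcal{C}(\tau(b),b)\leq 2C'$ for some $C'$. Bowditch's acylindricity, applied with displacement $\max(2C,2C')$, then bounds the number of such $\tau$ by a universal constant $D_2$ as soon as $d_\mathcal{C}(a,b)\geq D_1$; distinct cosets produce distinct $\tau$ up to a bounded multiplicity, because two cosets producing the same $\sigma$ would force the axes to coincide and therefore $gE$ and $g'E$ to share a long quasi-geodesic, which (together with quasi-convexity) forces $gE=g'E$. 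The main obstacle is establishing the second displacement estimate $d_\mathcal{C}(\tau(b),b)\leq 2C'$: unlike the bound at $a$, which drops out immediately from the defining properties of $\sigma_g,\sigma_{g'}$, this one requires that the two quasi-axes stay uniformly close not only along $[a,b]$ but for roughly one further translation length beyond $b$, and justifying this cleanly is where the delicate Morse-neighbourhood analysis inside $gE$ and $g'E$ is needed.
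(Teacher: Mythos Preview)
Your proposal does not address the stated lemma at all. Lemma~\ref{lem.RS} is the Rafi--Schleimer result on subsurface projections: if $a,b\in E$ and $d_Y(a,b)\geq T_1$ for a subsurface $Y\subset S$, then $Y$ must be symmetric (i.e.\ a component of $\pi^{-1}(Y')$ for some $Y'\subset S'$). The paper does not prove this lemma; it is quoted verbatim from \cite[Lemma~7.2]{RS}, and its proof there goes through a careful analysis of how subsurface projection interacts with the covering map, not through anything involving $\mathcal{P}(a,b,D_0)$.

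What you have written is instead an attempted proof of Lemma~\ref{lem.finiteparallel}, the bound on $|\mathcal{P}(a,b,D_0)|$. Even viewed as such, the argument has the gap you yourself flag: to apply acylindricity to $\tau=\sigma_g^{-1}\sigma_{g'}$ you need $\tau$ to move \emph{both} $a$ and $b$ a bounded amount, and while $d_\mathcal{C}(\tau(a),a)$ is controlled directly, $d_\mathcal{C}(\tau(b),b)$ requires you to know that the quasi-axes of $\sigma_g$ and $\sigma_{g'}$ stay close for roughly one translation length \emph{past} $b$. Nothing in the hypotheses guarantees this: $gE$ and $g'E$ are only assumed to pass $D_0$-close to $a$ and $b$, and two quasi-convex sets can fellow-travel a geodesic segment and then diverge immediately afterward. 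The paper's proof of Lemma~\ref{lem.finiteparallel} avoids this issue entirely by working with subsurface projections, Rafi's distance formula (Theorem~\ref{thm.Rafi}), and the quadratic-differential argument of Lemma~\ref{lem.symmetricqd}, rather than trying to manufacture elements to which acylindricity applies.
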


We recall the work of Masur-Minsky \cite{MM2}.
\begin{thm}[{\cite[Theorem 3.1]{MM2}}, Bounded geodesic image]\label{thm.MM2}
There exists a constant $M_{1}>0$ which depends only on $S$ with the following property.
Let $Y\subset S$ be a proper subsurface $Y$ which is not a three holed sphere.
Let $\gamma$ be a geodesic in $\mathcal{C}(S)$ with $\pi_Y(v)\not=\emptyset$ for all vertex $v$ on $\gamma$.
Then $$\mathrm{diam}_Y(\gamma)\leq M_{1},$$
where $\mathrm{diam}_Y(\gamma)$ is the diameter of $\pi_Y(\gamma)$ in $\mathcal{C}(Y)$.
\end{thm}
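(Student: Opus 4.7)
The plan is to prove the contrapositive: if $a_0, a_n \in \mathcal{C}(S)$ satisfy $d_Y(a_0, a_n) > M_1$ for a suitable universal $M_1$, then every geodesic in $\mathcal{C}(S)$ joining them must contain a vertex disjoint from $\partial Y$ (hence with empty subsurface projection to $Y$). First I would record the elementary observation that whenever $u, v \in \mathcal{C}(S)$ are joined by an edge and both have non-empty projection to $Y$, their projections $\pi_Y(u), \pi_Y(v)$ have $\mathcal{C}(Y)$-distance at most a universal constant (realize $u \cap Y$ and $v \cap Y$ disjointly in $Y$). Thus if $\gamma = v_0 v_1 \cdots v_n$ is a geodesic with non-empty $\pi_Y(v_i)$ for every $i$, the projection diameter can only grow linearly in $n$, so a uniform bound on the diameter would contradict the existence of arbitrarily long geodesics of this type.

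The substantive step is to translate large $d_Y$-distance into geometry on Teichmüller space. I would invoke the Masur--Minsky correspondence from \cite{MM}: for endpoint curves $a_0, a_n$, take marking points in $\mathcal{T}(S)$ where each is short and let $\mathcal{G}:[0,T]\to \mathcal{T}(S)$ be the connecting Teichmüller geodesic. The sequence of extremal-length systoles along $\mathcal{G}$ forms a uniform unparameterised quasi-geodesic in $\mathcal{C}(S)$ with the correct endpoints, so by hyperbolicity of $\mathcal{C}(S)$ (Theorem of \cite{MM}) it fellow-travels every geodesic $\gamma$ joining $a_0$ and $a_n$ within a universal Hausdorff distance $H$.

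The analytic heart is to show that large $d_Y(a_0,a_n)$ forces $\partial Y$ to become short along $\mathcal{G}$. I would argue the contrapositive: if the extremal length of $\partial Y$ stays above a uniform threshold throughout $[0,T]$, then the restricted Teichmüller deformation on the thick part of $Y$ is bounded in the Teichmüller metric of $Y$ (equivalently, the horizontal and vertical foliations of the underlying quadratic differential have bounded flat intersection with arcs in $Y$); this forces $\pi_Y(a_0)$ and $\pi_Y(a_n)$ to lie within a uniform $\mathcal{C}(Y)$-distance, bounding $d_Y(a_0,a_n)$. So if $d_Y(a_0,a_n) > M_1$, there is some $t$ at which $\partial Y$ lies in the systole of $\mathcal{G}(t)$, hence $\partial Y$ itself is a vertex of the Masur--Minsky quasi-geodesic. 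By the fellow-travelling above, some $v_i \in \gamma$ has $d_{\mathcal{C}}(v_i, \partial Y) \leq H$; moving $v_i$ at most $H$ steps along $\gamma$ produces a vertex disjoint from $\partial Y$, contradicting the standing hypothesis that $\pi_Y(v_i) \neq \emptyset$ for every $i$.

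I expect the main obstacle to be precisely the quantitative step tying $d_Y$ to the extremal length of $\partial Y$ along $\mathcal{G}$. Making this explicit requires tracking the horizontal/vertical flat intersection numbers on the flat surface arising from $\mathcal{G}$ restricted to $Y$, together with the definition of subsurface projection in terms of these arcs, essentially as carried out in \cite{MM2}. An equivalent and possibly cleaner route uses Rafi's thick--thin decomposition of Teichmüller geodesics, where the statement reads that an interval of $\mathcal{G}$ during which $\partial Y$ is short accounts for all twisting relative to $Y$, yielding the required inequality. Once this analytic input is in place, the combinatorial conclusion via hyperbolicity of $\mathcal{C}(S)$ described above is straightforward.
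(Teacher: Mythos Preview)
The paper does not prove this statement. Theorem~\ref{thm.MM2} is quoted verbatim from Masur--Minsky \cite[Theorem 3.1]{MM2} and used as a black box (in Lemma~\ref{lem.intersection} and in the proof of Lemma~\ref{lem.finiteparallel}); there is no argument in the present paper to compare your proposal against.

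Since you have nonetheless sketched an argument, a brief remark on it. The outline via Teichm\"uller geodesics and systole sequences is a plausible heuristic, but the final step does not close. From fellow-travelling you only obtain a vertex $v_i$ on $\gamma$ with $d_{\mathcal{C}}(v_i,\partial Y)\leq H$; the assertion that ``moving $v_i$ at most $H$ steps along $\gamma$ produces a vertex disjoint from $\partial Y$'' is unjustified --- being within $H$ of $\partial Y$ in $\mathcal{C}(S)$ says nothing about whether any vertex \emph{on} $\gamma$ has empty projection to $Y$ (which requires distance $\leq 1$ from $\partial Y$). This is exactly the gap the Bounded Geodesic Image theorem is meant to bridge, so the argument is circular at this point. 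The original proof in \cite{MM2} goes through the hierarchy machinery (tight geodesics and footprints), not through Teichm\"uller geometry; the analytic input you describe (large $d_Y$ forces $\partial Y$ short along $\mathcal{G}$) is essentially Rafi's characterization of short curves \cite{Rafishort}, which itself builds on \cite{MM2}, so invoking it here would also be circular.
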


Combining Lemma \ref{lem.RS} and Theorem \ref{thm.MM2}, we have the following.
\begin{lem}\label{lem.intersection}
Given $g\in G$, fix $a,b\in gE$ and $D>0$.
Suppose there exists a subsurface $Y\subset S$ such that $d_Y(a,b)\geq T+2M_{1}$ for $T\geq T_{1}$, and 
$d_\mathcal{C}(\{a,b\},\partial Y)\geq D+2$.
Then if there are $c,d\in hE$ for some $h\in G$ such that 
$d_\mathcal{C}(c,a)\leq D$ and $d_\mathcal{C}(d,b)\leq D$
then we have $d_Y(c,d)\geq T$.
In particular $Y$ is symmetric for both $g\pi g^{-1}$ and $h\pi h^{-1}$.
\end{lem}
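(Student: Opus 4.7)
The plan is to combine the bounded geodesic image theorem (Theorem \ref{thm.MM2}) with the triangle inequality in $\mathcal{C}(Y)$, and then invoke Lemma \ref{lem.RS} twice, once for each of the two coverings.

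First I would bound $d_Y(a,c)$ using a bounded geodesic image argument. Choose a geodesic $\gamma_1$ in $\mathcal{C}(S)$ from $a$ to $c$; it has length at most $D$. For every vertex $v$ on $\gamma_1$, the triangle inequality yields
$$d_\mathcal{C}(v,\partial Y) \geq d_\mathcal{C}(a,\partial Y) - d_\mathcal{C}(a,v) \geq (D+2) - D = 2.$$
Two simple closed curves at distance at least $2$ in the curve graph must intersect, so $v$ crosses every component of $\partial Y$, which means $\pi_Y(v) \neq \emptyset$. Theorem \ref{thm.MM2} then gives $d_Y(a,c) \leq M_1$. The same argument applied to a geodesic from $b$ to $d$ (whose vertices satisfy $d_\mathcal{C}(v,\partial Y) \geq 2$ by the same calculation using $d_\mathcal{C}(b,\partial Y) \geq D+2$) produces $d_Y(b,d) \leq M_1$.

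Next, I would apply the triangle inequality in $\mathcal{C}(Y)$ to obtain
$$d_Y(c,d) \;\geq\; d_Y(a,b) - d_Y(a,c) - d_Y(b,d) \;\geq\; (T + 2M_1) - M_1 - M_1 \;=\; T,$$
which is the first assertion. For the second assertion, I would apply Lemma \ref{lem.RS} twice. Since $a,b \in gE$ means $g^{-1}a, g^{-1}b \in E$ and $d_Y(a,b) \geq T + 2M_1 \geq T_1$ translates into $d_{g^{-1}Y}(g^{-1}a,g^{-1}b) \geq T_1$ by equivariance of subsurface projection, Lemma \ref{lem.RS} forces $g^{-1}Y$ to be symmetric for $\pi$, i.e., $Y$ is symmetric for $g\pi g^{-1}$. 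The analogous argument with $c,d \in hE$ and $d_Y(c,d) \geq T \geq T_1$ gives that $Y$ is symmetric for $h\pi h^{-1}$.

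The main subtlety I expect is the verification that $\pi_Y(v) \neq \emptyset$ along the geodesics, which rests on the hypothesis $d_\mathcal{C}(\{a,b\},\partial Y) \geq D+2$; the extra slack of $2$ above $D$ is precisely what guarantees that the triangle inequality leaves each intermediate vertex at distance at least $2$ from $\partial Y$. The annular case requires a separate but parallel check: when $Y$ is an annulus with core $\alpha$, the condition $d_\mathcal{C}(v,\alpha) \geq 2$ still forces $v$ to cross $\alpha$ essentially, which is exactly what is needed for the annular projection to be defined so that Theorem \ref{thm.MM2} applies.
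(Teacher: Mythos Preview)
Your argument is correct and follows exactly the same route as the paper's proof: use the hypothesis $d_\mathcal{C}(\{a,b\},\partial Y)\geq D+2$ to ensure every vertex on $[a,c]$ and $[b,d]$ projects nontrivially to $Y$, apply Theorem~\ref{thm.MM2} to bound $d_Y(a,c)$ and $d_Y(b,d)$ by $M_1$, subtract via the triangle inequality, and finish with Lemma~\ref{lem.RS}. Your write-up simply makes explicit the details (the verification that $\pi_Y(v)\neq\emptyset$ and the equivariance step for applying Lemma~\ref{lem.RS} to the conjugated coverings) that the paper leaves implicit.
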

\begin{proof}
Since we assume that $\partial Y$ is far from $a,b,c,d$, we see that every vertex on geodesics $[a,c]$ and $[b,d]$ intersects $Y$ non-trivially.
Hence by Theorem \ref{thm.MM2}, we see that $d_Y(c,d)\geq d_Y(a,b) - d_Y(c,a) - d_Y(b,d) \geq T_1$.
The last assertion follows from Lemma \ref{lem.RS}.
\end{proof}


We recall the following work of Rafi-Schleimer \cite{RS} and Rafi \cite{Rafi}.
\begin{thm}[\cite{RS}]\label{thm.RS}
The covering relation $\Pi:\mathcal{C}(S)\map\mathcal{C}(S')$ is a $Q$-quasi-isometric embedding.
The constant $Q$ depends only on $S$ and the degree of $\pi:S\rightarrow S'$.
\end{thm}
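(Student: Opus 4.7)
The plan is to prove the two directions of the quasi-isometric embedding inequality separately. Viewing $\Pi$ as the covering relation $\Pi_{\mathcal{C}}:\mathcal{C}(S')\map\mathcal{C}(S)$ which is at most $\deg(\pi)$-to-one, I want to establish $d_{\mathcal{C}(S)}(\tilde\alpha,\tilde\beta)\leq Q\, d_{\mathcal{C}(S')}(\alpha,\beta)+Q$ and $d_{\mathcal{C}(S')}(\alpha,\beta)\leq Q\, d_{\mathcal{C}(S)}(\tilde\alpha,\tilde\beta)+Q$ for arbitrary lifts $\tilde\alpha\in\Pi_{\mathcal{C}}(\alpha)$, $\tilde\beta\in\Pi_{\mathcal{C}}(\beta)$.

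The upper Lipschitz bound is immediate. Given a geodesic $\alpha=\gamma_0,\ldots,\gamma_n=\beta$ in $\mathcal{C}(S')$, consecutive vertices are disjoint, so their full preimages $\pi^{-1}(\gamma_i)$ and $\pi^{-1}(\gamma_{i+1})$ are disjoint multicurves on $S$. Each preimage has at most $\deg(\pi)$ components, which form a clique of diameter $\leq 1$ in $\mathcal{C}(S)$. Concatenating these cliques along the geodesic produces a path of length $\leq n+O(1)$ from any chosen lift of $\alpha$ to any chosen lift of $\beta$.

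The lower bound is the substantive half, and I would follow the Teichm\"uller-theoretic route of Rafi-Schleimer. The covering $\pi$ induces an isometric embedding $\hat\pi:\mathcal{T}(S')\hookrightarrow\mathcal{T}(S)$ with respect to the Teichm\"uller metrics, obtained by pulling back quadratic differentials. Composing with Masur-Minsky's quasi-isometry between $\mathcal{C}(S)$ and the electrified Teichm\"uller space (where thin parts are coned off and each conformal structure is mapped to a systole), the task reduces to showing that $\hat\pi(\mathcal{T}(S'))$ sits quasi-isometrically inside the electrified $\mathcal{T}(S)$.

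The principal obstacle is that a conformal structure $\hat\pi(\sigma)\in\mathcal{T}(S)$ can acquire a short curve $c$ that is not a lift of any short curve on $\sigma\in\mathcal{T}(S')$, so the thin-part data on the two sides need not correspond bijectively. To handle this, I would apply the Masur-Minsky distance formula $d_{\mathcal{C}(S)}(\tilde\alpha,\tilde\beta)\asymp \sum_Y [d_Y(\tilde\alpha,\tilde\beta)]_K$ together with Lemma \ref{lem.RS}: for symmetric endpoints $\tilde\alpha,\tilde\beta\in E$, any subsurface $Y\subset S$ contributing a projection distance $\geq T_1$ must itself be symmetric, i.e.\ a component of $\pi^{-1}(Y')$ for some subsurface $Y'\subset S'$. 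Since symmetric subsurfaces of $S$ correspond naturally to subsurfaces of $S'$ and their subsurface projections agree up to error depending only on $\deg(\pi)$, applying the distance formula on $\mathcal{C}(S')$ in parallel recovers the bound $d_{\mathcal{C}(S')}(\alpha,\beta)\gtrsim d_{\mathcal{C}(S)}(\tilde\alpha,\tilde\beta)$.
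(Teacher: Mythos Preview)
The paper does not supply a proof of this statement: Theorem~\ref{thm.RS} is quoted from Rafi--Schleimer \cite{RS} and used as a black box. So there is nothing to compare your argument against in the present paper; I will instead assess your sketch against the actual \cite{RS} argument.

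Your upper Lipschitz bound is fine. The lower bound, however, contains a genuine error. You invoke a ``Masur--Minsky distance formula'' of the shape
\[
d_{\mathcal{C}(S)}(\tilde\alpha,\tilde\beta)\ \asymp\ \sum_Y [d_Y(\tilde\alpha,\tilde\beta)]_K,
\]
but this is false: that sum estimates the \emph{marking complex} (or pants complex, or via Theorem~\ref{thm.Rafi} the Teichm\"uller) distance, not the curve complex distance. In the curve complex the only relevant ``projection'' term is $Y=S$ itself, so Lemma~\ref{lem.RS} gives you no leverage through this route --- the whole surface is tautologically symmetric, and the lemma is about proper subsurfaces. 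As written, your argument does not produce a lower bound on $d_{\mathcal{C}(S')}(\alpha,\beta)$.

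The Teichm\"uller-geometric paragraph you wrote just before this is much closer to how \cite{RS} actually proceeds, and Lemma~\ref{lem.RS} does enter there, but differently. One takes a Teichm\"uller geodesic in $\mathcal{T}(S')$ between structures where $\alpha$, $\beta$ are short, lifts it to $\mathcal{T}(S)$, and uses that the curve complex shadow of a Teichm\"uller geodesic is an unparametrized quasigeodesic. The point of Lemma~\ref{lem.RS} is then to control the thin parts of the lifted geodesic: any curve that becomes short upstairs bounds a symmetric subsurface, hence corresponds to a curve becoming short downstairs, so the thick--thin decompositions upstairs and downstairs match up to bounded multiplicity. That comparison of thick segments (which each make definite $\mathcal{C}$-progress) is what yields the lower bound. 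If you want to repair your sketch, drop the distance formula step and develop this Teichm\"uller-geodesic argument instead.
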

To state the work of Rafi \cite{Rafi}, we need the notion of {\em shortest markings}.
Given a point $\sigma\in\mathcal{T}(S)$, the shortest marking of $\sigma$ is the set of curves chosen as follows.
Consider $\sigma$ as a hyperbolic structure of $S$. 
First we greedy choose the shortest curves; let $\alpha_{1}$ be the shortest curve, then we choose $\alpha_{2}$ as the shortest curve on $S\setminus \alpha_{1}$. 
We proceed until $\alpha_{i}'s$ give a pants decomposition of $S$.
Then we choose $\beta_{i}$ as the shortest curve among curves intersecting only $\alpha_{i}$.
We denote the shortest marking of $\sigma$ by $\mu({\sigma})$.
In the statement of Theorem \ref{thm.Rafi}, the function $[x]_{k}$ is equal to zero when $x < k$ and is equal to $x$ when $x \geq k$.
We also modify the $\log$ in the statement so that $\log x=0$ for $x\in[0,1]$.
\begin{thm}[\cite{Rafi}]\label{thm.Rafi}
There exists $k'>0$ such that for $k>k'$, for any $\sigma_{1},\sigma_{2}\in\mathcal{T}(S)$, the following holds.
Let $A := d_\mathcal{T}(\sigma_1,\sigma_2)$, and
$$B := \sum_Y[d_Y(\mu(\sigma_{1}), \mu(\sigma_{2}))]_{k}  + \sum_\alpha\log[d_\alpha(\mu(\sigma_1), \mu(\sigma_2))]_{k}.$$
Where in the first sum, $Y$ is taken over all subsurfaces of $S$ which are not three holed spheres nor annuli, and in the second sum $\alpha$ is taken over all essential simple closed curves.
Then there exist constants $C,c>0$ which depend only on $S$ and $k$ such that
$$\frac{1}{C}A-c \leq B \leq CA+c.$$
\end{thm}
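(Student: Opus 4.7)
The plan is to prove the coarse equivalence $\tfrac{1}{C}A - c \le B \le CA + c$ by handling the two inequalities separately, since they require quite different techniques.

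For the upper bound $B \le CA + c$, I would fix a Teichm\"uller geodesic $\sigma_t$ joining $\sigma_1$ and $\sigma_2$ and bound the rate of change of each summand in $B$ along it. For a non-annular subsurface $Y$, the projection $d_Y(\mu(\sigma_0),\mu(\sigma_t))$ changes by at most a bounded amount per unit Teichm\"uller time, which follows from coarse Lipschitz behavior of subsurface projections under Teichm\"uller flow together with Minsky's product region theorem when $\partial Y$ is short. For an annular $\alpha$, the twist parameter of the shortest marking grows multiplicatively, at rate roughly $e^t$, precisely while $\alpha$ is short along the geodesic, so the accumulated twist is comparable to $e^{\tau_\alpha}$ where $\tau_\alpha$ is the ``thin time'' of $\alpha$; taking $\log$ converts this into a linearly additive quantity, which is exactly why $\log[d_\alpha]_k$ appears in $B$ rather than $[d_\alpha]_k$. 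Choosing $k$ large and summing over the boundedly many subsurfaces and annuli that contribute nontrivially per unit time yields $B \le CA + c$.

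For the lower bound $A \le CB + c$, the plan is to construct a Teichm\"uller path from $\sigma_1$ to $\sigma_2$ whose length is controlled by $B$. The intermediate tool is the Masur--Minsky distance formula for the marking graph $\mathcal{M}(S)$, giving $d_{\mathcal{M}(S)}(\mu(\sigma_1),\mu(\sigma_2)) \asymp \sum_Y[d_Y]_k + \sum_\alpha[d_\alpha]_k$, an unlogarithmed sum. To pass from marking-graph distance to Teichm\"uller distance I would invoke Rafi's thick-thin decomposition of Teichm\"uller geodesics: in the thick part of $\mathcal{T}(S)$ the two metrics are comparable, whereas in a thin region around a short curve $\alpha$ the Teichm\"uller metric is logarithmically compressed relative to the accumulated twist, which is exactly the $\log$ in the definition of $B$. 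Gluing the thick and thin pieces via Minsky's product region theorem, with uniform constants, assembles the required Teichm\"uller path of length $\le CB + c$.

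The main obstacle is the lower bound, specifically the thick-thin gluing with uniform constants. One has to track precisely when each curve $\alpha$ enters and exits the thin region along the reconstructed path, arrange the twist data so that the logarithmic compression exactly compensates for the marking graph's linear count, and choose $k$ above the Behrstock constant so that Behrstock's inequality rules out double-counting of overlapping subsurfaces in the sum $B$. This bookkeeping of thick/thin transitions and of subsurface overlaps is the technical heart of Rafi's original argument.
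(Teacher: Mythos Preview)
The paper does not prove this theorem at all: it is stated as a quoted result from Rafi \cite{Rafi} and is used as a black box in the proof of Lemma~\ref{lem.finiteparallel}. So there is no ``paper's own proof'' to compare against. Your sketch is a reasonable outline of the strategy in Rafi's original paper, but for the purposes of this paper no proof is needed---you should simply cite \cite{Rafi} and move on.
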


We are now ready to prove Lemma \ref{lem.finiteparallel}.
\begin{proof}[Proof of Lemma \ref{lem.finiteparallel}]
Throughout the proof, we call a constant {\em universal} if it only depends on $S$ and the degree of $\pi:S\rightarrow S'$.
For a given $s\in \mathcal{C}(S)$, let $\sigma(s,gE)$ denote a point $\mathcal{T}(S)$ so that the shortest marking $\mu(s,gE):=\mu(\sigma(s,gE))$
contains a closest point projection of $s$ to $gE$.
Note that there may be several closest projections. 
We choose one of them, and fix it.
By considering the conjugacy of the covering, we may suppose $E\in\mathcal{P}(a,b,D_{0})$.
%

We proceed by induction. First, note that once we bound the degree of the covering, the statement of Lemma  \ref{lem.finiteparallel} is true for annuli.
Then we may suppose Lemma \ref{lem.finiteparallel} holds for any subsurface of $S$.
Let $D_{3}'$ be the constant so that Lemma \ref{lem.finiteparallel} holds for 
$D_{0} := M_{1}$ and $D_1 := D_3'$ for any subsurface of $S$.
Then let $D_{3}:=\max\{D_{3}', T_{1}\}$.

We now consider two cases.
The first case is where we can find four subsurfaces $Y_{i} (i = 1,2,3,4)$ such that 
\begin{enumerate}
	\item $d_\mathcal{C}(a,\partial Y_{i})>2D_{0}+2, d_\mathcal{C}(b,\partial Y_{i})>2D_{0}+2$,
	\item $d_\mathcal{C}(\partial Y_{i},\partial Y_{j})>3$,
	\item $d_{Y_{i}}(\mu(a,E),\mu(b,E))\geq  D_{3} + 6M_{1}$.
\end{enumerate}
Note that by the third condition and Theorem \ref{thm.MM2}, any geodesic in $\mathcal{C}(S)$ connecting $\mu(a,E)$ and $\mu(b,E)$
passes close to $\partial Y_{i}$'s, and we also suppose $\partial Y_{i}$'s appear in the order of the index.
By Lemma \ref{lem.intersection} we see that for any $gE\in \mathcal{P}(a,b,D_{0})$, 
we have $d_{Y_{i}}(\mu(a,gE),\mu(b,gE))\geq D_{3} + 4M_{1}$ and hence
$\partial{Y_{i}}$ are all contained in every $gE\in \mathcal{P}(a,b,D_{0})$.
Then again by Theorem \ref{thm.MM2}, we may choose components $y_{1}$ and $y_{4}$ of $\partial Y_{1}$ and $\partial Y_{4}$ respectively so that
$d_{Y_{2}}(y_{1}, y_{4})\geq D_{3}$.
Hence for any  $gE\in \mathcal{P}(a,b,D_{0})$, we have $d_{Y_{2}}(\mu(y_{1},gE),\mu(y_{4},gE))\geq D_{3}$.
By induction, for $gE\in \mathcal{P}(a,b,D_{0})$, the restriction $\pi\circ g^{-1}|Y_2$ of the covering is one of the universally bounded number of coverings from $Y_2$.
Moreover, once we fix the topology of a subsurface $Y'$,
there are only finitely many possible embedding $Y'\hookrightarrow S'$ up to the action of mapping classes on $S'$ that can be lifted to $S$ via $\pi:S\map S'$.
Let $\mathcal{P}(a,b,D_0,Y_{2})$ denote the subset of $\mathcal{P}(a,b,D_{0})$ whose elements correspond $g$'s that satisfy the following;  
$\pi|Y_{2}$ and $\pi\circ g^{-1}|Y_{2}$ are the same as coverings from $Y_{2}$,
and the image of $Y_{2}$ in $S'$ by the coverings are related by a mapping class $\phi_{g}$ which can be lifted to $S$.
We see that $\mathcal{P}(a,b,D_{0})$ can be decomposed into universally bounded number of subsets of type $\mathcal{P}(a,b,D_0,Y_{2})$.
To find a universal bound for the cardinality of $\mathcal{P}(a,b,D_{0})$, we only need to find a universal bound for the cardinality of each subset in the decomposition.
Hence, it suffices to find a universal bound for the number of elements in $\mathcal{P}(a,b,D_0,Y_{2})$.

Let $gE\in\mathcal{P}(a,b,D_0,Y_{2})$.
Note that if $\widetilde{\varphi}:S\map S$ is a lift with respect to $\pi$, then $\widetilde{\varphi}E = E$.
Hence by precomposing suitable lift $\widetilde{\phi_{g}}$ of $\phi_{g}$ to $g$, which we again denote by $g$ by abuse of notation, we may suppose 
that $\pi(g^{-1}(Y_{2})) = \pi(Y_{2})$ and $\pi(y_{j})\cap \pi(Y_{2}) = \pi g^{-1}(y_{j})\cap \pi(Y_2)$ for both $j = 1,4$.
Let $y_{2}'$ be a component of $\partial\pi(Y_{2})$.
Then by the above observation, 
we have for any component $z_2$ of $\pi^{-1}(y_{2}')$,
\begin{eqnarray}\label{eq.balance}
\frac{i(y_{2}',\pi(y_{1}))}{i(y_{2}', \pi(y_{4}))} = \frac{i(z_2, \pi^{-1}\pi(y_{1}))}{i(z_2, \pi^{-1}\pi(y_{4}))} = 
\frac{i(g(z_2), \pi^{-1}\pi(y_{1}))}{i(g(z_2), \pi^{-1}\pi(y_{4}))}.
 \end{eqnarray}
Note that since $\pi^{-1}(\pi(y_{1}))$ and $\pi^{-1}(\pi(y_{4}))$ fill the surface $S$, 
it determines a quadratic differential $q$ with horizontal foliation $\pi^{-1}(\pi(y_{1}))$ and vertical foliation $\pi^{-1}(\pi(y_{4}))$.
Given a Teichm\"uller geodesic $\Gamma:\mathbb{R}\rightarrow \mathcal{T}(S)$, 
a simple closed curve is said to be {\em balanced} at time $t$ on $\Gamma$ if the intersection number with horizontal foliation and vertical foliation of quadratic differential determined by $\Gamma$ and $t$ coincide.
By (\ref{eq.balance}), on the Teichm\"uller geodesic $\Gamma(q)$ determined by $q$, 
all component of $\pi^{-1}(y_{2}')\cup g\pi^{-1}(y_{2}')$ are balanced at the same time.
By Theorem \cite[Theorem 6.1]{Rafishort} applied to $\Gamma(q)$,
we see that $\pi^{-1}(y_{2}')\cup g\pi^{-1}(y_{2}')$ is a disjoint union of simple closed curves.
Since the number of disjoint simple closed curves is universally bounded from above, we may further decompose
$\mathcal{P}(a,b,D_0,Y_{2})$ into universally bounded number of subsets so that if $g_{1}E$ and $g_{2}E$ are in the same subset,
we have $g_{1}\pi^{-1}(y_{2}') = g_{2}\pi^{-1}(y_{2}')$.
We argue similarly for $Y_{3}$ and hence all we need is to find a universal bound for the number of $\{g_{i}E\}_{i\in I}$ with 
\begin{itemize}
	\item $g_{i}\pi^{-1}(y_{2}') = g_{j}\pi^{-1}(y_{2}')$, and
	\item $g_{i}\pi^{-1}(y_{3}') = g_{j}\pi^{-1}(y_{3}')$ for some component $y_{3}'$ of $\partial \pi (Y_{3})$
\end{itemize}
for any $i,j\in I$.
In this case, the Teichm\"uller geodesic determined by the quadratic differential 
with horizontal foliation $g_{i}\pi^{-1}(y_{2}')$ and vertical foliation $g_{i}\pi^{-1}(y_{3}')$
 is contained in $g_iE_\mathcal{T}$ for all $i\in I$.
Then by Lemma \ref{lem.symmetricqd}, we have a desired bound.
Thus we are done for the first case.



We now consider the case where we can not find subsurfaces satisfying the conditions of the first case.
The shadow of a Teichm\"uller geodesic $\Gamma$ is the set of all curves which are the shortest at some point on $\Gamma$.
In \cite{MM}, it is proved that the shadow of any Teichm\"uller geodesic is a (unparametrized) quasi-geodesic.
Hence there exists a universal constant $D_{4}$ such that for $hE\in\mathcal{P}(a,b,D_{0})$,
the shadow of the Teichm\"uller geodesic $\Gamma(\sigma(a,hE), \sigma(b,hE))$ is contained in 
$D_{4}$-neighborhood of the shadow of $\Gamma(\sigma(a,E), \sigma(b,E))$.
Then by assuming $D_{1}$ in the statement large enough, we may suppose that 
there are points $\sigma_1,\sigma_2$ on the Teichm\"uller geodesic $\Gamma(\sigma(a,E),\sigma(b,E))$ such that
\begin{itemize}
	\item $d_\mathcal{C}(\mu(\sigma_1), \mu(\sigma_2)) > D_{4} +2$, and
	\item $d_Y(\mu(\sigma_1),\mu(\sigma_2)) < D_3 + 6M_1$ for all proper subsurface $Y\subset S$,
	\item $d_Y(\mu(\mu(\sigma_1), hE),\mu(\mu(\sigma_2),hE)) < D_3 + 6M_1$ for all proper subsurface $Y\subset S$.
\end{itemize}
Let $D_{5}:=D_3 + 6M_1$ and $\sigma_1' := \sigma(\mu(\sigma_1),hE)$.
These condition together with Theorem \ref{thm.MM2} imply that
$d_Y(\mu(\sigma_1), \mu(\sigma_1'), hE))<2D_{5} +M_1$ for any proper subsurface $Y\subset S$.
Hence by Theorem \ref{thm.Rafi}, we see that there exists a universal constant $D_{6}$ and $\epsilon$ such that 
$d_\mathcal{T}(\sigma_1,\sigma_1')<D_{6}$ and the Teichm\"uller geodesic connecting $\sigma_1$ and $\sigma_1'$ is contained in the $\epsilon$-thick part of $\mathcal{T}(S)$.

Recall that the subgroup of $\mathrm{Mod}(S')$ that can be lifted via $\pi:S\map S'$ is of finite index, and the thick part of the moduli space of $S'$ is compact.
Hence we can find $h'\in G$ such that $h'\sigma\in hE_{\mathcal{T}}$ and we can connect $\sigma_1'$ and $h'\sigma_1$ by a Teichm\"uller geodesic which is contained in the $\epsilon$-thick part and of length universally bounded from above.
Thus we have a universal constant $D_{7}$ such that $\sigma_1$ and $h'\sigma_1$ can be connected by a path in the $d\epsilon$-thick part of length less than $D_{7}$.
Again, by the compactness of the thick part of the moduli space of $S$, 
the number of such mapping classes are universally bounded.
Hence we see that if $hE\in\mathcal{P}(a,b,D_{0})$, $hE$ has to pass through one of the universally bounded number of points.
By assuming $D_1$ large enough we may apply the same argument to different subarc of $\Gamma(\sigma(a,E),\sigma(b,E))$, 
and hence we see that there are two disjoint finite subsets of $\mathcal{T}(S)$ so that if $hE\in\mathcal{P}(a,b,D_{0})$, $hE$ has to pass through both subsets.
Since two points in Teichm\"uller space determines a quadratic differential, again by Lemma \ref{lem.symmetricqd}, we have a desired bound.
This completes the proof of Lemma \ref{lem.finiteparallel}.
\end{proof}
Finally we prove the following lemma which we used in the proof of Lemma \ref{lem.finiteparallel} twice.
\begin{lem}\label{lem.symmetricqd}
There exists $K>0$ which depends only on $S$ such that
for any $q\in\QD(S)$, $\sharp\{gE_\mathcal{T}\mid\Gamma(q)\subset gE_\mathcal{T}\}<K$.
\end{lem}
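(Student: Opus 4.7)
The plan is to reduce the lemma to the classical fact that the conformal automorphism group of a hyperbolic Riemann surface of finite type is universally bounded. The key observation is that the inclusion $\Gamma(q)\subset gE_\mathcal{T}$ forces $q$ to be the pullback of some $q'\in\QD(S')$ via the covering $\pi\circ g^{-1}:S\map S'$, so that the conjugate deck group $g\Delta g^{-1}$ (with $\Delta\subset \mathrm{Mod}(S)$ the deck group of $\pi$) lies inside the stabilizer of $q$ in $\mathrm{Mod}(S)$.

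First, the paper records that $\Pi:\mathcal{T}(S')\hookrightarrow \mathcal{T}(S)$ is an isometric embedding with respect to the Teichm\"uller metric (following \cite[Section~7]{RS}); combined with the uniqueness of Teichm\"uller geodesics this makes $\Pi(\mathcal{T}(S'))$ totally geodesic. Hence if $\Gamma(q)\subset gE_\mathcal{T}$, then $g^{-1}\Gamma(q) = \Pi(\Gamma')$ for some Teichm\"uller geodesic $\Gamma'$ in $\mathcal{T}(S')$. By Lemma~\ref{lem.hqd}, $\Gamma'$ is determined by a quadratic differential $q'$ on $S'$, and matching horizontal foliations at the base point shows that $q$ is a positive scalar multiple of $(\pi\circ g^{-1})^*q'$. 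Consequently every deck transformation of $\pi\circ g^{-1}$ preserves $q$; writing $\Delta_g:=g\Delta g^{-1}$, we conclude $\Delta_g\subset \mathrm{Stab}(q)$, where $\mathrm{Stab}(q)$ denotes the stabilizer of $q$ in $\mathrm{Mod}(S)$.

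Next I would observe that $\mathrm{Stab}(q)$ embeds in the conformal automorphism group of the underlying Riemann surface $\sigma_0\in\Gamma(q)$ (which $q$ determines), and the latter has cardinality bounded by a universal constant $M=M(S)$ by the Hurwitz bound and its analogues for surfaces of finite type. In particular $\mathrm{Stab}(q)$ has at most $2^M$ subgroups. Finally, identifying $\mathrm{Stab}(E_\mathcal{T})$ with the normalizer $N_{\mathrm{Mod}(S)}(\Delta)$ -- a mapping class preserves $\Pi(\mathcal{T}(S'))$ precisely when it normalizes $\Delta$, equivalently when it descends via $\pi$ -- the orbit--stabilizer theorem applied to the conjugation action of $\mathrm{Mod}(S)$ on its subgroups sets up a bijection between the cosets $g\,\mathrm{Stab}(E_\mathcal{T})$ and the conjugates $g\Delta g^{-1}$. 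Therefore $\sharp\{gE_\mathcal{T}\mid \Gamma(q)\subset gE_\mathcal{T}\}$ is bounded above by the number of subgroups of $\mathrm{Stab}(q)$ conjugate to $\Delta$ in $\mathrm{Mod}(S)$, and we may take $K:=2^M+1$.

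The step I expect to require the most care is the descent claim in the second paragraph: the hypothesis only gives that the Teichm\"uller geodesic $g^{-1}\Gamma(q)$ lies in $\Pi(\mathcal{T}(S'))$, and one must upgrade this to $g^{-1}q$ itself being $\Delta$-invariant and hence of the form $\pi^*q'$. This upgrade relies on the uniqueness in Lemma~\ref{lem.hqd} together with the compatibility of $\Pi$ with the Teichm\"uller flow, which is the content of calling $\Pi$ an isometric embedding of Teichm\"uller spaces.
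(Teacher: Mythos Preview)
Your approach is clean for \emph{regular} coverings but has a genuine gap in general. The paper allows $\pi:S\to S'$ to be an arbitrary finite (possibly orbifold) covering, with no normality assumption. When $\pi$ is not regular, the deck group $\Delta$ can fail to determine the covering --- indeed $\Delta$ may be trivial even when $\deg\pi>1$ (this happens whenever $\pi_*\pi_1(S)$ is self-normalizing in $\pi_1^{\mathrm{orb}}(S')$). In that situation $N_{\mathrm{Mod}(S)}(\Delta)=\mathrm{Mod}(S)$ while $\mathrm{Stab}(E_\mathcal{T})$ is a proper subgroup, so your asserted identification $\mathrm{Stab}(E_\mathcal{T})=N_{\mathrm{Mod}(S)}(\Delta)$ fails and the map $gE_\mathcal{T}\mapsto g\Delta g^{-1}$ is constant rather than injective. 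The bound $2^M$ therefore does not follow.

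The paper's proof sidesteps this by working with the singular flat structure of $q$ rather than with $\Delta$: starting from $\mathrm{Sing}(q)$ it builds an equivalence relation on $S$ generated by the two coverings $\pi$ and $\pi\circ g^{-1}$, obtains a common quotient $S/{\sim}$ through which both factor, and then bounds separately the finitely many possible types of $S/{\sim}$, the finitely many coverings $S'\to S/{\sim}$ and liftability conditions, and finally --- within each fixed combinatorial type --- the elements of $\mathrm{Stab}(q)$. Your Hurwitz-type bound on $\mathrm{Stab}(q)$ is exactly what is used in this last step, but the preceding combinatorics of common quotients is precisely what handles the non-regular case that the deck-group shortcut misses. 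If you wish to repair your argument, you would need to replace $\Delta$ by an invariant that actually determines $gE_\mathcal{T}$ (for instance the holomorphic covering map $\sigma_0\to\sigma_0'$ itself at a basepoint $\sigma_0\in\Gamma(q)$, together with a uniform de~Franchis--type finiteness statement), which is a substantially different line of reasoning.
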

\begin{proof}
By taking conjugation if necessary, we may suppose $E_\mathcal{T}\in\{gE_\mathcal{T}\mid\Gamma(q)\subset gE_\mathcal{T}\}$.
Recall that by integrating the square root, each non-zero element $q\in\QD(S)$ determines a singular Euclidean structure with horizontal and vertical foliation.
Let $\Sing(q)$ denote the set of singular points of the singular Euclidean structure.
This $\Sing(q)$ is finite.
Pick any $s\in\Sing(q)$, then we define $\Sigma_1(s):=\pi^{-1}\pi(s)$ where $\pi:S\map S'$ is the finite covering we fixed above.
Inductively define $\Sigma_{i+1}(s):= g\circ\pi^{-1}(\pi\circ g^{-1}(\Sigma_i(s)))$.
Since $\Sigma_i(s)\subset \Sigma_{i+1}(s)\subset \Sing(q)$, we eventually have $\Sigma_i(s) = \Sigma_{i+1}(s) (=:\Sigma(s))$ for large enough $i$.
Next, we pick any $x\in S\setminus \Sing(q)$.
There is a point $s'\in\Sing(q)$ such that we can connect $x$ and $s'$ by a single Euclidean geodesic $\gamma$.
The geodesic $\gamma$ has well defined angle $\theta_\gamma\mod \pi$.
Let $l_q(\gamma)$ denote the Euclidean length of $\gamma$.
Since there are only finitely many points from $\Sigma(s')$ with angle $\theta_\gamma$ and Euclidean distance $l_q(\gamma)$,
we get $\Sigma(x)\subset S\setminus\Sing(q)$ in the same way as above.
Thus we get an equivalence relation $x\sim y:\iff y\in\Sigma(x)$ on $S$. 
Since this relation is defined by composing local homeomorphisms $g$ and $\pi$, the quotient map $\pi':S\map S/\!\!\sim$ is a covering.
By construction, $\pi'$ factors through $\pi:S\map S'$ and we have two coverings $p,p_{g}:S'\rightarrow S/\!\!\sim$ 
such that
$p\circ\pi = p_{g}\circ\pi\circ g$ as covering maps.
Furthermore, since for each $x\in S/\!\!\sim$, we may find a small open neighborhood $U_{x}$ such that
on all component of $(\pi')^{-1}(U_{x})$, we can identify the quadratic differentials via $\pi$ and $\pi g^{-1}$,
we have a quadratic differential $q'$ with $(\pi')^{-1}(q') = q$.
Let us suppose that for $p,p_{g}:S'\rightarrow S/\!\!\sim$, 
we have $p_{*}(\pi_{1}(S')) = (p_{g})_{*}(\pi_{1}(S'))$.
Then there exists a homeomorphism $f_{g}:S'\rightarrow S'$ such that $p_{g} = p\circ f_{g}$.
We further suppose that $(f_{g}\circ\pi)_{*}(\pi_{1}(S)) = \pi_{*}\pi_{1}(S)$ in $\pi_{1}(S')$.
Then we can lift $f_{g}$ to $\widetilde{f_{g}}:S\rightarrow S$.
By construction, $(\widetilde{f_{g}})^{-1}g$ preserves $q$.
Suppose there is $hE_\mathcal{T}\not=gE_\mathcal{T}\in\{gE_\mathcal{T}\mid\Gamma(q)\subset gE_\mathcal{T}\}$
such that $hE_\mathcal{T}$ determines the same equivalence relation $\sim$ and  similarly as $g$, we have a map
$\widetilde{f_{h}}$ which is a lift of a homeomorphism $f_{h}:S'\rightarrow S'$ and $(\widetilde{f_{h}})^{-1}h$ preserves $q$.
If $(\widetilde{f_{g}})^{-1}g = (\widetilde{f_{h}})^{-1}h$, we have $h^{-1}g =  (\widetilde{f_{h}})^{-1}\widetilde{f_{g}}$ and hence
$hE_\mathcal{T}=gE_\mathcal{T}$.
Hence $(\widetilde{f_{g}})^{-1}g \not= (\widetilde{f_{h}})^{-1}h$.
The number of mapping classes that preserve $q$ is universally bounded.
Furthermore the number of possibility of $S/\!\!\sim$ is universally bounded and for each case
the number of coverings $S'\rightarrow S/\!\!\sim$ is also universally bounded in terms of $S$.
Since the number of subgroups of a fixed degree in $\pi_{1}(S')$ is also bounded,
there is an upper bound which depends only on $S$ for the number of $\{gE_\mathcal{T}\mid\Gamma(q)\subset gE_\mathcal{T}\}$.
The number of possible coverings from $S$ is finite and thus we complete the proof.
\end{proof}
\subsection{Exponential decay for the shadow of $E$}\label{sec.exponentialdecay}
By the work of Klarreich \cite{Kla} (see also Hamenst\"adt \cite{Ham}), the Gromov boundary $\partial\mathcal{C}(S)$ of 
$\mathcal{C}(S)$ is identified with the space $\mathcal{F}_\mathrm{min}(S)$ of minimal foliations.
There is a natural measure forgetting map from $\mathcal{UE}(S)$ to $\mathcal{F}_\mathrm{min}(S)$.
Hence we may consider the push forward of $\nu$ to $\mathcal{F}_\mathrm{min}(S)$,
which we again write as $\nu$ by abuse of notation.
This $\nu$ extends to $\bC(S):=\mathcal{C}(S)\cup\partial\mathcal{C}(S)$ by $\nu(A) = \nu(A\cap\partial\mathcal{C}(S))$
for $A\subset \bC(S)$.


For a subset $A\subset \bC(S)$, we define the shadow $S_p(A,r)$ for $r>0$ and $p\in \bC(S)$ by
$$S_p(A,r) := \bigcup_{a\in A}S_p(a,r).$$
We first prove the following lemma, which is a key step for showing Theorem \ref{thm.notlift}.
\begin{lem}[c.f. {\cite[Lemma 2.10]{Mah2012}}]\label{lem.setshadow}
There is a constants $K>0$ and $c < 1$, such that 
for any $r>0$ and $g\in G$,
$$\nu(S_1(gE,r)) < c^r, \mathbb{P}(\omega_n\in(S_1(gE,r))) < Kc^r, $$
and the constants 
$K$ and
$c$ depend on $\mu$ and $\pi:S\rightarrow S'$ but not on $r,g$ and $n$.
\end{lem}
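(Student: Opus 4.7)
The strategy is to extend Maher's shadow estimate for single points (Lemma \ref{lem.shadow}) to the quasi-convex set $gE$, using two key inputs: the uniform quasi-convexity of $gE$ (following from Theorem \ref{thm.RS}, since $\Pi_\mathcal{C}$ is a $Q$-quasi-isometric embedding into the $\delta$-hyperbolic space $\mathcal{C}(S)$, making $gE$ a $Q'$-quasi-convex subset with $Q'$ depending only on $\pi$ and $S$), and the measure-zero property $\nu(gE_\mathcal{T})=0$ from Lemma \ref{lem.notlift}.

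The first step is a geometric reduction of the set-shadow to a union of point-shadows. Fix a nearest-point projection $q \in gE$ of the base point $1$. By standard hyperbolic-space arguments for nearest-point projection onto quasi-convex sets, any $y \in S_1(gE, r)$ either satisfies $y \in S_1(q, r - C)$ (when $d_{\mathcal{C}}(1, q) \geq r$), or its nearest-point projection $p_y \in gE$ satisfies $d_{\mathcal{C}}(1, p_y) \geq r - C$ and $y \in S_1(p_y, r - C)$; here $C = C(\delta, Q')$. Crucially, these shadows along $gE$ are \emph{nested}: if $p'$ lies between $q$ and $p_y$ along a quasi-geodesic in $gE$ with $d_{\mathcal{C}}(1, p') \geq r - C$, then $S_1(p_y, r - C) \subset S_1(p', r - C')$. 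This reduces the union to one indexed by an ``extremal'' set $\Sigma_r \subset gE$ of points at depth approximately $r$ from $1$.

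The second step combines Lemma \ref{lem.shadow} applied to each $S_1(p, r-C)$ for $p \in \Sigma_r$ with a summation over $\Sigma_r$. For the probability bound, the uniform exponential decay $K_0 c_0^r$ of single-point shadows must overcome the growth of $|\Sigma_r|$, yielding $\mathbb{P}(\omega_n \in S_1(gE, r)) < K c^r$. For the $\nu$-measure bound, Lemma \ref{lem.notlift} guarantees that the nested intersection $\bigcap_r S_1(gE, r) \cap \partial\mathcal{C}(S)$ -- namely the limit set of $gE$ in $\mathcal{F}_\mathrm{min}(S)$ -- is $\nu$-null; combined with the $\mu$-stationarity of $\nu$ and the single-shadow estimate on $\nu$, this promotes the convergence $\nu(S_1(gE, r)) \to 0$ to the required exponential rate $c^r$.

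The main obstacle I expect is controlling the size of the extremal set $\Sigma_r$: naively $gE$ has exponential growth as a QI image of $\mathcal{C}(S')$, so that a point-by-point sum could a priori overwhelm the decay coming from Lemma \ref{lem.shadow}. The tree-like geometry of $gE$ inside the $\delta$-hyperbolic space $\mathcal{C}(S)$ ought to keep the extremal count polynomial in $r$, but pinning this down requires careful analysis of nearest-point projections. If direct counting proves too delicate, an alternative is to bypass it by mimicking Maher's inductive argument for Lemma \ref{lem.shadow}, replacing the non-atomic property of $\nu$ by the measure-zero statement of Lemma \ref{lem.notlift} as the base case.
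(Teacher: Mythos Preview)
Your primary approach --- covering $S_1(gE,r)$ by point-shadows $S_1(p,r-C)$ for $p$ in an ``extremal'' set $\Sigma_r\subset gE$ and summing the single-point estimate from Lemma~\ref{lem.shadow} --- cannot work, and the obstacle you flag is fatal rather than technical. The set $gE$ is quasi-isometric to $\mathcal{C}(S')$, which has exponential growth; the sphere in $gE$ of radius $r$ about the nearest-point projection of $1$ genuinely has exponentially many points, and no amount of nesting reduces this to polynomial. Your intuition that ``tree-like geometry \dots\ ought to keep the extremal count polynomial in $r$'' is wrong: already in a regular tree the sphere of radius $r$ has exponential cardinality. So summing $K_0c_0^r$ over $|\Sigma_r|$ gives nothing.

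Your fallback --- mimic Maher's nested-set induction with Lemma~\ref{lem.notlift} replacing non-atomicity --- is the correct route and is exactly what the paper does, but you are missing the decisive ingredient. Maher's induction (the paper's Lemma~\ref{lem.Mahkey}) requires, for every $x\in X_n\setminus X_{n+1}$, a bound $\nu_x(X_{n+2})\leq 1-\epsilon$; unwinding this one needs $\nu(S_1(x^{-1}gE,L'))<1-\epsilon$ for a \emph{fixed} $L'$ and \emph{all} translates $x^{-1}gE$. Lemma~\ref{lem.notlift} gives only $\nu(S_1(gE,r))\to 0$ for each $g$ separately, with no uniformity in $g$. The paper obtains uniformity via Lemma~\ref{lem.finiteparallel}: only boundedly many translates $gE$ can pass $D_0$-close to two far-apart points $a_1,a_2$, so all but finitely many $S_1(gE,r)$ miss one of two fixed positive-$\nu$-measure shadows $S_1(a_i,K)$. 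This finiteness lemma is the real content here (its proof uses subsurface projections, Rafi's distance formula, and Lemma~\ref{lem.symmetricqd}), and your proposal does not invoke it or any substitute.
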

We prove Lemma \ref{lem.setshadow} by borrowing several arguments from the proof of \cite[Lemma 2.10]{Mah2012}.
In \cite{Mah2012}, Maher uses several lemmas from \cite{CM}, which are applications of Lemma \ref{lem.fourpt} below.
Instead of using those lemmas, we only use Lemma \ref{lem.fourpt} since the proof of each lemma in \cite{CM} that we need 
is short and elementary.
\begin{lem}[{see for example \cite[Proposition 6.7]{Bow2}}]\label{lem.fourpt}
Let $(X,d_X)$ be a $\delta$-hyperbolic space.
Then there is a constant $K_1$ which depends only on $\delta$ with the following property.
For any four points $x_1,x_2,x_3,x_4\in X$, 
there is an embedded tree $T$ connecting the four point such that 
\begin{subequations}
\begin{eqnarray}
d_T(x_i,x_j)\leq d_X(x,y) + K_1\label {eq.t-dis}\\
(x_i\cdot x_j)_{x_k} - 2K_1 \leq (x_i \cdot x_j)^T_{x_k} \leq (x_i \cdot x_j)_{x_k} + K_1\label{eq.t-gromov}
\end{eqnarray}
\end{subequations}
for $1\leq i,j\leq 4$.
Where $d_T$ denotes the distance in $T$, and for $a,b,c\in T$, $(a,b)_c^T$ denotes the Gromov product with respect to $d_T$.
\end{lem}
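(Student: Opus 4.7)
The plan is to reduce to the standard four-point characterization of $\delta$-hyperbolicity and then build the approximating tree $T$ combinatorially from the pairwise distances of $x_1,\dots,x_4$. First I would recall the equivalent formulation (see e.g.\ \cite{Bow2}): in a $\delta$-hyperbolic space, among the three sums
$$S_{12,34}:=d_X(x_1,x_2)+d_X(x_3,x_4),\;\; S_{13,24}:=d_X(x_1,x_3)+d_X(x_2,x_4),\;\; S_{14,23}:=d_X(x_1,x_4)+d_X(x_2,x_3),$$
the largest two differ by at most $2\delta$. This is the only place the hyperbolicity of $X$ enters.

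After relabelling so that $S_{12,34}$ is the smallest of the three sums, I would build an abstract metric tree $T$ with leaves $x_1,\dots,x_4$, two internal vertices $u,v$ (possibly coinciding), and combinatorics in which $\{x_1,x_2\}$ meets at $u$ and $\{x_3,x_4\}$ meets at $v$. Edge lengths are assigned from Gromov products in $X$: for each leaf edge, $\ell(x_i,u):=(x_j\cdot x_k)_{x_i}$ for the two indices $j,k$ on the opposite cherry (averaging over the two possible choices of such a pair to remove the ambiguity), and the inner edge length is $\ell(u,v):=\tfrac12(S_{13,24}-S_{12,34})$. These formulas recover the ambient distances exactly if $X$ is itself a tree; nonnegativity of the $\ell$'s follows from the ordinary triangle inequality for leaf edges and from the four-point condition for $\ell(u,v)$.

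The verification of (\ref{eq.t-dis}) and (\ref{eq.t-gromov}) is then essentially a direct computation: each $d_T(x_i,x_j)$ is an explicit linear combination of a few of the $d_X(x_k,x_l)$, and the discrepancy $d_T(x_i,x_j)-d_X(x_i,x_j)$ simplifies, via the identity $2(a\cdot b)_c=d(a,c)+d(b,c)-d(a,b)$, to a bounded multiple of $\max\{S_{13,24},S_{14,23}\}-\min\{S_{13,24},S_{14,23}\}\leq 2\delta$, yielding (\ref{eq.t-dis}) with $K_1=O(\delta)$. Applying the same identity a second time in both $T$ and $X$ and feeding (\ref{eq.t-dis}) into each of the three distances that appear gives (\ref{eq.t-gromov}) with an extra factor of two, producing the asymmetric bound in the statement.

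The main obstacle is purely bookkeeping: one has to handle uniformly the degenerate cases where $u=v$ or where two of the three sums $S_{\bullet,\bullet}$ tie within $2\delta$ (so the combinatorial type of $T$ is only determined up to bounded error), and the ambiguity in the Gromov-product formulas for the leaf-edge lengths (any two natural choices differ by at most $O(\delta)$, again by the four-point condition). Once all these discrepancies are absorbed into a single constant $K_1$ depending only on $\delta$, the lemma follows.
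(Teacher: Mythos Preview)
The paper does not actually prove this lemma: it is stated with a citation to Bowditch's course notes and used as a black box, so there is no ``paper's own proof'' to compare against. Your sketch is the standard argument one finds in the references (reduce to the four-point inequality, then build the tree combinatorially from the six pairwise distances), and it is essentially correct.

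One small slip worth flagging: your formula for the leaf-edge lengths is off as written. If $\{x_1,x_2\}$ is the cherry at $u$ and $\{x_3,x_4\}$ the cherry at $v$, then in a genuine tree one has $d_T(x_1,u)=(x_2\cdot x_3)_{x_1}=(x_2\cdot x_4)_{x_1}$, i.e.\ the Gromov product of the \emph{sibling} with a leaf from the opposite side, whereas $(x_3\cdot x_4)_{x_1}=d_T(x_1,u)+d_T(u,v)$. Your parenthetical about ``averaging over the two possible choices'' suggests you may have had the right formula in mind and just misdescribed which pair $j,k$ ranges over; either way this is a bookkeeping correction that does not affect the structure of the argument, and once fixed the computations for (\ref{eq.t-dis}) and (\ref{eq.t-gromov}) go through exactly as you outline.
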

Note that the only combinatorial type of the tree up to reindexing is as depicted in Figure \ref{fig.tree}.
\begin{figure}[htp]
\begin{center}
\includegraphics[scale = 0.25]{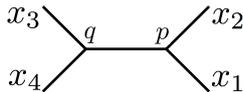}
\caption{Approximate tree.}
\label{fig.tree}
\end{center}
\end{figure}

We will use the following lemma in \cite{Mah2012}.
\begin{lem}[{\cite[Proposition 2.12.]{Mah2012}}]\label{lem.pointshadowdecays}
For any $\epsilon > 0$, there is a constant $K_2(\epsilon)$ which depends on $\epsilon$ and $\mu$,
such that if $r \geq K_2(\epsilon)$, then $\nu(S_1(x, r)) < \epsilon$.
\end{lem}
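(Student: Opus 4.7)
The plan is a geometric reduction to the case $d_\mathcal{C}(1,x) = r$, followed by a compactness-contradiction argument driven by the non-atomicity of $\nu$.

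\textbf{Reduction.} The Gromov product satisfies $(x \cdot y)_1 \leq d_\mathcal{C}(1,x)$, so $S_1(x,r)$ is empty unless $d_\mathcal{C}(1,x) \geq r$. For such $x$, pick $x' \in [1,x]$ with $d_\mathcal{C}(1,x') = r$. Since $x'$ lies on a geodesic between $1$ and $x$, we have $(x' \cdot x)_1 = r$. The standard $\delta$-hyperbolic inequality
\[
(x' \cdot y)_1 \;\geq\; \min\bigl\{(x' \cdot x)_1,\; (x \cdot y)_1\bigr\} - \delta,
\]
which is immediate from Lemma \ref{lem.fourpt}, shows $S_1(x,r) \subset S_1(x', r - \delta)$. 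So it suffices to produce $K_2(\epsilon)$ uniformly over $x$ with $d_\mathcal{C}(1,x) = r$, at the cost of shifting $r$ by $\delta$.

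\textbf{Contradiction setup.} Suppose the lemma fails for some $\epsilon_0 > 0$: there exist $x_k \in \mathcal{C}(S)$ with $d_\mathcal{C}(1,x_k) = r_k \to \infty$ and $\nu(S_1(x_k, r_k)) \geq \epsilon_0$. Viewing the $x_k$ as points of the compact space $\mathcal{PMF}(S)$, extract a subsequence with $x_k \to \omega \in \mathcal{PMF}(S)$; since $d_\mathcal{C}(1,x_k) \to \infty$, the limit is a minimal foliation, so via Klarreich's identification $\omega \in \partial\mathcal{C}(S)$ and $x_k \to \omega$ in $\bC(S)$.

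\textbf{Key claim:} $\limsup_k S_1(x_k, r_k) \subset \{\omega\}$ as subsets of $\bC(S)$. Indeed, if $y \in \limsup_k S_1(x_k, r_k)$, then $(x_k \cdot y)_1 \geq r_k$ for infinitely many $k$. The bound $(x_k \cdot y)_1 \leq d_\mathcal{C}(1,y)$ forces $y \notin \mathcal{C}(S)$, hence $y \in \partial\mathcal{C}(S)$. Because $x_k \to \omega$ means $(x_k \cdot x_m)_1 \to \infty$ as $k,m \to \infty$, the four-point inequality yields
\[
(y \cdot x_m)_1 \;\geq\; \min\bigl\{(y \cdot x_k)_1,\; (x_k \cdot x_m)_1\bigr\} - \delta \;\longrightarrow\; \infty,
\]
so $y$ represents the same boundary point as $(x_m)$, forcing $y = \omega$.

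\textbf{Conclusion.} Reverse Fatou for the probability measure $\nu$ now gives
\[
\epsilon_0 \;\leq\; \limsup_k \nu(S_1(x_k, r_k)) \;\leq\; \nu\bigl(\limsup_k S_1(x_k, r_k)\bigr) \;\leq\; \nu(\{\omega\}) \;=\; 0,
\]
the final equality being the non-atomicity of $\nu$ from Theorem \ref{thm.u-stationary}. This contradiction proves the lemma.

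The main obstacle is the boundary-convergence step of the key claim: one must correctly identify the topology on $\bC(S)$ (via Klarreich and $\mathcal{PMF}(S)$) so that subsequential limits of $x_k$ exist in $\partial\mathcal{C}(S)$ and interact with the Gromov product as expected. Once that is pinned down, the four-point inequality (Lemma \ref{lem.fourpt}) combined with the non-atomicity of $\nu$ finishes the argument cleanly; note that $\mu$-stationarity enters only through the properties of $\nu$ guaranteed by Theorem \ref{thm.u-stationary}, so the constant $K_2$ depends on $\mu$ only through $\nu$.
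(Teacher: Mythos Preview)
Your overall strategy---contradiction via reverse Fatou and non-atomicity of $\nu$---is the right one; the paper itself does not reproduce a proof (it cites Maher), but the closely analogous argument given for Lemma~\ref{lem.setshadowdecays} is in the same spirit.

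There is, however, a genuine gap in your ``Contradiction setup''. You extract a subsequential limit $\omega$ of the curves $x_k$ in $\mathcal{PMF}(S)$ and then assert that $d_\mathcal{C}(1,x_k)\to\infty$ forces $\omega$ to be minimal and $x_k\to\omega$ in $\bC(S)$. This implication is false. Take a pseudo-Anosov $\phi$, a curve $\alpha$, and set $y_k=T_\alpha^{m_k}(\phi^k\gamma)$ with $m_k\to\infty$ rapidly. Since $T_\alpha$ fixes $\alpha$, one has $d_\mathcal{C}(y_k,\alpha)=d_\mathcal{C}(\phi^k\gamma,\alpha)\to\infty$, so $d_\mathcal{C}(1,y_k)\to\infty$; yet the standard Dehn-twist intersection estimate $i(T_\alpha^m c,d)\sim |m|\,i(c,\alpha)\,i(\alpha,d)$ shows $y_k\to\alpha$ in $\mathcal{PMF}(S)$, which is a simple closed curve, not a minimal foliation. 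Klarreich's identification gives nothing here: convergence in $\mathcal{PMF}(S)$ to a non-minimal point does not yield any convergence in $\bC(S)$. You flag this step yourself as ``the main obstacle'', and indeed it does not go through as written.

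The repair is a simple reorganization that avoids $\mathcal{PMF}$-compactness altogether. Apply reverse Fatou \emph{first}: from $\nu(S_1(x_k,r_k))\geq\epsilon_0$ for all $k$ one gets $\nu(\limsup_k S_1(x_k,r_k))\geq\epsilon_0>0$. Since $\nu$ is supported on $\partial\mathcal{C}(S)$, pick any $\omega$ in $\limsup_k S_1(x_k,r_k)\cap\partial\mathcal{C}(S)$. By definition there is a subsequence with $(x_k\cdot\omega)_1\geq r_k\to\infty$, and the four-point inequality then gives $(x_k\cdot x_l)_1\geq\min\{(x_k\cdot\omega)_1,(\omega\cdot x_l)_1\}-\delta\to\infty$, i.e.\ $x_k\to\omega$ in $\bC(S)$ along this subsequence. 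Now your key claim and a second application of reverse Fatou (to the subsequence, where $\nu(S_1(x_k,r_k))\geq\epsilon_0$ still holds) give $\epsilon_0\leq\nu(\{\omega\})=0$, the desired contradiction.
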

For the proof of Lemma \ref{lem.setshadow}, we also prepare the following lemma.
\begin{lem}\label{lem.setshadowdecays}
There exists $\epsilon > 0$ and a constant $K_3$ which depend only on $\mu$ and $\pi:S\rightarrow S'$,
such that if $r \geq K_3$, then $\nu(S_1(gE, r)) < 1-\epsilon$ for any $g\in G$.
\end{lem}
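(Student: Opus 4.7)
The plan is to argue by contradiction. Suppose the lemma fails, so there exist sequences $r_n \to \infty$ and $g_n \in G$ with $\nu(S_1(g_n E, r_n)) \to 1$. I split the analysis according to $d_{\mathcal{C}}(1, g_n E)$. If this distance tends to infinity along a subsequence, then since $gE$ is quasi-convex in $\mathcal{C}(S)$ by Theorem \ref{thm.RS}, every geodesic from $1$ to a point of $g_n E$ passes within a uniform constant of the nearest-point projection $p_n$ of $1$ onto $g_n E$, and a standard Gromov-product estimate yields
\[
S_1(g_n E, r_n) \subseteq S_1(p_n, \, d_{\mathcal{C}}(1, p_n) - K')
\]
for a universal constant $K'$. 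Lemma \ref{lem.pointshadowdecays} then forces $\nu(S_1(g_n E, r_n)) \to 0$, a contradiction. Hence after passing to a subsequence we may assume $d_{\mathcal{C}}(1, g_n E) \leq N$ for some fixed $N$.

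In this close regime, I further distinguish whether the cosets $\{g_n E\}$ take finitely or infinitely many distinct values. If only finitely many, pigeonhole produces a single $gE$ with $\nu(S_1(gE, r_{n_k})) \to 1$ along a subsequence $r_{n_k} \to \infty$. The shadows $S_1(gE, r)$ are nested and decreasing in $r$, so by monotone convergence $\nu(S_1(gE, r_{n_k})) \to \nu(gE^\partial)$, where $gE^\partial$ is the closure of $gE$ in $\partial\mathcal{C}(S)$. Since $\nu$ is supported on uniquely ergodic foliations (Theorem \ref{thm.u-stationary}) and $gE^\partial$ corresponds via the measure-forgetting map to a subset of $g\Pi(\PMF(S'))$, Lemma \ref{lem.notlift} gives $\nu(gE^\partial) = 0$, contradicting the limit being $1$.

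In the remaining subcase, the cosets $\{g_n E\}$ are pairwise distinct along a subsequence. Here I apply Lemma \ref{lem.finiteparallel} with $D_0$ chosen larger than the quasi-convexity constant of $gE$ plus $N$, yielding constants $D_1, D_2$. I select $D_2 + 1$ indices $n_1 < \cdots < n_{D_2+1}$ with pairwise distinct $g_{n_k} E$, with each $r_{n_k}$ sufficiently large, and with $1 - \nu(S_1(g_{n_k} E, r_{n_k})) < 1/(2(D_2+1))$. By inclusion-exclusion, $\nu\bigl(\bigcap_k S_1(g_{n_k} E, r_{n_k})\bigr) \geq 1/2 > 0$, so the intersection is non-empty; pick $y$ in it. For each $k$, the base point $1$ lies within $N$ of $g_{n_k} E$ while the shadow condition provides a point on $[1, y]$ at depth at least $r_{n_k} - K$ within a uniform constant of $g_{n_k} E$; quasi-convexity then forces a long initial segment of $[1, y]$ to lie within $D_0$ of $g_{n_k} E$. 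Choosing two points $a, b$ on $[1, y]$ at depths approximately $D_0 + 1$ and $D_0 + D_1 + 2$, we have $d_{\mathcal{C}}(a, b) > D_1$ and both $a, b$ lie within $D_0$ of each of the $D_2 + 1$ distinct cosets $g_{n_k} E$, directly contradicting Lemma \ref{lem.finiteparallel}.

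The main obstacle I expect is the geometric step in the last paragraph: combining the two pieces of information---that each $g_{n_k} E$ passes within $N$ of the base point $1$ and within a constant of some deep point of $[1, y]$---to conclude via quasi-convexity that a common long initial segment of $[1, y]$ is simultaneously close to every $g_{n_k} E$. Making this uniform in $k$ while tracking dependencies on $N$, $\delta$, and the quasi-isometry constants of Theorem \ref{thm.RS} is the delicate part.
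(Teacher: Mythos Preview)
Your argument is correct and uses the same two key inputs as the paper (Lemma~\ref{lem.notlift} and Lemma~\ref{lem.finiteparallel}), but the organization is genuinely different. The paper proceeds directly rather than by contradiction: it first establishes, for each \emph{fixed} $gE$, that $\nu(S_1(gE,r))\to 0$ as $r\to\infty$ (this is your Subcase~2a); then it picks two points $a_1,a_2$ in the semigroup orbit, far apart and with $\nu(S_1(a_i,K))>0$, and sets $\epsilon=\min_i\nu(S_1(a_i,K))$. For large $r$, either $S_1(gE,r)$ misses one of the $S_1(a_i,K)$, giving $\nu(S_1(gE,r))<1-\epsilon$ immediately, or $gE$ passes near both $a_i$, placing it in the finite set $\mathcal{P}(a_1,a_2,D_0)$; for that finite list one takes $K_3$ to be the maximum of the individual thresholds. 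This avoids your case split on $d_{\mathcal C}(1,g_nE)$ and the inclusion--exclusion step entirely. Your route, on the other hand, makes the role of Lemma~\ref{lem.finiteparallel} more vivid: it is exactly what rules out infinitely many distinct translates all shadowing a common geodesic ray.

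One small imprecision to fix in Case~1: the containment you state should read
\[
S_1(g_nE,r_n)\subseteq S_1\bigl(p_n,\ \min\{d_{\mathcal C}(1,p_n),\,r_n\}-K'\bigr),
\]
since when $r_n<d_{\mathcal C}(1,p_n)$ the Gromov-product inequality $(p_n\cdot y)_1\geq\min\bigl((p_n\cdot e)_1,(e\cdot y)_1\bigr)-\delta$ only yields the smaller parameter. As both $r_n$ and $d_{\mathcal C}(1,p_n)$ tend to infinity along your subsequence, Lemma~\ref{lem.pointshadowdecays} still applies and the conclusion is unaffected. Similarly, in Subcase~2b the constant $D_0$ should absorb a few additional $\delta$'s coming from the thin-triangle comparison of $[1,y]$, $[1,e_k]$, and $[f_k,e_k]$, not just $N$ plus the quasi-convexity constant; since you are free to enlarge $D_0$ before invoking Lemma~\ref{lem.finiteparallel}, this is harmless.
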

\begin{proof}
First we prove that for a given $g\in G$ and $\epsilon>0$, there is some number $r_{g,\epsilon}$ such that $\nu(S_1(gE,r_{g,\epsilon}))<\epsilon$.
The argument below is almost the same as \cite[Proposition 2.12]{Mah2012}.
Suppose contrary that there exists a sequence $\{r_i\}_{i\in I}$ with $r_i\rightarrow\infty$ and $\nu(S_1(gE,r_i))\geq\epsilon$.
Then since $S_1(gE,r_i)\subset S_1(gE,r_j)$ if $j>i$, we have $U:= \cap_{i\in I} S_1(gE,r_i)\geq\epsilon$.
But if $\lambda\in U$, there is a sequence $\{x_i\}$ of points in $gE$ such that $(x_i\cdot\lambda)_1\rightarrow\infty$.
Since $gE$ is a quasi-isometrically embedded image of $\mathcal{C}(S')$, we have $\lambda\in gE$.
By Lemma \ref{lem.notlift}, we have a contradiction.

Then recall that $S_1(a,r)$ can also be regarded as the set of points with geodesics connecting $1$ and those points pass through certain neighborhood of $a$.
This, together with the hyperbolicity of $\mathcal{C}(S)$, and the fact $gE$ is a quasi-isometrically embedded image of $\mathcal{C}(S')$, implies 
for sufficiently large $r,K$,
if $S_1(gE,r)$  intersects both $S_1(a_1,K)$ and $S_1(a_2,K)$ with $a_1$ and  $a_2$ sufficiently far apart, 
then $gE\in\mathcal{P}(a_1,a_2, D_{0})$ for some constant $D_{0}$ depending only on $S$ but not on $d_\mathcal{C}(a_1,a_2)$.
Let us take $a_1$ and $a_2$ so that $d_\mathcal{C}(a_1,a_2)$ is sufficiently large so that we can apply Lemma \ref{lem.finiteparallel}.
Furthermore, we may choose $a_1,a_2$ from the semigroup generated by the support of $\mu$ so that we have $\nu(S_1(a_i,K))>0$ for $i=1,2$.
Let $\epsilon:=\min(\nu(S_1(a_1,K), \nu(S_1(a_2,K))$.
Then suppose $S_1(gE, r)$ is disjoint from $S_1(a_i,K)$ for one of $i=1,2$, then $\nu(S_1(gE, r)) < 1-\epsilon$.
On the other hand, by Lemma \ref{lem.finiteparallel}, there are only bounded number of many $g_{j}E$ such that $S_1(g_jE, r)$ intersects 
$S_1(a_i,K)$ for both $i=1,2$.
For each $j$ there exists $r_j$ such that $\nu(S_1(g_jE, r_{j}))<1-\epsilon$.
Then for  $K_{3}:=\max(r_j,r)$, we have $\nu(S_1(gE, r)) < 1-\epsilon$ for any $r>K_{3}$ and $g\in G$.
\end{proof}
We recall the following lemma which is a version of the lemma due to Maher.
For later convenience, we slightly modify the constants in (4) and (5), but the proof goes exactly the same way as \cite{Mah2012}.
\setcounter{equation}{0}
\begin{lem}[{\cite[Lemma 2.11.]{Mah2012}
}]\label{lem.Mahkey}
Let $\mu$ be a probability distribution of finite support of diameter $D$.
Let $X_0 \supset X_1 \supset X_2 \supset \dots$ be a sequence of nested closed subsets of 
$\bC(S)$ with the following properties:
\begin{eqnarray}
1 &\not\in & X_0 \\
(\mathcal{C}(S) \setminus X_n) \cap X_{n+1} &=& \emptyset\\
d_\mathcal{C}(\mathcal{C}(S) \setminus X_n, X_{n+1}) &\geq & D
\end{eqnarray}
Furthermore, suppose there is a constant 
$0 < \epsilon < 1$ such that, for any $x \in X_n \setminus X_{n+1}$ which is the translate of the base point $p$ by $x\in G$,  
\begin{eqnarray}
\nu_x(X_{n+2}) \leq 1-\epsilon\\
\nu_x(\mathcal{C}(S) \setminus X_{n-1})\leq\epsilon/2
\end{eqnarray}
where $\nu_x(A):= \nu(x^{-1}A)$ for any $A\subset\bC(S)$.
Then there are constants $c < 1$ and $K$, which depend only on $\epsilon$ and $\mu$, such that 
$\nu(X_n) < c^n$ and $\mathbb{P}(\omega_i\in X_n) < Kc^n$ for all $i\in \mathbb{N}$.
\end{lem}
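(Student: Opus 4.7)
The plan is to adapt Maher's argument from \cite[Lemma 2.11]{Mah2012}, keeping in mind that only the final constants are affected by the modified constants in conditions (4) and (5). The proof naturally splits into two parts: first establishing the exponential bound $\nu(X_n) < c^n$ on the stationary measure, and then deducing $\mathbb{P}(\omega_i \in X_n) < Kc^n$ uniformly in $i$.

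For the stationary measure bound, the main input is the $\mu$-stationarity identity $\nu(X_n) = \mathbb{E}[\nu_{\omega_k}(X_n)]$, valid for every $k \geq 0$. I would partition sample paths according to the level $m(\omega_k) := \sup\{m : \omega_k \in X_m\}$ of the walk after $k$ steps (setting $m(y) = -1$ if $y \notin X_0$). Condition (4) yields the key bound $\nu_{\omega_k}(X_n) \leq \nu_{\omega_k}(X_{m(\omega_k)+2}) \leq 1-\epsilon$ whenever $m(\omega_k) \leq n-2$, while on the complementary event the probability is bounded by $\mathbb{P}(\omega_k \in X_{n-1})$, which one controls inductively. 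Iterating the bound in $k$ and combining with condition (5), which prevents $\nu_x$ from concentrating too far below the level of $x$, produces a recursive inequality that upon choosing $k$ comparable to $n$ yields exponential decay of $\nu(X_n)$.

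For the random walk bound, I would track the level process $Y_i := I(\omega_i)$, where $I(y) := \sup\{m : y \in X_m\}$ (with $I(y) = -1$ if $y \notin X_0$). Condition (3) together with the bounded diameter $D$ of $\mathrm{supp}(\mu)$ forces the increments of $Y_i$ to be bounded, so the walk cannot skip levels. Conditions (4) and (5) together imply that from any point $x$ at level $m$, the measure $\nu_x$ places mass at least $\epsilon/2$ on $X_{m-1}\setminus X_{m+2}$, encoding a negative drift. Combining this drift estimate with the first-part bound on $\nu$ via a comparison between $\nu$ and the time-$i$ distribution of the walk (using a Markov-type argument conditioned on the hitting time of each level) gives the required uniform-in-$i$ exponential bound on $\mathbb{P}(\omega_i \in X_n)$.

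The main obstacle is the careful passage from the asymptotic quantity $\nu_x$, which describes the limiting behavior of the walk started at $x$, to effective finite-time hitting estimates. Since condition (4) gives only a single decrement of factor $(1-\epsilon)$ per two-level pair and does not iterate directly (because $\nu_x$ is not itself stationary for a sub-walk), the proof must compose many such decrements along the trajectory using the Markov property and the first-step decomposition of $\nu$. This is the technical core of Maher's argument; the modification of constants in (4) and (5) only changes the final values of $c$ and $K$ without affecting the structure.
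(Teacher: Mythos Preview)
The paper does not give its own proof of this lemma: it simply records it as a version of \cite[Lemma 2.11]{Mah2012}, notes that the constants in (4) and (5) have been slightly modified for convenience, and states that ``the proof goes exactly the same way as \cite{Mah2012}'' (with a pointer to a correction on Maher's webpage). So there is nothing in the paper to compare against beyond the citation.

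Your sketch is a reasonable outline of Maher's strategy---iterating stationarity $\nu=\mu^{\ast k}\ast\nu$ and partitioning by the level of $\omega_k$ to harvest a factor of $(1-\epsilon)$ from condition (4), with condition (5) preventing escape below, and then upgrading to the finite-time bound via the bounded-step level process guaranteed by (3). The one place where your description is a bit loose is the passage from the $\nu$-estimate to the uniform-in-$i$ bound on $\mathbb{P}(\omega_i\in X_n)$: in Maher's argument this is done by a direct recursion on the distribution of the level process (using that steps change the level by at most one and that the harmonic measure from any starting point satisfies (4)--(5)), rather than by a ``comparison between $\nu$ and the time-$i$ distribution.'' If you intend to fill in details, follow Maher's recursion rather than trying to transfer the $\nu$-bound post hoc.
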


Then, to prove Lemma \ref{lem.setshadow}, it suffices to prove
\begin{lem}\label{lem.key}
There exists $L $ which depends on $\mu,\delta$ with the following property.
The sets $X_n:=S_1(gE,L(n+1))$ for all $n\in\mathbb{N}$ form a sequence of nested sets which satisfies
(1)-(5) in Lemma \ref{lem.Mahkey}.
\end{lem}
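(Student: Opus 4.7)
The plan is to choose a single constant $L$, depending on $\mu$, $\delta$, and $\pi: S \to S'$, such that the nested family $X_n := S_1(gE, L(n+1))$ fulfills all five requirements of Lemma \ref{lem.Mahkey}. Conditions (1)--(3) are immediate from the definition of a shadow together with the Gromov-product estimate $|(a \cdot y)_1 - (a \cdot y')_1| \leq d_\mathcal{C}(y, y')$: the base point satisfies $(a \cdot 1)_1 = 0$, so $1 \notin X_0$; the inclusion $X_{n+1}\subseteq X_n$ follows since shadows shrink in the parameter; and the separation bound $d_\mathcal{C}(\mathcal{C}(S)\setminus X_n,\, X_{n+1}) > L$ is at least $D$ provided we demand $L \geq D$.

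For condition (4), I would reduce to Lemma \ref{lem.setshadowdecays} by establishing the geometric inclusion
\[
x^{-1} X_{n+2} \;\subseteq\; S_1(x^{-1} g E, L)
\]
for every $x \in X_n \setminus X_{n+1}$, whereupon $\nu_x(X_{n+2}) = \nu(x^{-1}X_{n+2}) \leq 1 - \epsilon$ follows from Lemma \ref{lem.setshadowdecays} applied to the translate $x^{-1}gE$ with $r = L \geq K_3$. To verify the inclusion, pick $z \in x^{-1}X_{n+2}$ together with $a \in gE$ realizing $(a \cdot xz)_1 \geq L(n+3)$, and compute via the $x$-isometry that
\[
(x^{-1}a \cdot z)_1 \;=\; (a \cdot xz)_x \;=\; \tfrac{1}{2}\bigl(d(a,x) + d(z,1) - d(a,xz)\bigr).
\]
Lower-bounding $d(a,x)$ through $(a \cdot x)_1 < L(n+2)$ (valid for every $a \in gE$ because $x \notin X_{n+1}$), upper-bounding $d(a,xz)$ through $(a \cdot xz)_1 \geq L(n+3)$, and absorbing the residual $d(x,1) - d(xz,1)$ via the triangle inequality $d(xz, 1) \leq d(x, 1) + d(z, 1)$, the four terms consolidate to $(x^{-1}a \cdot z)_1 \geq L$.

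For condition (5) I would argue in parallel, replacing the set-shadow estimate by the point-shadow estimate. Concretely, I would show
\[
x^{-1}(\bC(S) \setminus X_{n-1}) \cap \partial \mathcal{C}(S) \;\subseteq\; S_1(x^{-1}, L-\delta),
\]
so that Lemma \ref{lem.pointshadowdecays} caps its $\nu$-mass by $\epsilon/2$ once $L \geq \delta + K_2(\epsilon/2)$. For $z$ in the left-hand side, choose $a \in gE$ witnessing $(a \cdot x)_1 \geq L(n+1)$; since $xz \notin X_{n-1}$ gives $(a \cdot xz)_1 < Ln$, the $\delta$-hyperbolic triple inequality $(a \cdot xz)_1 \geq \min\{(a\cdot x)_1, (x \cdot xz)_1\} - \delta$ forces $(x \cdot xz)_1 < Ln + \delta$ (the $(a\cdot x)_1$-branch of the minimum is too large to realize it whenever $L > \delta$). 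Translating by $x^{-1}$ rewrites this as $(1 \cdot z)_{x^{-1}} < Ln + \delta$, and the exact identity $(1 \cdot z)_{x^{-1}} + (x^{-1} \cdot z)_1 = d(1, x^{-1})$ together with $d(1,x) \geq (a \cdot x)_1 \geq L(n+1)$ yields $(x^{-1} \cdot z)_1 > L - \delta$, placing $z$ in the required point-shadow.

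Setting $L := \max\{D,\, K_3,\, \delta + K_2(\epsilon/2)\}$ simultaneously fulfills every condition, with the same $\epsilon$ in (4) and (5) as produced by Lemma \ref{lem.setshadowdecays}. The main obstacle I anticipate is not conceptual but combinatorial: in condition (4) one must carefully track how the Gromov-product hypothesis about $x$ and the one about $xz$ interact, and verify that the triangle-inequality slack does not erode the gap $2L = 2[L(n+3) - L(n+2)]$ gained from the single step. Geometrically, the argument just encodes the standard picture that from a point $x$ already deep in the shadow of $gE$, moving either further into (condition (4)) or out of (condition (5)) the shadow requires traveling in a direction of small $\nu$-measure.
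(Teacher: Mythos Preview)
Your proposal is correct and follows the same overall strategy as the paper: for (4) you establish the inclusion $X_{n+2}\subseteq S_x(gE,\text{const})$ and invoke Lemma \ref{lem.setshadowdecays}, and for (5) the inclusion $\bC(S)\setminus X_{n-1}\subseteq S_x(1,\text{const})$ and invoke Lemma \ref{lem.pointshadowdecays}. The difference is purely in how those inclusions are verified. The paper routes every estimate in (3), (4), (5) through the four-point tree approximation (Lemma \ref{lem.fourpt}), picking up the additive loss $4K_1$ each time and hence setting $L=4K_1+\max\{D,K_2(\epsilon/2),K_3,2\delta\}$. You instead use the exact metric identity $(a\cdot xz)_x=(a\cdot xz)_1-(a\cdot x)_1+(1\cdot xz)_x$ for (4) (which needs no hyperbolicity at all) and the Gromov four-point inequality directly for (5), which yields the sharper choice $L=\max\{D,K_3,\delta+K_2(\epsilon/2)\}$. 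Both approaches are short; yours is a bit more elementary and avoids importing the tree lemma, while the paper's has the advantage of treating (3)--(5) uniformly with one tool. One small remark: since in (5) you take $z\in\partial\mathcal{C}(S)$, the four-point inequality at the boundary may carry a $2\delta$ rather than $\delta$ depending on convention for the extended Gromov product; this is harmless, as it only changes $L$ to $\max\{D,K_3,2\delta+K_2(\epsilon/2)\}$.
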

\begin{proof}
The proof goes in a similar way to \cite[Lemma 2.13]{Mah2012}.
Let $D$ be the diameter of $\mu$.
We use the constants $K_1, \dots, K_3$ from Lemma \ref{lem.fourpt}-\ref{lem.setshadowdecays}.
Let $L:= 4K_1+\max\{D,K_2(\epsilon/2),K_3,2\delta\}$.

\begin{enumerate}
\item[(1)] The Gromov product $(1\cdot a)_1 = 0$ for all $a\in \bC(S)$.
For all $y\in X_0$, there is $e_y\in gE$ such that $(e_y\cdot y)_1\geq L>0$, hence $1\not\in X_0$.

\item[(2)] If $y_i\map y\in\partial G$, then by the property of the Gromov product (see for example \cite[III.H 3.17(5)]{BH}),
$\lim\inf(x\cdot y_i)_1\geq (x\cdot y)_1-2\delta$.
This implies if $y\in X_{n+1}$, 
then for any sequence $y_i\map y$, 
all but finitely many $y_i$'s are in $X_n = S_1(gE,L(n+1))$ since $L>2\delta$.
Thus we have $X_{n+1}\cap (\mathcal{C}(S)\setminus X_n) = \emptyset$.

\item[(3)] Let $a\in X_{n+1}$,
then there exists $e_a\in gE$ such that $a\in S_1(e_a,L(n+2))$.
Let $b\in \mathcal{C}(S)\setminus X_n$, then for all $e\in gE$, we have $b\not\in S_1(e, L(n+1))$.
In particular $b\not\in S_1(e_a,L(n+1))$.
Then we consider a tree $T_1$ from Lemma \ref{lem.fourpt} that connects $\{1,b,a,e_a\}$.
Since $(a\cdot e_a)_1\geq L(n+2)$ and $(b\cdot e_a)_1<L(n+1)$, by (\ref{eq.t-gromov}),
the only possible combinatorial type of $T_1$ is 
the one we get by substituting $(x_1,x_2,x_3,x_4) = (1,b,a,e_a)$ in Figure \ref{fig.tree}.
Then we see that 
\begin{eqnarray*}
d_\mathcal{C}(a,b)&\geq& d_{T_1}(a,b)-K_1\geq d_{T_1}(p,q) - K_1 \\
&\geq& (a\cdot e_a)_1 - (b\cdot e_a)_1 - 4K_1\geq L-4K_1,
\end{eqnarray*}
where $p,q$ are the trivalent vertices as depicted in Figure \ref{fig.tree}.
Thus by the definition of $L$, we have $d_\mathcal{C}(a,b)\geq D$.

\item[(4)]
Let $x\in X_n\setminus X_{n+1}$ and $y\in X_{n+2}$.
Then there exists $e_y\in gE$ such that
$(e_y\cdot y)_1\geq L(n+3)$ and $(x\cdot e_y)<L(n+2)$.
Then, similarly as (3), by Lemma \ref{lem.fourpt}, we see that there is a tree $T_2$ with 
$(x_1,x_2,x_3,x_4) = (1,x,y,e_y)$ in Figure \ref{fig.tree}.
Then we have
\begin{eqnarray*}
(e_y\cdot y)_x&\geq& (e_y\cdot y)^{T_2}_x - K_1\geq d_{T_2}(p,q)-K_1\\
&\geq&(e_y\cdot y)_1-(e_y\cdot x)_1-4K_1\geq L-4K_1.
 \end{eqnarray*}
Hence $S_x(gE,L-4K_1)\supset X_{n+2}$.
This implies that 
$$\nu_x(X_{n+2}) \leq \nu_x(S_x(gE,L-4K_1)) = \nu(S_1(x^{-1}gE,L-4K_1)).$$
Then by Lemma \ref{lem.setshadowdecays},
we have $\nu_x(X_{n+2})\leq \nu(S_1(x^{-1}gE,L-4K_1))<1-\epsilon$ since $L-4K_1\geq K_3$.

\item[(5)]
Since $x\in X_n\setminus X_{n+1}$, there is $e\in gE$ such that $(x\cdot e)_1\geq L(n+1)$.
Let $y\not\in X_{n-1}$, which implies $(y\cdot e)_1<Ln$.
Similarly as (3) and (4), we have a tree $T_3$ for $(x_1, x_2, x_3, x_4) = (1,y,x,e)$ in Figure \ref{fig.tree}.
Then we have $$(1\cdot y)_x\geq (1\cdot y)_x^{T_3}-K_1\geq d_{T_3}(p,q) - 4K_1 \geq L -4K_1.$$
Thus, we see $y\in S_x(1, L-4K_1)$.
Hence we have $$\bC(S)\setminus X_{n-1}\subset S_x(1,L-4K_1).$$
Since we have chosen $L\geq4K_1+K_2(\epsilon/2)$, we see that by Lemma \ref{lem.pointshadowdecays}
$$\nu_x(\bC(S)\setminus X_{n-1})\leq \nu_x(S_x(1,L-4K_1)) = \nu(S_1(x^{-1},L-4K_1))<\epsilon/2.$$
\end{enumerate}
\end{proof}

\begin{proof}[Proof of Lemma \ref{lem.setshadow}]
By Lemma \ref{lem.translen}, we may suppose for some $L'>0$, 
$\tau(\omega_n)\geq L'n$ with exponentially high probability.
This implies that if $\omega_n\in gE$, then $\omega_n$ must be in $X_{\lfloor L'/L\rfloor n-1}$, 
where $X_i:=S_1(gE,L(i+1))$ as in Lemma \ref{lem.key}.
Therefore by Lemma \ref{lem.key}, we have
$$\mathbb{P}(\omega_n\in gE)\leq\mathbb{P}(\omega_n\in X_{\lfloor L'/L\rfloor n-1}) + 
\mathbb{P}(d_\mathcal{C}(\omega_n,\omega_0)<L'n)\leq Kc^n$$
 for some $K>0$ and $c<1$.
\end{proof}

\subsection{Proof of Theorem \ref{thm.notlift}}\label{sec.proof of 1.3}
We are now ready to prove Theorem \ref{thm.notlift}.
The proof goes similarly as the proof of Theorem \ref{thm.primitive}.
First we prepare an alternative of Lemma \ref{lem.align}.
\begin{lem}\label{lem.alignlift}
Fix $D_{0}, M>0$.
Then there is a constant $D_{1}>0$, $c_{1}<1$, $K>0$
such that the following holds.
Consider the collection of indices $a < b < c$
 with the following properties:
\begin{enumerate}
\item $d_\mathcal{C}(\omega_a,\omega_b)\geq D_{1}$,
\item $d_\mathcal{C}(\omega_b,\omega_c)\geq Mn$, and
\item there exists a covering $\pi:S\map S'$ such that $d_\mathcal{C}(\omega_i, \Pi(\mathcal{C}(S')))\leq D_{0}$ for all $i\in\{a,b,c\}$.
\end{enumerate}
Then the probability that this collection of indices is non-empty is at most $Kc_1^n$.
\end{lem}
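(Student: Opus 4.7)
The plan is to couple the shadow estimate (Lemma \ref{lem.setshadow}) with the count of parallel translates (Lemma \ref{lem.finiteparallel}), applied uniformly over a finite list of covering types. Conditions (2) and (3) together say that $\omega_c$ is close to a symmetric locus but is still distant from $\omega_b$, which itself lies close to that locus; this forces $\omega_c$ into a deep shadow of the locus as seen from $\omega_b$, an event of exponentially small probability by Lemma \ref{lem.setshadow}.

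First I would reduce to finitely many coverings. By the Hurwitz bound $|\chi^{\mathrm{orb}}(S')|\geq 1/42$ for hyperbolic $2$-orbifolds of finite type, every (orbifold) covering $\pi:S\to S'$ has degree at most $42|\chi(S)|$; combined with the finiteness of conjugacy classes of subgroups of $\pi_1(S)$ at each bounded index, this yields representatives $\pi_1,\ldots,\pi_m$ with associated sets $E_j:=\Pi_j(\mathcal{C}(S'_j)\cup\partial\mathcal{C}(S'_j))$ such that every $\Pi(\mathcal{C}(S'))$ is of the form $gE_j$ for some $g\in G$ and some $j$. I would then choose $D_1$ to exceed the constants supplied by Lemma \ref{lem.finiteparallel} applied to each $E_j$ with the given $D_0$; under condition (1) the number of translates $gE_j$ lying within $D_0$ of both $\omega_a$ and $\omega_b$ is then at most $mD_2$, a constant depending only on $S$ and $D_0$.

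Second, I would fix indices $a<b<c$ and a candidate target $F=gE_j$ with $d_\mathcal{C}(\omega_b,F)\leq D_0$, $d_\mathcal{C}(\omega_c,F)\leq D_0$, and $d_\mathcal{C}(\omega_b,\omega_c)\geq Mn$. Taking $x\in F$ with $d_\mathcal{C}(x,\omega_c)\leq D_0$, an elementary Gromov-product calculation gives $(x\cdot\omega_c)_{\omega_b}\geq Mn-D_0$, so $\omega_c\in S_{\omega_b}(F,Mn-D_0)$. By the Markov property, conditional on $\omega_b$ the increment $\eta:=\omega_b^{-1}\omega_c$ is distributed as a random walk of $c-b$ steps started from the identity, and the event translates to $\eta\in S_1(\omega_b^{-1}F,Mn-D_0)$. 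Since $\omega_b^{-1}F$ is again a translate of some $E_j$, Lemma \ref{lem.setshadow} yields
\[
\mathbb{P}\bigl(\eta\in S_1(\omega_b^{-1}F,Mn-D_0)\,\big|\,\omega_b\bigr)\leq K'c'^{Mn}
\]
for constants $K'>0$ and $c'<1$ depending only on $\mu$ and the finite list $\{\pi_j\}$ (take the maximum over $j$).

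Finally, summing over the $\leq mD_2$ target sets $F$ for each fixed triple, and then over the $O(n^3)$ choices of $0\leq a<b<c\leq n$, yields total probability at most $O(n^3\cdot mD_2\cdot K' c'^{Mn})\leq Kc_1^n$ for suitable $K>0$ and $c_1<1$. The main obstacle I anticipate is the very first step: the constants $K$ and $c$ in Lemma \ref{lem.setshadow} a priori depend on $\pi$, and only after capping the covering degree via the Hurwitz bound can one extract a uniform maximum over all coverings. Once that reduction is in hand, the remainder is a routine assembly of Lemma \ref{lem.finiteparallel}, a Gromov-product estimate, and Lemma \ref{lem.setshadow} combined with the Markov property.
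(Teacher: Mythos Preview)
Your proposal is correct and follows essentially the same approach as the paper: reduce to finitely many covering types, use Lemma~\ref{lem.finiteparallel} to bound the number of candidate translates $gE$ once $\omega_a,\omega_b$ are fixed, then apply Lemma~\ref{lem.setshadow} (via the Markov property) to show the probability that the walk from $\omega_b$ lands deep in the shadow of any fixed $gE$ decays exponentially, and finally absorb the polynomial union bound over indices. The only cosmetic differences are that the paper sums over the $O(n^2)$ pairs $(a,b)$ rather than $O(n^3)$ triples, and justifies the finiteness of covering types by counting bounded-index subgroups rather than invoking the Hurwitz bound explicitly.
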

\begin{proof}
The number of possible types of orbifolds which may be covered by $S$ is finite.
Furthermore, for each such an orbifold, there are only finitely many possible covering maps up to conjugacy.
This is because the number of subgroups of bounded index in a finitely generated group is finite.
Hence it suffices to fix a covering $\pi:S\map S'$ and consider only its conjugates.
Let $E:= \Pi(\mathcal{C}(S'))$ and $D$ be a constant that Lemma \ref{lem.finiteparallel} works for $\mathcal{P}(x,y,D)$ of any covering from $S$.

Suppose we have indices $a,b$ which satisfy condition (1). 
Then by Lemma \ref{lem.finiteparallel}, the cardinality of $\mathcal{P}(\omega_a,\omega_b,D)$ is universally bounded.
Hence to have a index $c$ which satisfies condition (2) and (3), the random walk that starts from $\omega_b$ must get into
$S_{\omega_b}(gE,Mn)$ for some $gE\in\mathcal{P}(\omega_a,\omega_b,D)$.
Since number of elements in $\mathcal{P}(\omega_a, \omega_b,D)$ is universally bounded,
and the number of possible choices of indices $a,b$ is of order $n^{2}$,
by Lemma \ref{lem.setshadow} we complete the proof.
\end{proof}

\begin{proof}[Proof of Theorem \ref{thm.notlift}]
For the readability of the proof we will not explicitly write the constants.
One can compute constants in a similar way to the proof of Theorem \ref{thm.primitive}.

Suppose $\omega_{n}$ is symmetric.
We may suppose $\omega_{n}$ is pseudo-Anosov.
Since the stable and unstable measured foliations of $\omega_{n}$ are in some $gE$, and $gE$ is quasi-convex,
any geodesic axis of $\omega_{n}$ fellow travels with $gE$.
By Lemma \ref{lem.proximal} and Proposition \ref{prop.fellowtravel},
we see that we can find some indices that satisfies the conditions of Lemma \ref{lem.alignlift} for suitable constants.
\end{proof}

\section{Applications}
\subsection{Cusped random mapping tori are non-arithmetic}\label{sec.arithmetic}
First, we recall the definition of non-compact arithmetic $3$-manifolds, 
see \cite{MR} for more details and properties of arithmetic $3$-manifolds.
Let $d$ be a positive square-free integer and $\mathcal{O}_d$ denote the ring of integers of $\mathbb{Q}(\sqrt{-d})$.
A {\em Bianchi group} is a subgroup of $\mathrm{PSL}(2,\mathbb{C})$ which is of the form 
$\mathrm{PSL}(2,\mathcal{O}_d)$.
One can show that every Bianchi group is a lattice.
The quotient $\mathbb{H}^3/\mathrm{PSL}(2,\mathcal{O}_d)$ is called a {\em Bianchi orbifold}, where
$\mathbb{H}^3$ is the hyperbolic 3-space.
A non-compact hyperbolic $3$-manifold $M = \mathbb{H}^3/\Gamma$ of finite volume is {\em arithmetic}
if a conjugate of $\Gamma$ in $\mathrm{PSL}(2,\mathbb{C})$ is commensurable to some 
Bianchi group $\mathrm{PSL}(2,\mathcal{O}_d)$.
Recall that two subgroups of $\mathrm{PSL}(2,\mathbb{C})$
 are said to be commensurable if their intersection is a finite index subgroup in both.
Let $S$ be an orientable surface of finite type with at least one puncture.
For $\phi\in\mathrm{Mod}(S)$, the mapping torus $M(S,\phi)$ is defined by
$$M(S,\phi) = S\times [0,1]/(x,1)\sim (\phi(x),0).$$
Two mapping tori $M(S,\phi_1)$ and $M(S,\phi_2)$ are said to be {\em cyclic commensurable} if
there exists $k_1,k_2\in\mathbb{Z}\setminus\{0\}$ such that $M(S,\phi_1^{k_1}) = M(S,\phi_2^{k_2})$.
Bowditch-Maclachlan-Reid proved the following theorem.
\begin{thm}[{\cite[Theorem 4.2]{BMR}}]\label{thm.bmr}
Let $S$ be an orientable surface of finite type with at least one puncture.
There are at most finitely many cyclic commensurability classes of arithmetic mapping tori with fiber $S$.
\end{thm}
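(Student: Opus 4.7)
My plan is to combine the classification of commensurability classes of cusped arithmetic hyperbolic 3-manifolds, which are determined by an imaginary quadratic invariant trace field $\mathbb{Q}(\sqrt{-d})$, with a geometric constraint on the cusp cross-section coming from the fibered structure, and finally a volume bound. The theorem then splits naturally into two parts: showing only finitely many discriminants $d$ can occur among arithmetic $M(S,\phi)$, and showing that for each such $d$ only finitely many cyclic commensurability classes arise.

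\emph{Bounding the discriminant.} First, I would fix a puncture $p$ of $S$ and analyze the associated cusp cross-section $T_p$ of $M:=M(S,\phi)$. This is a flat torus with a preferred basis $(\alpha_p,\beta_p)$, where $\alpha_p$ is the loop around $p$ coming from the fiber and $\beta_p$ is the flow direction. If $M$ is arithmetic, then $M$ is commensurable with the Bianchi orbifold $\mathbb{H}^3/\mathrm{PSL}(2,\mathcal{O}_d)$, whose cusps have modulus in $\mathbb{Q}(\sqrt{-d})$; hence the modulus $\tau_p$ of $T_p$ in the basis $(\alpha_p,\beta_p)$ lies in $\mathbb{Q}(\sqrt{-d})$, so $\mathrm{Im}(\tau_p)$ is a positive rational multiple of $\sqrt{d}$. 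I would then invoke Meyerhoff's universal upper bound on the area of a maximal horocusp cross-section, combined with a lower bound on $\ell(\alpha_p)$ depending only on $S$, to constrain $\mathrm{Im}(\tau_p)$ to a compact subinterval of $(0,\infty)$ depending only on $S$. This would force $d$ into a finite set $\mathcal{D}(S)$.

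\emph{Finiteness for fixed $d$.} For each $d\in\mathcal{D}(S)$, I would argue that $M(S,\phi)$ shares a finite common cover with $\mathbb{H}^3/\mathrm{PSL}(2,\mathcal{O}_d)$, and use the cusp area and length bounds of the previous step to bound the degree of this cover, and hence $\mathrm{vol}(M(S,\phi))$, in terms of $S$ and $d$. Wang's finiteness theorem (only finitely many cusped hyperbolic 3-manifolds have volume below a given bound) then gives finitely many isometry classes of arithmetic $M(S,\phi)$, and therefore finitely many 3-manifold commensurability classes. Finally, two cyclic-commensurable mapping tori admit a common cyclic cover and so determine the same 3-manifold commensurability class; this upgrades the finiteness to the cyclic commensurability statement.

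\emph{Main obstacle.} The delicate ingredient will be the universal lower bound on $\ell(\alpha_p)$: a priori a complicated $\phi$ could force $\alpha_p$ to become very short in the hyperbolic structure on $M(S,\phi)$, pushing $\mathrm{Im}(\tau_p)$ and hence $d$ out of any fixed bound. To rule this out I would use that $\alpha_p$ is parabolic in the fiber surface and appeal to the local structure of the cusp in a fibered hyperbolic 3-manifold: by Thurston's geometrization together with the Margulis lemma applied at the cusp, the geometry near $C_p$ should be controlled by the combinatorics at $p$ rather than by the global complexity of $\phi$. Verifying this $\phi$-uniform cusp thickness statement is the heart of the argument.
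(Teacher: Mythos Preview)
First, note that this theorem is not proved in the paper at all: it is quoted as Theorem~4.2 of Bowditch--Maclachlan--Reid \cite{BMR} and invoked as a black box in the proof of Theorem~\ref{thm.arithmetic}. There is no ``paper's own proof'' to compare against here.

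As a standalone sketch your outline has the right shape, but it contains two genuine gaps.

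\emph{The unflagged gap.} Knowing only that $\tau_p\in\mathbb{Q}(\sqrt{-d})$ and that $\mathrm{Im}(\tau_p)$ lies in a fixed compact interval does \emph{not} bound $d$. Writing $\tau_p=a+b\sqrt{-d}$ with $a,b\in\mathbb{Q}$ gives $\mathrm{Im}(\tau_p)=|b|\sqrt d$, and nothing you have said prevents $b$ from being an arbitrarily small rational. What is missing is an \emph{integrality} statement: after conjugating the cusp to $\infty$, the peripheral subgroup of the arithmetic Kleinian group sits (up to bounded index) inside $\mathcal{O}_d$, so the cusp shape in an appropriate basis is an algebraic integer, not merely an element of the field. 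Only with that input does a bound on the cusp shape translate into a bound on $d$; this is precisely the arithmetic core of the argument in \cite{BMR}, and your outline omits it.

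\emph{The flagged gap.} Your hoped-for lower bound on $\ell(\alpha_p)$ depending only on $S$ does not follow from the Margulis lemma: that lemma constrains the geometry of the thin part but says nothing about which peripheral slope happens to bound the fiber. One should in fact expect no $\phi$-independent bound, since composing $\phi$ with large twisting near the puncture can drive the cusp shape to extremes. What rescues the situation in \cite{BMR} is again the arithmetic integrality: once the peripheral lattice is commensurable with $\mathcal{O}_d$, a very short $\alpha_p$ would be a very short nonzero vector of $\mathcal{O}_d$. So your two difficulties are really one, and both are repaired by the arithmetic input you left out.

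Two smaller issues. Wang's theorem is for closed manifolds; for the cusped case you want the J{\o}rgensen--Thurston theory (together with the fact that the number of cusps of $M(S,\phi)$ is bounded by the number of punctures of $S$). And your last sentence has the implication backwards: cyclic commensurability is \emph{finer} than $3$-manifold commensurability, so finiteness of the latter does not by itself give finiteness of the former. To finish you need that a fixed hyperbolic $3$-manifold admits only finitely many fibrations with fiber homeomorphic to $S$, which follows from the Thurston norm.
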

Now we are in a position to prove Theorem \ref{thm.arithmetic}.
\begin{proof}[Proof of Theorem \ref{thm.arithmetic}]
Note that if two mapping classes give rise to cyclic commensurable mapping tori, then they are fibered commensurable.
By Theorem \ref{thm.minimal}, 
it suffices to discuss minimal mapping classes in their commensurability classes.
The uniqueness of the minimal element (Theorem \ref{thm.uniqueminimal}) implies that
two minimal mapping classes give rise to cyclic commensurable mapping tori if and only if they are conjugate.
Hence there are at most finitely many conjugacy classes of minimal elements that give arithmetic mapping tori
by Theorem \ref{thm.bmr}.
Hence there is an upper bound of the translation length for minimal mapping classes to have arithmetic mapping tori.
Then Lemma \ref{lem.translen} applies to complete the proof.
\end{proof}

\begin{rmk}
For $S$ closed,
one can prove similar statement as Theorem \ref{thm.bmr} with upper bound for the degree of the invariant trace fields, 
see \cite[Corollary 4.4.]{BMR}.
For $S$ closed, we do not know if the set of  a random mapping classes with arithmetic mapping tori is exponentially small or not.

\end{rmk}
\subsection{Closed random mapping tori are asymmetric}\label{sec.asymmetric}
It is well known that the isometry group of any closed hyperbolic 3-manifold is finite.
A closed hyperbolic 3-manifold is called {\em asymmetric} if the isometric group is trivial.
As a corollary of Theorem \ref{thm.minimal} and the work of Bachman-Schleimer \cite{BS}, 
we have the following.
\begin{thm}
Let $\mu$ be a probability measure on $G$ which satisfies Condition \ref{condi.munotlift}.
Then the set of mapping classes with asymmetric mapping tori is exponentially large with respect to $\mu$.
\end{thm}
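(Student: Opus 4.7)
The plan is to combine Theorem \ref{thm.minimal} with the work of Bachman--Schleimer \cite{BS} on uniqueness of fibrations in pseudo-Anosov mapping tori whose monodromy has large translation length on the curve graph. Their result supplies a constant $N(g)$, depending only on the genus of $S$, such that whenever a pseudo-Anosov $\phi$ satisfies $\tau(\phi) \geq N(g)$, the only fibration of the closed hyperbolic $3$-manifold $M(S,\phi)$ over $S^1$ is, up to isotopy, the given one. By \cite{Mah2012} combined with Lemma \ref{lem.translen} and Theorem \ref{thm.minimal}, I may assume that $\omega_n$ is simultaneously pseudo-Anosov, minimal in its fibered commensurability class, and of translation length at least $Ln \geq N(g)$ on $\mathcal{C}(S)$, all with exponentially high probability.

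Given a non-trivial isometry $f$ of $M(S,\omega_n)$, uniqueness of the fibration together with Mostow rigidity lets me take $f$ to be fiber-preserving, inducing a self-diffeomorphism $\bar f$ of the base $S^1$. If $\bar f$ is a non-trivial rotation of order $k > 1$, then passing to the quotient $M(S,\omega_n)/\langle f\rangle$ equipped with its induced fibration exhibits a mapping class $\psi$ with $\psi^k = \omega_n$, contradicting primitivity. If $\bar f$ is the identity, then $f$ restricts to a mapping class $\sigma\in\mathrm{Mod}(S)$ commuting with $\omega_n$. Since the centralizer of a pseudo-Anosov in $\mathrm{Mod}(S)$ is virtually cyclic, $\sigma$ is either a power of $\omega_n$, in which case $f$ is isotopic to the identity and hence trivial by Mostow rigidity, or else $\sigma$ has finite order; in the latter case $\omega_n$ descends to a mapping class on the quotient orbifold $S/\langle\sigma\rangle$, exhibiting $\omega_n$ as symmetric and contradicting Theorem \ref{thm.minimal}.

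The main obstacle I foresee is the remaining case, where $\bar f$ reverses the orientation of $S^1$. This would yield an involution $\sigma$ of $S$ with $\sigma \omega_n \sigma^{-1} = \omega_n^{-1}$, a relation that is not literally captured by the definition of symmetric in Condition \ref{condi.munotlift}. The cleanest way to dispose of this case is to exploit the fact that the fibration of $M(S,\omega_n)$ is canonically co-oriented by the pseudo-Anosov suspension flow, so that the Bachman--Schleimer appeal rules out orientation-reversing symmetries directly; alternatively, one can observe that both $\sigma$ and $\sigma\omega_n$ are involutions, expressing $\omega_n$ as a product of two finite-order mapping classes, and then show via a shadow-decay argument analogous to Section \ref{sec.notlift} that the locus of such $\omega_n$ is exponentially small. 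Combining the three cases, the exponentially large intersection of ``pseudo-Anosov'', ``large translation length on $\mathcal{C}(S)$'', and ``minimal'' is contained in the set of mapping classes with asymmetric mapping tori.
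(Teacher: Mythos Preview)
Your overall architecture matches the paper's: invoke Bachman--Schleimer to make any isometry fiber-preserving once $\tau(\omega_n)$ is large, then split according to the induced map $\bar f$ on $S^1$. Your handling of the cases $\bar f=\mathrm{id}$ and $\bar f$ a nontrivial rotation is fine and amounts to exactly what the paper does in one line via Theorem~\ref{thm.minimal} (the paper phrases both together as ``the quotient by $\langle h\rangle$ is a $2$-orbifold bundle over the circle'', which is precisely non-minimality).

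The genuine gap is the orientation-reversing case. Your option (a) does not work: Bachman--Schleimer forces an isometry to preserve the fibration, but it says nothing about the induced co-orientation on $S^1$. An isometry of the form $(x,t)\mapsto(\beta x,1-t)$ with $\beta\omega_n\beta=\omega_n^{-1}$ is perfectly compatible with uniqueness of the (unoriented) fibration, so this case cannot be dismissed by appeal to \cite{BS}. Your option (b) is also off-target: the relation $\beta\omega_n\beta^{-1}=\omega_n^{-1}$ is not a symmetry in the sense of \S\ref{sec.notlift} (there is no covering of $S$ in sight), so the shadow-decay machinery for $\Pi(\mathcal{C}(S'))$ does not apply.

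The paper instead handles this case with the tools of \S\ref{sec.primitive}. Since $\beta$ swaps the two points of $\mathrm{Fix}(\omega_n)\subset\PMF(S)$, it acts on $\mathcal{C}(S)$ coarsely as a reflection of the axis of $\omega_n$. After conjugating $\beta$ by a suitable power $\omega_n^k$, the coarse fixed point of $\beta$ lies on the segment $\gamma_2$ of the decomposition in Proposition~\ref{prop.fellowtravel}. One then obtains two long subsegments of $[\omega_0,\omega_n]$ that are \emph{anti}-aligned via $h=\beta$, and Remark~\ref{rmk.anti-align} (the anti-aligned version of Lemma~\ref{lem.align}) shows this happens with exponentially small probability, by the same argument as in the proof of Theorem~\ref{thm.primitive}. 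So the missing ingredient is Remark~\ref{rmk.anti-align}, not anything from \S\ref{sec.notlift}.
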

\begin{proof}
By Lemma \ref{lem.translen}, the translation length $\tau(\omega_{n})$ grows linearly with $n$ with exponentially high probability.
By the work of Bachman-Schleimer \cite[Theorem 3.1]{BS}, 
we see that if the translation distance of $\tau(\phi)$ of $\phi$ is greater than $-\chi(S)$
then any isometry of $M(S,\phi)$ must maps each fiber to a fiber.
Theorem \ref{thm.minimal} implies that the probability that $M(S,\omega_{n})$ has isometry $h$ which maps each fiber to a fiber 
and the quotient by $\langle h\rangle$ is a $2$-orbifold bundle over the circle decays exponentially.
The only case remained is when $M(S,\omega_{n})$ admits an isometry of type $(x,t)\mapsto(\beta x,1-t)$ for some involution $\beta:S\rightarrow S$, in other words when $M(S,\omega_{n})$ admits a quotient which is a $2$-orbifold bundle over the $1$-orbifold $S^{1}/\mathbb{Z}_{2}$.
Note that we may suppose $\omega_{n}$ is pseudo-Anosov.
In this case we have $\beta\omega_{n}\beta = \omega_{n}^{-1}$ and especially $\beta$ permutes the elements in $\text{Fix}(\omega_{n})\subset\PMF(S)$.
Hence around geodesic axes of $\omega_{n}$ in the curve complex $\mathcal{C}(S)$, $\beta$ coarsely acts as a reflection.
Then by taking conjugate by $\omega_{n}^{k}$ for some $k\in\mathbb{Z}$ if necessary, we may suppose coarse fixed points of $\beta$ are on the $\gamma_{2}$ of the decomposition of $[\omega_{0},\omega_{n}]$ from Proposition \ref{prop.fellowtravel}.
Then with Remark \ref{rmk.anti-align}, we have desired conclusion by a similar argument to the proof of Theorem \ref{thm.primitive}.
\end{proof}

\section*{Acknowledgments}
The author would like to thank Ingrid Irmer, Joseph Maher, Makoto Sakuma and Giulio Tiozzo for helpful conversations. 
He would especially like to thank Joseph Maher for suggesting to use the work \cite{CM} to prove Theorem \ref{thm.primitive}.
He started this work when he was in  ICERM, Brown University.
Thanks also goes to  ICERM and JSPS for supporting the visit.
He would also like to thank Brian Bowditch for bringing the paper \cite{BMR} in his attention.
An earlier version of this paper contained a gap in the proof of Theorem \ref{thm.notlift}.
The author would like to thank a referee for pointing out the gap.
This work was partially supported by JSPS Research Fellowship for Young Scientists.

\end{document}